\newcolumntype{M}[1]{>{\centering\arraybackslash \large}m{#1}}
\newcolumntype{N}[1]{>{\centering\arraybackslash \Large}m{#1}}
\newtheorem{theorem}{Theorem}[section]
\newtheorem{prop}[theorem]{Proposition}
\newtheorem{coro}[theorem]{Corollary}
\newtheorem{claim}[theorem]{Claim}
\newtheorem{lemma}[theorem]{Lemma}
\newtheorem{defi}[theorem]{Definition}
\renewenvironment{proof}{ \noindent \emph{\textbf{Proof:}}}{\hfill$\square$\\}
\newcommand{\RR}{\mathbb{R}}
\newcommand{\NN}{\mathbb{N}}
\newcommand{\CC}{\mathbb{C}}
\newcommand{\TT}{\mathbb{T}}
\newcommand{\Bc}{\mathcal{B}}
\newcommand{\Cc}{\mathcal{C}}
\newcommand{\Dc}{\mathcal{D}}
\newcommand{\Ec}{\mathcal{E}}
\newcommand{\Hc}{\mathcal{H}}
\newcommand{\Lc}{\mathcal{L}}
\newcommand{\Oc}{\mathcal{O}}
\newcommand{\Qc}{\mathcal{Q}}
\newcommand{\Tc}{\mathcal{T}}
\newcommand{\ddd}{\partial}
\newcommand{\ra}{\rangle}
\newcommand{\la}{\langle}
\newcommand{\grad}{\nabla}
\newcommand{\Dnu}[1]{\frac{\partial #1}{\partial \nu}}
\renewcommand{\div}{\operatorname{div}}
\newcommand{\no}{n$^{\text{o}}$}
\newcommand{\dd}{\,{\text{\rm d}}}
\newcommand{\ii}{{\text{\rm i}}}
\newcommand{\ff}{{\text{\rm f}}}
\newcommand{\id}{{\text{\rm id}}}
\newcommand{\pc}{ \usefont{T1}{cmtl}{m}{n} \selectfont}
\newcommand{\Path}{{\text{\rm Path}}}
\newcommand{\Diff}{{\text{\rm Diff}}}
\newdimen\texpscorrection
\newdimen\figcenter
\def\figurewithtex #1 #2 #3 #4 #5\cr{\null
  {\goodbreak\figcenter=\hsize\relax
  \advance\figcenter by -#4truecm
  \divide\figcenter by 2
  \begin{figure}[hbt]
  \vskip #3truecm\noindent\hskip\figcenter
  \includegraphics{#1}{\hskip\texpscorrection\input #2 }
  \vskip 0.8truecm{\baselineskip=0.8\baselineskip
  \noindent \vbox{\noindent {\footnotesize #5}}\par}
  \end{figure}}}
\def\point#1 #2 #3 {\rlap{\kern #1 truecm
\raise #2 truecm \hbox{#3}}}
\newcounter{compte}
\newenvironment{enum2}{\begin{list}{(\roman{compte})} {\usecounter{compte}
\topsep=1mm \itemsep=0.2mm \leftmargin=10mm  } }
{\end{list}}
\newenvironment{itemize2}{\begin{list}{$\bullet$} {\topsep=1mm \itemsep=0.2mm
\leftmargin=5mm  } }
{\end{list}}
\numberwithin{equation}{section}
\begin{document}

\title{{\bf Control of the Schr\"odinger equation by slow deformations of the domain}}

\author{Alessandro \textsc{Duca}\footnote{Université de Lorraine, CNRS, INRIA, IECL, F-54000 Nancy, France email: {\pc 
alessandro.duca@inria.fr}} ,  Romain 
\textsc{Joly}\footnote{Universit\'e Grenoble Alpes, CNRS, 
Institut Fourier, F-38000 Grenoble, France email: {\pc 
romain.joly@univ-grenoble-alpes.fr}} {~\&~}
Dmitry \textsc{Turaev}
\footnote{Imperial College, London SW7 2AZ, UK, email: {\pc d.turaev@imperial.ac.uk }}}
\date{}

\maketitle


\begin{abstract} 
The aim of this work is to study the controllability of the Schr\"odinger equation
\begin{equation}\label{eq_abstract} 
i\partial_t u(t)=-\Delta u(t)~~~~~\text{ on }\Omega(t)  \tag{$\ast$}
\end{equation}
with Dirichlet boundary conditions, where $\Omega(t)\subset\RR^N$ is a time-varying domain. We prove the global approximate controllability of \eqref{eq_abstract} in $L^2(\Omega)$, via an adiabatic deformation $\Omega(t)\subset\RR^N$ ($t\in[0,T]$) such that $\Omega(0)=\Omega(T)=\Omega$. This control is strongly based on the Hamiltonian structure of \eqref{eq_abstract} provided by \cite{SEmoving}, which enables the use of adiabatic motions. We also discuss several explicit interesting controls that we perform in the specific framework of rectangular domains. 

\vspace{3mm}

\noindent {\bf Keywords:}~Schr\"odinger equation, PDEs on moving domains, global approximate controllability, adiabatic control, Fermi acceleration.
\end{abstract}


\section{Introduction}\label{section_intro}

We consider a quantum state confined in a time-varying domain $\{\Omega(t)\}_{t\in I}$ with $I=(0,T)$. Its dynamics is modeled by the following Schr\"odinger equation
\begin{equation}\label{SE}
\left\{\begin{array}{ll} 
i\partial_t u=-\Delta u,~~~~~~&(x,t)\in\Omega(t)\times I,\\
u_{|\partial\Omega(t)}= 0, & (x,t)\in\partial\Omega(t)\times I.
\end{array}\right.
\end{equation}
The aim of this work is to study the controllability of the Schr\"odinger equation \eqref{SE} by considering the time-varying domain $\Omega(t)$ as a control. To be able to consider shapes as rectangular domains, we allow $\Omega(t)$ to admit some corners or edges but no degenerate features as cusps. Let us denote by ``$\Cc^2-$curved polyhedron'' the image of a (non-degenerate) polyhedron via a $\Cc^2-$diffeomorphism. Our main result is as follows.
\begin{theorem}\label{th_main}
Let $d\geq 2$ and $\Omega_0\subset\RR^d$ be a connected open bounded set with $\Cc^2$ boundaries or a $\Cc^2-$curved polyhedron. Let $u_0$ and $u_1$ in $L^2(\Omega_0)$ with $\|u_0\|_{L^2}=\|u_1\|_{L^2}$. For any $\varepsilon>0$, there exist $T>0$ and a smooth family of domains $(\Omega(t))_{t\in [0,T]}$ such that $$\Omega(0)=\Omega(T)=\Omega_0$$ and such that the solution of \eqref{SE} with initial data $u(t=0)=u_0$ satisfies $$\|u(t=T)-u_1\|_{L^2}\leq \varepsilon.$$  
\end{theorem}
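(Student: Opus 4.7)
The plan is to combine a spectral truncation argument with the Hamiltonian/adiabatic formulation of \cite{SEmoving} and with explicit passages near spectral degeneracies, in order to approximately realize any prescribed unitary on a finite-dimensional subspace of eigenfunctions.

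First I would fix $\varepsilon>0$ and expand $u_0$ and $u_1$ in the orthonormal basis $(\phi_n)_{n\geq 1}$ of $-\Delta$ with Dirichlet boundary condition on $\Omega_0$. Truncating at some large $N=N(\varepsilon)$ and using $\|u_0\|_{L^2}=\|u_1\|_{L^2}$, the problem reduces, up to an error of order $\varepsilon/3$, to exhibiting a smooth closed deformation $(\Omega(t))_{t\in[0,T]}$ whose propagator $U_T$ satisfies $\|P_N(U_T u_0-u_1)\|_{L^2}\leq \varepsilon/3$, where $P_N$ is the orthogonal projector onto $\mathrm{span}(\phi_1,\ldots,\phi_N)$. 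Since \eqref{SE} is unitary on $L^2$, it is natural to look for a deformation realizing an arbitrary element of $U(N)$ on this subspace (with only a small perturbation outside).

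Next I would invoke the quantum adiabatic theorem. For a loop $(\Omega(t))_{t\in[0,T]}$ avoiding crossings among $\lambda_1,\ldots,\lambda_N$, the time reparametrization $t\mapsto t/\tau$ with $\tau\to\infty$ together with the Hamiltonian formulation of \cite{SEmoving} shows that the associated propagator converges on $\mathrm{span}(\phi_1,\ldots,\phi_N)$ to a diagonal unitary whose entries are products of dynamical phases $\exp\bigl(-\ii\int_0^T \lambda_n(s)\dd s\bigr)$ and geometric (Berry) phases attached to the loop. By varying the shape of the loop and the rescaled duration $\tau$, one generates a dense subgroup of the diagonal torus of $U(N)$.

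The crux is to produce nontrivial off-diagonal mixing. Following the strategy developed by Boscain, Chambrion and Sigalotti in the bilinear quantum control context, I would seek one-parameter families of admissible domains along which two consecutive eigenvalues $\lambda_n$ and $\lambda_{n+1}$ collide at a conical intersection, or come very close at an avoided crossing. An adiabatic path that slowly passes across or encircles such a point realizes, in the slow limit, a prescribed nontrivial $2\times 2$ unitary on $\mathrm{span}(\phi_n,\phi_{n+1})$, while leaving the other eigenspaces approximately invariant. Concatenating these mixing blocks (for different indices $n$) with the diagonal phases from the previous step generates a dense subgroup of $U(N)$, hence an approximation of the target unitary to within $\varepsilon/3$.

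The hard part will be twofold. First, one must construct explicit admissible families of domains -- including rectangles, $\Cc^2$ domains and $\Cc^2$-curved polyhedra -- in which such spectral degeneracies occur for every pair of consecutive eigenvalues; this is a delicate shape-perturbation problem, typically handled by starting from a configuration with geometric symmetry (where multiplicities are forced) and then breaking the symmetry in a controlled way. Second, the adiabatic theorem must be justified in the framework of \cite{SEmoving} with merely piecewise smooth boundaries, which requires careful regularity of the spectral projectors under domain variations. Once these ingredients are available, the density of the group generated by the diagonal tori and the $2\times 2$ off-diagonal mixings inside $U(N)$ is a standard Lie-algebraic fact, and concatenating the corresponding loops at sufficiently slow speed produces the deformation claimed in the theorem.
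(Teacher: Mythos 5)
Your proposal follows a different architecture from the paper's, and as written it has several genuine gaps. The paper does not attempt to realize an arbitrary element of $U(N)$ on the truncated subspace. After a preliminary reduction (Section 4.1), the only target it needs is to drive the single state $A\varphi_1$ to $\sum_{j\leq N}b_j\varphi_j$ for prescribed coefficients. It achieves this with no Lie-algebraic closure argument at all, via a dumbbell-shaped domain whose thin channel forces the ``left'' ground level to undergo a sequence of near-crossings with the ``right'' eigenvalues as the left ball is slowly contracted. At each near-crossing the amount of energy transferred is made arbitrary by an intermediate-value argument on the deformation speed (Lemma 4.3, built on the singular-domain convergence of Theorem 3.8 and the continuity estimates of Section 2.3): a motion adiabatic for the split spectrum follows the labels through the crossing, a motion adiabatic for the connected spectrum follows the ordering past the avoided crossing, and interpolating the speed between these two regimes realizes any intermediate transfer fraction $\alpha\in[0,1]$. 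Phases are then fixed by parking the domain at a shape whose spectrum is rationally independent and letting the free flow run (Lemma 4.4).

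Three of your steps would not go through as stated. First, the Dirichlet Laplacian is a real operator, so along a contractible loop in shape space that avoids degeneracies the eigenfunctions can be chosen real and the Berry phase is $\pm1$; geometric phases therefore cannot densely generate the diagonal torus. Density of the diagonal part must instead come from dynamical phases $\exp(-\ii\int\lambda_n)$, which requires a rational-independence genericity result (the paper uses Theorem 3.5) that your sketch does not invoke. Second, ``slowly passing across or encircling'' a conical intersection does not automatically yield a prescribed $2\times2$ unitary: encircling gives a fixed monodromy, and passing through requires precise Landau--Zener non-adiabatic asymptotics; to obtain an arbitrary splitting parameter one really does need the intermediate-value argument between the two distinct adiabatic regimes, which is the paper's Lemma 4.3 and is not a corollary of the conical-intersection monodromy picture. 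Third, ``dense subgroup of $U(N)$'' from two-level mixings is not a one-line Lie-algebraic fact in this setting: the mixings you construct are constrained (near-identity except on pairs of consecutive levels, with phase and amplitude coupled to the dynamical phase accumulated along the way), and one must actually exhibit a generating set. The paper sidesteps all of this by the one-orbit reduction. Your intuition that the paper breaks a forced symmetry to create the crossings is correct in spirit; the actual mechanism used is the disconnection of the dumbbell (not a discrete symmetry), which trivially produces exact crossings whose $\Cc^0$-persistence as near-crossings is then obtained from the Arendt--Daners singular convergence theorem.
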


Notice that the result of Theorem \ref{th_main} should stay true for more general domains, as soon as the properties of the Dirichlet Laplacian operator in $\Omega_0$ are not too exotic. However, we stick to the above formulation, as it is sufficient for the examples we consider in this paper.

We recall that \eqref{SE} models the evolution of a quantum particle of $\RR^d$ confined by infinite potential walls, for example generated by electric potentials. The above result shows that one can control the quantum state of the particle by changing the shape of the domain enclosed by these walls. 
We emphasize that our process follows a quasi-adiabatic motion and the energy of the particle changes uniformly slowly on the control interval. It provides a new method for driving the system from the ground state to an excited state (or a superposition of excited states), and vice versa, in a soft way, without instantaneous energy changes and without using resonant interactions. The control protocol provided by our proof is ready-to-use in many situations. In the most simple cases, the deformations of the domain are either explicit or based on generic motions, which could be chosen ``randomly''. The main non-explicit parameter is the deformation speed which can be calibrated tentatively in actual/numerical experiments (moving slowly enough in the adiabatic parts or finding a suitable intermediate speed in the non-adiabatic parts).

\vspace{5mm}

\needspace{5mm}
\noindent{\bf \underline{Well-}p\underline{osed unitar}y\underline{ flow for the Schr\"odin}g\underline{er e}q\underline{uation in a movin}g\underline{ domain}}\\[1mm]
The peculiarity of the equation \eqref{SE} is that the phase space $L^2(\Omega(t),\CC)$ depends on time. The existence and uniqueness of solutions for this type of problems was recently studied in \cite{SEmoving}. There, it was shown how to formalize the definition of solutions for the Schr\"odinger equation in time-varying domains by only assuming that the deformation is sufficiently smooth. More precisely, we consider a bounded reference domain $\Omega_0\subset\RR^d$ and a specific family of unitary transformations $h^\sharp(t):L^2(\Omega(t),\CC)\rightarrow L^2(\Omega_0,\CC)$ with inverse $h_\sharp(t)$ such that equation \eqref{SE} is the following equation in $L^2(\Omega_0,\CC)$:
\begin{equation}\label{SE_fixed_intro}
i\partial_t v= h^\sharp(t)H(t)h_\sharp(t) v,~~~~~~(x,t)\in\Omega_0 \times I,
\end{equation}
where the Hamiltonian $H(t)$ is the magnetic Laplacian operator 
$$H(t)=-(\div_x+iA)\circ(\grad_x+iA)-|A|^2$$
with some explicit magnetic potential $A$ depending on the deformation of the domain $\Omega(t)$. More details are recalled in Section \ref{section_moving}.

The new formulation \eqref{SE_fixed_intro} provides a natural framework for the study of the evolution of the Schr\"odinger equation \eqref{SE} and for ensuring the existence and uniqueness of solutions. The Hamiltonian structure of \eqref{SE_fixed_intro} plays a central role in our work as it allows us to use different features of Hamiltonian dynamics, such as the conservation of the $L^2$-norm or the adiabaticity of motion, see Section \ref{section_moving}.

\vspace{5mm}

\needspace{5mm}
\noindent{\bf \underline{Control of the }q\underline{uantum s}y\underline{stem b}y\underline{ deformation of the domain} }\\[1mm]
Our strategy for control is based on specific quasi-adiabatic deformations $(\Omega(t))_{t\in [0,T]}$ of the initial domain $\Omega_0$. Recall that a deformation of $\Omega$ is adiabatic when, for any initial
state with a definite energy, the motion is sufficiently slow so that the system during its evolution stays close to the state defined by the same quantum numbers. It is a well-known fact (the so-called ``avoided level crossing theorem'') that for a typical adiabatic deformation of the domain, if $u_0$ is the ground state in the domain $\Omega(0)$, then the solution $u(t)$ of \eqref{SE} remains close to the ground state of $\Omega(t)$. See Section \ref{sec_adiabatic} for a more precise statement. 
However, we prove Theorem \ref{th_main}  by using a special type of deformations $(\Omega(t))_{t\in [0,T]}$ which drive the system close to energy level crossings and, thus, allow for an adiabatic transition from the ground state to excited states. In our control protocol, the speed of the domain deformation is uniformly slow; we just slightly adjust the speed at the moments near the level crossings in order to distribute the energy between the modes. A typical example is as follows.
\begin{enumerate}[leftmargin=15pt]
\item Start with $u_0$ being the ground state of $\Omega_0$. First, we adiabatically deform $\Omega_0$ into a dumbbell shaped domain $\Omega$: from a smooth part of the boundary of $\Omega_0$, we slowly grow an attached ball $\Omega^L$ linked by a thin channel to the other part, $\Omega^R$, which stays close to the initial shape $\Omega_0$, see Figure \ref{fig-domain}. We do the deformation sufficiently slowly to be adiabatic, so the state $u(t)$, eventually, gets close to the ground state of the dumbbell shaped domain, which, if the channel is sufficiently thin and  the attached ball $\Omega^L$ is sufficiently large, is mostly localized in $\Omega^L$.  
\item At the second step, we adiabatically contract the ball $\Omega^L$. The modes mostly supported by $\Omega^L$ increase their energy during the deformation, while the ones that are mostly localized in $\Omega^R$ stay unaffected. This provides the ``almost crossings'' of the eigenvalues: at certain moments of time, we have two states, one localized mostly in $\Omega^L$ and the other in $\Omega^R$, with sufficiently close energies. From the physical point of view, this allows for a tunneling effect. If we adapt suitably the velocity of the deformation around these critical times, then we can control how much energy is transferred from the modes in $\Omega^L$ to the modes in $\Omega^R$. The main difficulties of the proof of Theorem \ref{th_main} consist in controlling this tunneling effect.
\item Once the desired state has been obtained in $\Omega^R$, we adiabatically deform the domain back to its initial shape $\Omega_0$ by preserving the simplicity of the spectrum. This final phase preserves the distribution of energies obtained at the previous step.
\end{enumerate}
The detailed arguments are provided in Section \ref{section_proof}.

\begin{figure}[ht]
\begin{center}
\resizebox{0.5\textwidth}{!}{
\setlength{\unitlength}{10mm}
\begin{picture}(12,6)
\put(0,0){\includegraphics[width=12cm]{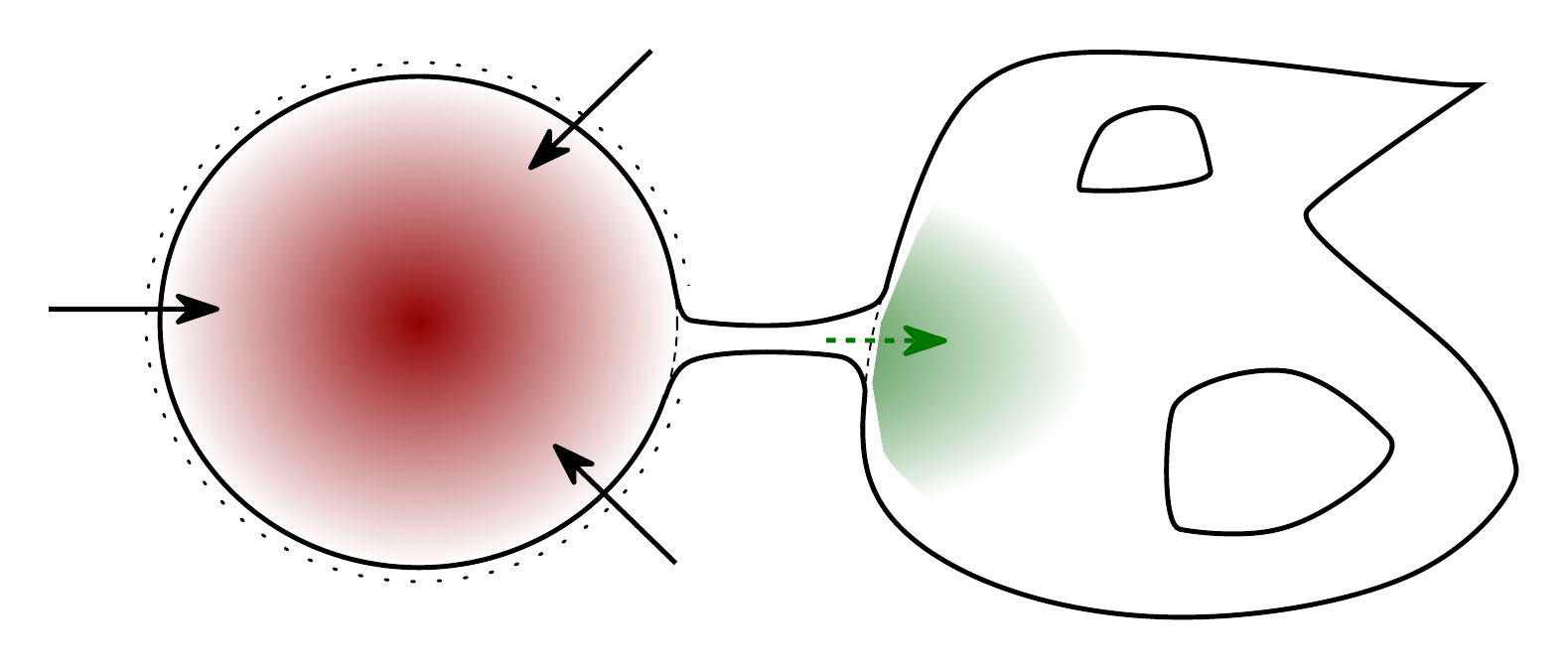}}
\put(0.5,3.8){\resizebox{0.8cm}{!}{$\Omega^L$}}
\put(11,3.3){\resizebox{0.8cm}{!}{$\Omega^R$}}
\put(5.7,3){\resizebox{0.8cm}{!}{$\omega^\eta$}}
\end{picture}}
\end{center}
\caption{\it The key idea is to use a dumbbell shaped domain as pictured here: a ball $\Omega^L$ is linked to a domain $\Omega^R$, close to the original reference domain $\Omega_0$, via a very thin channel $\omega^\eta$. At start, almost all the energy is contained in the left ball $\Omega^L$. When we reduce the size of the ball $\Omega^L$, the energy flows to the right part $\Omega^R$, by the tunneling effect. The technical issue is to control this transfer of energy to create the target state in $\Omega^R$.}\label{fig-domain} 
\end{figure}

\vspace{5mm}

\needspace{5mm}
\noindent{\bf \underline{Ex}p\underline{licit controls on rectan}g\underline{ular domains}}\\[1mm]
In Section \ref{rectangle}, we study a different type of domain deformations for the specific case where $\Omega_0$ is
a rectangle. In the rectangular domain of the size $a\times b$, the spectrum of the Laplacian operator is completely known: the eigenmode $\phi_{j,k}=\sin(j\pi/a x_1)\sin(k\pi/b x_2)$ has the total energy 
$\lambda_{j,k}=\pi^2 (j^2/a^2 + k^2/b)$. Any adiabatic variation of the sizes $a$ and $b$ of the rectangle preserves both the horizontal and vertical quantum numbers $j$ and $k$, hence the position of $\lambda_{j,k}$ in the spectrum linearly ordered by the increase of the energy can be easily switched by a slow change in $a$ or $b$. For example,
for any $k_2>k_1$ and $j_2>j_1$, when $a$ grows from very small to very large values while keeping $b$ constant,
we have $\lambda_{j_2,k_1} > \lambda_{j_1,k_2}$ at the beginning of the process and $\lambda_{j_2,k_1} < \lambda_{j_1,k_2}$ at the end. 

We exploit this explicit eigenvalue crossings at rectangular deformations and obtain another strategy for the global approximate controllability, as described in the proof of Proposition \ref{control_bilinear_rec}. Starting with any state, we move it adiabatically to a vicinity of a decoupled state (a function of $x$ times a function of $y$). After that, we 
stop doing an adiabatic control and, instead, move the decoupled state to the ground state of the rectangle (see Proposition \ref{control_bilinear_rec}) by a higher-dimensional version of the technique that was developed in \cite{Beauchard} for the control of the Schr\"odinger equation in one-dimensional domains.

\vspace{5mm}

\needspace{5mm}
\noindent{\bf \underline{Adiabatic }p\underline{ermutations of ei}g\underline{enstates}}\\[1mm]
Finally, in Section \ref{section_pumping}, we discuss how simple and explicitly defined deformations permute the excited states of a particle in a rectangle. With a simple and purely adiabatic periodic motion of one side of a rectangle, we create a non-trivial permutation of the energy eigenstates. The trick is that, for a part of each perturbation cycle, we keep the rectangular shape of the domain, and for the rest of the cycle, we make the domain shape ``generic''. This means
that, when the boundary motion is slow enough, the process is adiabatic, hence it preserves
2 quantum numbers ($j$ and $k$) in the first part of the cycle, leading to eigenvalue crossings, while in the rest of the cycle no crossing occurs but the quantum numbers $j$ and $k$ are no longer defined. Altogether this means that, at the end of the cycle, the system can find itself at an energy eigenstate with a different pair of quantum numbers.
We provide heuristic arguments and numerical evidence which suggest that iteration of such permutation of eigenstates leads, typically, to an exponential Fermi acceleration. 

The same effect should be observed for adiabatic perturbations of general domains which are periodically transformed to
a dumbbell shape and back. For the part of the perturbation cycle when the domain has a dumbbell shape, the system has an additional (approximate) quantum number, which indicates whether the eigenfunction is supported mostly on the left or right part of the domain. For the part of the cycle when the domain has a generic form, this quantum number is destroyed. Similarly to the case of the rectangle, such process can lead to a non-trivial permutation of eigenstates
and to the exponential energy growth, see \cite{Dmitry}.

In general, the eigenstate permutations due to the cyclic adiabatic processes described here
(when different sets of quantum numbers are preserved on different parts of the cycle) provide an interesting 
class of number-theoretical games. The analysis of the dynamics of such permutation should be different from
the famous Collatz problem \cite{Collatzproblem}, as our permutations are automatically bijections $\NN\to\NN$, but
could be similarly difficult. In addition to dumbbell shapes and rectangles, one can use integrable domains
(ellipses and rings) and domains with discrete symmetries in order to create additional quantum numbers for a part of
the adiabatic cycle. Another possibility is to consider a pair of quantum-mechanical oscillators cyclically perturbed in such a way that they interact only for a part of the cycle. In all such processes, physical intuition suggests that the eigenstates permutations which they generate are well approximated by a positively biased geometric Brownian motion,
see Section \ref{section_pumping}. Providing a rigorous proof for such claim is a challenging number-theoretical problem,
and the results can be applicable beyond quantum mechanics, for example for the wave equation and Maxwell equation in
moving domains.

\vspace{5mm}

\needspace{5mm}
\noindent{\bf \underline{Previous works}}\\[1mm]
The origin of our article comes from the work \cite{Dmitry} where the idea was introduced that the adiabatic separation of the domain into non-symmetric parts with a consecutive reconnection of the parts can create eigenvalue crossings in an unavoidable way, leading to a non-trivial permutation of the eigenstates.  
Before being able to obtain the results of the present paper, we implemented the adiabatic separation/reconnection technique on a simple one-dimensional model where the control is provided by a moving potential  \cite{one-D}. This was the first step to understand how to completely and rigorously obtain the global approximate controllability with these techniques. Then, in \cite{SEmoving}, the first two authors introduced the framework of the Cauchy problem related to Schr\"odinger equation in moving domains. Thus, it is now possible to implement the original ideas for Equation \eqref{SE}.

Notice that the use of eigenvalue crossings to construct controls has been recently proposed also in \cite{ugo1,ugo2}.
In \cite{CT1,CT2}, the authors consider very slow motions to construct a control, these motions being ``quasi-static'' because they follow curves of steady states. Even if the PDEs considered in \cite{CT1,CT2} are not Hamiltonian, this type of control is in the same spirit as our ``quasi-adiabatic'' motions.

The control of PDEs by deformation of the domain is a difficult task and there are very few results in this direction. In \cite{BMT}, the authors study an adiabatic deformation of the domain $\Omega(t)$ in \eqref{SE} in dimension $d=1$ and for a specific case of deformation. The articles \cite{Beauchard,BC,Beauchard_Teismann,Rouchon} also consider the case $d=1$. They investigate the exact controllability problem, but only in neighborhood of some specific solutions (for comparison: our Theorem \ref{th_main} is a global result, but it does not yield an exact control since we allow a small error $\varepsilon>0$). Finally, in \cite{Moyano} the strategy of \cite{Beauchard,BC,Beauchard_Teismann,Rouchon} is followed for the higher space dimension. However, due to an assumption of radial symmetry, the techniques of \cite{Moyano} remain mostly one-dimensional.

\vspace{1cm}

\noindent{\bf Acknowledgements:} The first two authors have been supported by the project {\it ISDEEC} of the French agency {\it ANR}, project number ANR-16-CE40-0013. The third author was supported by RScF grant 19-71-10048 in the HSE - Nizhny Novgorod.

\section{The moving domains}\label{section_moving}

The Schr\"odinger equation in domains depending on time was studied in several articles, see \cite{BMT,Beauchard,Beauchard_Teismann, Knobloch_Krechetnikov, Makowski_Peplowski,Moyano, Pinder}, but often in the case of simple deformations. A general theory was developed in \cite{SEmoving}. In this section, we recall the basic tools introduced in this work and also give some new estimates. 

\subsection{The basic setting}
The first step adopted in order to deal with moving domains consists in pulling back the equation in a fixed domain $\Omega_0$. As it is classical, we use a family of $\Cc^k$-diffeomorphisms $h(t,\cdot)$ such that $h(t,\Omega_0)=\Omega(t)$ for every $t$ in\ time interval $I$ (see for example \cite{Henry,Murat_Simon} for an introduction on the subject). We need to introduce a topology associated to these deformations via diffeomorphisms. To this purpose, it is more convenient to extend $h:\Omega_0\rightarrow \Omega(t)$ into a diffeomorphism from $\Bc$ to $\Bc$ where $\Bc\subset\RR^d$ is a large closed ball containing all the domains we are interested with. 
\begin{defi}\label{defi_diffeo}
Let $\Bc\subset\RR^d$ be a large closed ball. We set 
$$\|f\|_{\Cc^k(\Bc)}=\max \big(\|f\|_{L^\infty(\Bc)},\ldots,\|D^kf\|_{L^\infty(\Bc)}\big)$$ 
to be the classical $\Cc^k-$norm. 
We denote by $\Diff^k(\Bc)$ the set of the $\Cc^k-$diffeomorphisms $h$ on $\Bc$ such that $h\equiv \id$ on $\partial\Bc$. We endow it with the $\Cc^k-$topology, considering 
$\Diff^k(\Bc)$ as a submanifold of $\Cc^k(\Bc,\Bc)$.
\end{defi}
We recall that if $h\in\Diff^k(\Bc)$, then any $g\in\Cc^k(\Bc,\Bc)$ that satisfies $g\equiv \id$ on $\partial\Bc$ and which is close enough to $h$ for the $\Cc^k-$norm, also belongs to $\Diff^k(\Bc)$. This is the reason why $\Diff^k(\Bc)$ is a submanifold of $\Cc^k(\Bc,\Bc)$ and it can be locally endowed with its topology. Now, we introduce the space of paths of diffeomorphisms in the same way.
\begin{defi}\label{defi_path}
If $I$ is a time interval, we introduce the space $\Path^k(I,\Bc)$ of \emph{$\Cc^k-$paths of diffeomorphisms} $h\in\Cc^k(I\times\Bc,\Bc)$ with $h(t)\in\Diff^k(\Bc)$ for all $t\in I$. We consider it as a submanifold of $\Cc^k(I\times\Bc,\Bc)$ and we endow it with the inherited topology.
\end{defi}

In this paper, we will always consider the following framework: $\Bc\subset\RR^d$ is a large closed ball and  $\Omega_0\subset\Bc$ is a reference domain, regular enough to be able to define a Dirichlet Laplacian operator with the classical properties that we may be interested in. Typically, we can consider a smooth reference domain or a polyhedral reference domain, but even a Lipschitz domain should be sufficient to ensure several of our statements. We consider the moving domain $\Omega(t)$ with $t\in I$ as the images $\Omega(t)=h(t,\Omega_0)$, where $h\in\Path^k(I,\Bc)$ for some $k\geq 1$. To preserve the Hamiltonian structure of the Schr\"odinger equation, it is natural to introduce a unitary version of the pull-back operator $h^*:\phi\mapsto \phi\circ h$ by considering $h^\sharp(t)$ defined by
\begin{equation}\label{pullback_sharp}
h^\sharp(t)~:~\phi\in L^2(\Omega(t),\CC)~\longmapsto~\sqrt{|J(t,\cdot)|}\,(\phi\circ h)(t) \in L^2(\Omega_0,\CC)~,
\end{equation}
where $J(t,x):=Dh(t,x)$ is the Jacobian of $h$ and $|J|$ (or $|Dh(t,x)|$) denotes the 
absolute value of its determinant. We also introduce its inverse $h_\sharp(t)$ with $t\in I$, the push-forward 
operator 
\begin{equation}\label{pushforward_sharp}
h_\sharp(t)=(h^\sharp(t))^{-1}~:~\psi\in L^2(\Omega_0,\CC) \longmapsto \big(\psi / \sqrt{|J(\cdot,t)|}\big)\circ 
h^{-1}\in L^2(\Omega(t),\CC)~.
\end{equation}
In the current work, we adopt the notation from \cite{SEmoving}. We 
denote by $x$ the points in $\Omega(t)$ and by $y$ the ones in $\Omega_0$. The notation $\la\cdot|\cdot\ra$ 
denotes the scalar product 
in $\CC^N$ with the convention 
$$\la v|w\ra=\sum_{k=1}^d\overline{v_k} \,w_k~, \ \ \ \ \ \  \ \ \ \forall v,w\in \CC^d.$$
We set $v(t)=h^\sharp(t)u(t)$ and we pull back Equation \eqref{SE} in the fixed domain $\Omega_0$. The 
straightforward computation yields an equation for which the Hamiltonian structure is not obvious at first sight. 
This structure was made more explicit in \cite{SEmoving}, by proving that the equation satisfied by $v$ is the 
following
\begin{equation}\label{SEv}
\left\{\begin{array}{ll}
i\partial_t v(t,y)=-h^\sharp\Big[\big(\div_x+iA_h\big)\circ\big(\grad_x+iA_h\big)+|A_h|^2\Big]h_\sharp 
v(t,y),~~~&(y,t)\in\Omega_0\times I,\\
v_{|\partial\Omega_0}= 0, & (y,t)\in\partial\Omega_0\times I,
\end{array}\right.
\end{equation}
where the magnetic potential $A_h$ is given by $A_h(t,x)=-\frac 12 (h_*\partial_t h)(t,x):=-\frac 12(\partial_t h(t,h^{-1}(t,x)))$. Using the equation above, we may define a flow for the Schr\"odinger equation in the moving domain $\Omega(t)$. The following result is proved in \cite{SEmoving}. 

\begin{theorem}[{\bf Theorem 1.1 of \cite{SEmoving}}] \label{th_Cauchy}
Let $\Bc\subset\RR^d$ be a large ball and let $\Omega_0\subset\Bc$ be a reference domain, either a domain of class $\Cc^2$ or a polyhedron. Let $I$ be a time interval and let $h\in \Path^2(I,\Bc)$. We set $\Omega(t)=h(t,\Omega_0)$. 

Then, Equation \eqref{SEv} generates a unitary flow $\tilde U(t,s)$ on $L^2(\Omega_0)$ and we may define weak solutions of the Schr\"odinger equation \eqref{SE} by transporting this flow via $h_\sharp$ to a unitary flow $U(t,s):L^2(\Omega(s))\rightarrow L^2(\Omega(t))$.

Assume in addition that the path of diffeomorphisms $h$ belongs to $\Path^3(I,\Bc)$. Then, for any $u_0\in H^2(\Omega(t_0))\cap H^1_0(\Omega(t_0))$ with $t_0\in I$, the flow above defines a solution $u(t)=U(t,t_0)u_0$ in $\Cc^0(I,H^2(\Omega(t))\cap H^1_0(\Omega(t)))\cap \Cc^1(I,L^2(\Omega(t)))$ solving \eqref{SE} in the $L^2-$sense.
\end{theorem}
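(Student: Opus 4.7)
The plan is to work on the fixed Hilbert space $L^2(\Omega_0)$, construct a unitary propagator for the pulled-back equation \eqref{SEv}, and then transport it to the moving domain. Throughout, write the right-hand side of \eqref{SEv} as $\tilde H(t)v$, so the equation reads $i\partial_t v = \tilde H(t)v$ with $\tilde H(t) := h^\sharp(t)H(t)h_\sharp(t)$ and $H(t) = -(\div_x+iA_h)\circ(\grad_x+iA_h) - |A_h|^2$.

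\textbf{Step 1: frozen-time operator.} For each fixed $t\in I$, $H(t)$ is a magnetic Dirichlet Laplacian on $\Omega(t)$ plus a bounded zeroth-order term; the assumption $h\in\Path^2$ makes $A_h=-\tfrac12(h_*\partial_t h)$ Lipschitz in $x$ and continuous in $t$. By the Friedrichs construction $H(t)$ is self-adjoint with form domain $H^1_0(\Omega(t))$ and operator domain $H^2(\Omega(t))\cap H^1_0(\Omega(t))$: in the $\Cc^2$ case by standard elliptic regularity, in the curved-polyhedron case by the Grisvard--Dauge corner regularity for Dirichlet problems on non-degenerate polyhedra. Since $h(t,\cdot)\in\Diff^2(\Bc)$, the unitary $h^\sharp(t)$ is an isomorphism of $H^k$ for $k=0,1,2$ and preserves the Dirichlet condition, so $\tilde H(t)$ is self-adjoint on $L^2(\Omega_0)$ with the \emph{$t$-independent} operator domain $D:=H^2(\Omega_0)\cap H^1_0(\Omega_0)$.

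\textbf{Step 2: construction of the propagator.} Expanding $\tilde H(t)$ in the fixed coordinates of $\Omega_0$ produces a second-order elliptic differential operator whose principal symbol depends on $(Dh(t,\cdot))^{-1}$ and whose lower-order coefficients depend on $h,Dh,D^2h$ and $\partial_t h$. The hypothesis $h\in\Path^2$ then makes the map $t\mapsto \tilde H(t)$ strongly continuous from $I$ into the bounded operators $D\to L^2(\Omega_0)$, with a uniform resolvent bound $\|(\tilde H(t)+i\lambda)^{-1}\|\le C$ for $\lambda$ large on any compact subinterval; equivalently, $\{i\tilde H(t)\}_{t\in I}$ is a stable family of skew-adjoint generators with common domain $D$. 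Invoking Kato's theorem for linear evolution equations (Reed--Simon II, Thm.~X.70) I obtain a strongly continuous two-parameter unitary propagator $\tilde U(t,s)$ on $L^2(\Omega_0)$ with $\tilde U(s,s)=\id$ and $i\partial_t\tilde U(t,s)=\tilde H(t)\tilde U(t,s)$; unitarity of each $\tilde U(t,s)$ is forced by the self-adjointness of the generators. Setting $U(t,s):=h_\sharp(t)\tilde U(t,s)h^\sharp(s)$ then yields a unitary two-parameter family from $L^2(\Omega(s))$ onto $L^2(\Omega(t))$ by composition of unitary maps.

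\textbf{Step 3: improved regularity and main obstacle.} Under the stronger assumption $h\in\Path^3$, every coefficient of $\tilde H(t)$ gains one degree of continuous differentiability in $t$, so $t\mapsto\tilde H(t)$ is $\Cc^1$ as a map from $I$ into bounded operators $D\to L^2(\Omega_0)$. The regularity half of Kato's theorem then gives, for $u_0\in H^2(\Omega(t_0))\cap H^1_0(\Omega(t_0))$, that $\tilde v(t):=\tilde U(t,t_0)h^\sharp(t_0)u_0$ lies in $\Cc^0(I,D)\cap \Cc^1(I,L^2(\Omega_0))$ and solves \eqref{SEv} strongly; pushing forward by $h_\sharp(t)$, which maps $D$ onto $H^2(\Omega(t))\cap H^1_0(\Omega(t))$ continuously in $t$ since $h\in\Cc^3$, and undoing the magnetic reformulation, yields the stated regularity of $u(t)=U(t,t_0)u_0$. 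The delicate point I anticipate is Step 1 in the polyhedral setting: justifying that the Dirichlet magnetic Laplacian on a curved polyhedron has operator domain exactly $H^2\cap H^1_0$, and that this property is stable under conjugation by $h^\sharp(t)$, requires non-trivial corner/edge theory and some care to ensure the $\Cc^2$-diffeomorphism $h(t,\cdot)$ does not introduce extra singularities at vertices. A secondary difficulty in Step 2 is that with only $\Path^2$ regularity the map $t\mapsto\tilde H(t)$ is merely continuous and not differentiable, so $\tilde U(t,s)$ must be obtained through the low-regularity version of Kato's theorem, or equivalently via mollification of $h$ with a limiting argument based on the stability bound and the strong continuity on $D$.
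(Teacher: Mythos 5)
This theorem is not proved in the paper: it is quoted verbatim as Theorem~1.1 of \cite{SEmoving} and used as a black box, so there is no in-paper proof against which your attempt can be compared. What can be said is whether your sketch is a plausible reconstruction of the argument one would expect in \cite{SEmoving}, and whether its gaps are merely notational or genuine.

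Your overall architecture (fixed-domain reduction, frozen-time self-adjointness with a common domain $D=H^2(\Omega_0)\cap H^1_0(\Omega_0)$, Kato's propagator theorem, conjugation by $h_\sharp$) is the standard and very likely the intended route, and the two-tier regularity $\Path^2$ vs.\ $\Path^3$ does match the structure of Kato-type theorems (a unitary propagator under weaker hypotheses, strong solutions in $D$ under stronger ones). But two of the gaps you flag are not cosmetic. First, the version of Kato you cite (Reed--Simon~II, Thm.~X.70) requires $t\mapsto \tilde H(t)\varphi$ to be continuously differentiable for $\varphi \in D$; with only $h\in\Path^2$ the lower-order coefficients of $\tilde H(t)$ involve $D^2_x h$ and $\partial_t h\circ h^{-1}$, so one only gets continuity in $t$, and invoking X.70 at this stage is not justified. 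Either you must use a genuinely different propagator theorem (Kato's 1970 evolution-equation theorem for stable families with a common domain and merely strongly continuous dependence), or you must carry out the mollification/limiting argument you allude to, and this requires more than ``the stability bound and strong continuity on $D$'': you need to show that the mollified propagators converge strongly, which rests on a quantitative comparison estimate in the spirit of Proposition~\ref{prop_difference} of this paper, established at the level of the pulled-back operators. Second, and more serious, your Step~1 asserts that the operator domain is exactly $H^2\cap H^1_0$ ``in the curved-polyhedron case by the Grisvard--Dauge corner regularity.'' For a Dirichlet Laplacian on a \emph{non-convex} polyhedron (re-entrant edges or vertices) this is false: the solution of $-\Delta u = f$ with $f\in L^2$ generically fails to be in $H^2$, and the domain $D(-\Delta)$ is a proper extension of $H^2\cap H^1_0$. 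Restricting to convex polyhedra, or reinterpreting $D$ as the abstract domain $D(-\Delta_{\Omega_0})$ and avoiding $H^2$ altogether (which is what the term ``weak solution'' in the $\Path^2$ part of the statement suggests), are the two ways out; as written, your Step~1 cannot be salvaged by Grisvard--Dauge alone. These are exactly the points one would expect \cite{SEmoving} to have dealt with carefully, and they are where your reconstruction is weakest.
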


A similar result for Neumann-type boundary conditions or for more general linear Schr\"odinger equations was also obtained in \cite{SEmoving}. The Gauge invariance, additional phase shifts or a suitable version of Moser's trick may be used to simplify \eqref{SEv} in particular situations, as presented in \cite{SEmoving}. 

Theorem \ref{th_Cauchy} shows that a relevant notion of solution of Schr\"odinger equation in $\Omega(t)$ can be obtained for $\Cc^2-$paths of domains $\Omega(t)$ and that this notion corresponds to the natural strong one in the path of domains is of class $\Cc^3$. Notice that the $\Cc^k-$smoothness does not refer to the reference domain $\Omega(0)=\Omega(t_0)$, which may have corners.

Also notice that defining $h$ outside $\Omega_0$ and equal to the identity on $\partial\Bc$ is not too much constraining. If we start from a family of diffeomorphisms $(h(t,\cdot))_{t\in I}\in\Cc^k(I\times \overline\Omega_0,\RR^d)$, then it may be impossible to embedded it in $\Path^k(\Bc)$ for some ball $\Bc$ due to topological reasons. If for instance we consider $\Omega_0\subset\RR^2$ as an annulus and $h$ reverses it inside out, then we cannot extend $h$ into a diffeomorphism of a ball. However, we may consider $\Omega(t_0)$ as a new reference domain and $\tilde h(t)=h(t)\circ h(t_0)^{-1}$ as another family of diffeomorphisms. For all the simple and smooth examples discussed in this paper, using Whitney extension theorem \cite{Whitney} and standard results of globalization of local diffeomorphisms (see \cite{Meisters_Olech} and also \cite{SEmoving}), we can then extend $\tilde h$ from $\Omega(t_0)$ into some large ball $\Bc$ in order to embed it in $\Diff^k(\Bc)$.

\subsection{Adiabatic motions}\label{sec_adiabatic}
The aim of this section is to present the adiabatic result for the Schr\"odinger equation \eqref{SEv} on moving domains under suitable assumptions on the deformation. To this purpose, we refer to \cite[Section 1 \& Section 5.1]{SEmoving}, where a very similar result is presented and proved.  

We consider a family of domains $\{\Omega(\tau)\}_{\tau\in [0,1]}$ with the framework of Theorem \ref{th_Cauchy}. We denote by $P(\tau)\in\Lc(L^2(\Omega(\tau)))$ with $\tau\in [0,1]$ a family of spectral projectors associated to the Dirichlet Laplacian operator $-\Delta$ on $\Omega(\tau)$. The classical adiabatic principle occurs when the deformation of the family of domains is sufficiently slow. We represent the slowness of the motion by a parameter $\epsilon>0$ and we consider deformations between the times $0$ and $1/\epsilon$, that is the following Schr\"odinger equation
\begin{equation}\label{SE_adiabatic}
\left\{
\begin{array}{ll}i\ddd_t u_\epsilon(t, x)=-\Delta u_\epsilon(t, x),\ \ \ \ &t \in [0,1/\epsilon]~,~~x\in 
\Omega(\epsilon t),\\
u_{\epsilon}(t) \equiv 0,&\text{ on }\partial\Omega(\epsilon t), \\
u_\epsilon(t=0)= u_0 \in L^2(\Omega(0)). &
\end{array}\right.
\end{equation}
The classical adiabatic principle say that, if we move very slowly, then the energy contained in a level of energy is almost preserved. Many versions of adiabatic results exist, see for example \cite{Nen}. In \cite{SEmoving}, we have checked the adapatation of this argument to the framework of moving domains. 
\begin{prop}\label{prop_adiabatic}
{\bf (Corollary 1.5 of \cite{SEmoving}).}
Let $N>0$. Consider a family of domains $\{\Omega(\tau)\}_{\tau\in [0,1]}$ such that, for all $\tau\in [0,1]$, the first $N$ eigenvalues $(\lambda_j(\tau))_{j=1\ldots N}$ of the Dirichlet Laplacian operator on $\Omega(\tau)$ are simple. Denote by $(\varphi_j)_{j=1\ldots N}$ and $(\psi_j)_{j=1\ldots N}$ some corresponding orthonormal eigenfunctions for $\tau=0$ and $\tau=1$ respectively. Then, the solution of \eqref{SE_adiabatic} with 
$$u_0=\sum_{j=1}^N c_j\varphi_j$$
satisfies
$$u_\varepsilon(1/\epsilon)=\sum_{j=1}^N \tilde c_j\psi_j+R~~\text{ with }~~|\tilde c_j|\xrightarrow[~~\epsilon\longrightarrow 0~~]{} |c_j|~\text{ and }~\|R\|_{L^2}\xrightarrow[~~\epsilon\longrightarrow 0~~]{}0~.$$
\end{prop}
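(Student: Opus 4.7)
The plan is to reduce Proposition \ref{prop_adiabatic} to the classical Kato--Nenciu adiabatic theorem on the fixed Hilbert space $L^2(\Omega_0)$, where $\Omega_0$ is a reference domain and $h\in\Path^2([0,1],\Bc)$ is a path of diffeomorphisms realizing the family $\{\Omega(\tau)\}_{\tau\in[0,1]}$ via $\Omega(\tau)=h(\tau,\Omega_0)$. For the rescaled path $h_\epsilon(t,\cdot):=h(\epsilon t,\cdot)$, defined on $[0,1/\epsilon]$, I set $v_\epsilon(t):=h_\epsilon^\sharp(t)u_\epsilon(t)$. Because $\partial_t h_\epsilon=\epsilon\,(\partial_s h)(\epsilon t,\cdot)$, the magnetic potential $A_{h_\epsilon}$ appearing in \eqref{SEv} is itself of order $\epsilon$. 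Passing to the slow time $s=\epsilon t\in[0,1]$, one obtains an equation of the form
\[
i\epsilon\,\partial_s v_\epsilon(s) \;=\; \tilde H_\epsilon(s)\,v_\epsilon(s), \qquad \tilde H_\epsilon(s)=\tilde H_0(s)+\epsilon\,\tilde H_1(s),
\]
where $\tilde H_0(s):=h^\sharp(s)(-\Delta_{\Omega(s)})h_\sharp(s)$ is a self-adjoint operator on $L^2(\Omega_0)$ unitarily equivalent to the Dirichlet Laplacian on $\Omega(s)$ (and hence with the same spectrum), while $\tilde H_1(s)$ is a symmetric first-order differential operator whose coefficients depend smoothly on $s$ through $h$ and $\partial_s h$.

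The simplicity of the first $N$ eigenvalues $\lambda_j(s)$ on the compact interval $[0,1]$ yields a uniform spectral gap, and together with the smoothness of $s\mapsto h(s)$ this produces $\Cc^1$ families of rank-one spectral projectors $\tilde P_j(s)$ of $\tilde H_0(s)$, satisfying $\tilde P_j(s)=h^\sharp(s)P_j(s)h_\sharp(s)$. Since $\tilde H_1(s)$ is relatively $\tilde H_0(s)$-bounded with arbitrarily small relative bound, for $\epsilon$ small enough the perturbed operator $\tilde H_\epsilon(s)$ retains $N$ simple eigenvalues at distance $O(\epsilon)$ from the $\lambda_j(s)$, with spectral projectors $\tilde P_j^\epsilon(s)$ themselves at an $O(\epsilon)$ distance from $\tilde P_j(s)$. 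The classical adiabatic theorem, in the form recalled in \cite{Nen} and adapted to moving-domain Schr\"odinger equations in \cite[Sections~1 and~5.1]{SEmoving}, then applies to the flow of $\tilde H_\epsilon$: the projectors $\tilde P_j^\epsilon(s)$ are approximately transported by the propagator, so that
\[
v_\epsilon(1/\epsilon) \;=\; \sum_{j=1}^N c_j\,e^{i\theta_j(\epsilon)}\, h^\sharp(1)\psi_j \;+\; o(1) \quad\text{in }L^2(\Omega_0)\text{ as }\epsilon\to 0,
\]
for suitable dynamical-plus-geometric phases $\theta_j(\epsilon)\in\RR$. Pushing forward by the unitary $h_\sharp(1)$ and setting $\tilde c_j:=c_j\,e^{i\theta_j(\epsilon)}$ yields $u_\epsilon(1/\epsilon)=\sum_{j=1}^N \tilde c_j\psi_j+R$ with $|\tilde c_j|\to|c_j|$ and $\|R\|_{L^2}\to 0$.

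The most delicate point, and the one I would expect to be the main obstacle, is that $\tilde H_1(s)$ is unbounded on $L^2(\Omega_0)$: checking that the spectral-gap structure of $\tilde H_0$ really persists under the perturbation $\epsilon\tilde H_1$, and that the standard adiabatic machinery (which typically requires $\Cc^2$ dependence in a suitable strong sense) extends to the operator family $\tilde H_\epsilon(s)$ with its moving form domain, requires the careful analysis that is precisely the content of \cite[Sections~1 and~5.1]{SEmoving}. The present proposition is then essentially a spectral-projection reformulation of the adiabatic result proved there, and the proof is a direct transcription.
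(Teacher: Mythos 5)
The statement you are trying to prove is not actually proved in this paper: Proposition~\ref{prop_adiabatic} is quoted verbatim as Corollary~1.5 of \cite{SEmoving}, and the text explicitly says ``In \cite{SEmoving}, we have checked the adaptation of this argument to the framework of moving domains.'' So the paper's ``proof'' is a citation, and you are in effect reconstructing the argument in the cited reference rather than comparing against an in-paper proof.

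Your reconstruction is consistent with the framework set up in Section~\ref{section_moving} and with what one expects in \cite{SEmoving}: pulling back by the rescaled path $h_\epsilon(t)=h(\epsilon t)$ to the fixed Hilbert space $L^2(\Omega_0)$, reading off that $A_{h_\epsilon}=O(\epsilon)$, rewriting the equation in slow time as $i\epsilon\partial_s v=\bigl(\tilde H_0(s)+\epsilon\tilde H_1(s)\bigr)v$ with $\tilde H_0(s)=h^\sharp(s)(-\Delta)h_\sharp(s)$ unitarily equivalent to the Dirichlet Laplacian on $\Omega(s)$, and then invoking the gap condition furnished by the simplicity of the first $N$ eigenvalues on the compact interval to apply a Kato--Nenciu adiabatic theorem. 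You correctly flag the genuine obstacle---$\tilde H_1(s)$ is an unbounded (first-order) operator with time-varying coefficients, so one must show it is $\tilde H_0$-bounded with small relative bound and that the adiabatic machinery tolerates such an $O(\epsilon)$ perturbation and the moving form domain---and you defer precisely that step to \cite{SEmoving}, which is exactly what the paper itself does. In short: your approach is the expected one, and modulo the deferred step your sketch and the paper's treatment coincide (both are ultimately appeals to Corollary~1.5 of \cite{SEmoving}); you have simply made the reduction explicit.
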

Notice that it is also possible to extend the above result to some cases with multiple eigenvalues, see \cite[Remarks 3-4 \ p.\ 16]{Nen}. We will need this extension in the case of rectangular shape. This case is simple enough to be computed explicitly and,  in the present paper, we will restrict the extension to such case (see the statement of Proposition \ref{prop_adiabatic_square} below).

\subsection{Continuity estimates}
We need to estimate the continuity of the solutions of \eqref{SEv} with 
respect to the different deformations of the domain. By Theorem \ref{th_Cauchy}, we know that \eqref{SEv} generates a unitary semigroup and the $L^2-$norm of $v(t)$ is constant. Since we do not want to involve Poincar\'e estimates that depend on the domain, we consider
$$\|v\|_{H^1(\Omega)}=\|\grad v\|_{L^2(\Omega)} + \|v\|_{L^2(\Omega)}~,$$
even if the term $\|\grad v\|_{L^2(\Omega)}$ is sufficient to define an equivalent norm in $H^1_0(\Omega)$. Notice that if $\Omega\subset \Bc$ and $v\in H^1_0(\Omega)$, then the extension of $v$ by zero belongs to $H^1_0(\Bc)$ and has the same $H^1-$ and $L^2-$norms. We first bound the growth of the $H^1-$norm during the dynamics.
\begin{prop}\label{prop_H1bound}
Let $I$ be a time interval and $t_0\in I$. For all $R>0$, there exists $C>0$ such that the following holds. Let $h\in\Path^3(I,\Bc)$ be a family of diffeomorphisms such that 
$\|h\|_{\Cc^3(I\times\Bc,\Bc)}+\|h^{-1}\|_{\Cc^3(I\times\Bc,\Bc)}\leq R$.   Let $\Omega_0\subset\Bc$ be any reference domain. Let $v$ be the corresponding solution of \eqref{SEv} with an initial data $v(t_0)=v_0\in H^1_0(\Omega_0)$, as given by Theorem \eqref{th_Cauchy}. There holds 
$$\forall t\in 
I~,~~\|v(t)\|_{H^1(\Omega_0)} \leq C e^{C|t-t_0|}\|v(t_0)\|_{H^1(\Omega_0)}~.$$
\end{prop}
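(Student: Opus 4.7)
\emph{Proof plan:} The idea is to run an energy estimate directly on the pulled-back equation \eqref{SEv}, exploiting its self-adjoint structure. Since by Theorem \ref{th_Cauchy} the flow is unitary on $L^2(\Omega_0)$, the $L^2$ norm of $v(t)$ is preserved, so only the gradient part needs to be controlled. Write \eqref{SEv} as $i\partial_t v = \Hc(t) v$ with $\Hc(t) := h^\sharp(t) H_{\text{mag}}(t) h_\sharp(t)$, where $H_{\text{mag}}(t) = -(\div_x + iA_h)(\grad_x + iA_h) - |A_h|^2$ is the shifted magnetic Laplacian on $\Omega(t)$. Since $h^\sharp(t): L^2(\Omega(t)) \to L^2(\Omega_0)$ is unitary and $H_{\text{mag}}(t)$ is self-adjoint on $L^2(\Omega(t))$ with form domain $H^1_0(\Omega(t))$, the operator $\Hc(t)$ is self-adjoint on $L^2(\Omega_0)$ with form domain $H^1_0(\Omega_0)$. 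I would first establish the estimate for $v_0 \in H^2(\Omega_0)\cap H^1_0(\Omega_0)$, using the strong solution provided by Theorem \ref{th_Cauchy}, and then recover the general case by density.

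The next step is to pick $K=K(R)>0$ large enough and to define the modified energy
$$\tilde E(t) := \la \Hc(t) v(t), v(t)\ra_{L^2(\Omega_0)} + K \|v(t)\|^2_{L^2(\Omega_0)}.$$
By the change of variable $x=h(t,y)$ and the definitions of $h^\sharp, h_\sharp$, one has $\la \Hc(t) v, v\ra = \int_{\Omega(t)}|(\grad_x + iA_h)(h_\sharp v)|^2 dx - \int_{\Omega(t)}|A_h|^2|h_\sharp v|^2 dx$. Using the uniform bounds on $J=Dh$, $J^{-1}$ and $A_h$ coming from $\|h\|_{\Cc^2}, \|h^{-1}\|_{\Cc^2}\leq R$, and Young's inequality to absorb the magnetic cross terms, one gets the equivalence
$$c_1(R) \|v\|^2_{H^1(\Omega_0)} \leq \tilde E(t) \leq c_2(R) \|v\|^2_{H^1(\Omega_0)}, \quad t\in I,$$
provided $K$ is chosen large enough. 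To differentiate $\tilde E$, use the self-adjointness of $\Hc(t)$ together with $i\partial_t v = \Hc(t) v$:
$$\frac{d}{dt} \la \Hc(t) v, v\ra = \la \dot\Hc(t) v, v\ra + 2\Re \la \Hc(t) v, -i\Hc(t) v\ra = \la \dot\Hc(t) v, v\ra,$$
because $\la \Hc v, -i\Hc v\ra = i\|\Hc v\|^2$ is purely imaginary. Combined with conservation of $\|v\|_{L^2}$, this yields $\frac{d}{dt}\tilde E(t) = \la \dot\Hc(t) v(t), v(t)\ra$.

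The core of the argument is the estimate $|\la \dot\Hc(t) v, v\ra| \leq C(R) \|v\|^2_{H^1(\Omega_0)}$. Writing $\Hc(t)$ in local coordinates on $\Omega_0$ in divergence form,
$$\Hc(t) = -\partial_{y_i}\bigl(a_{ij}(t, y)\,\partial_{y_j} \cdot\bigr) + b_i(t,y)\, \partial_{y_i} + c(t, y),$$
the coefficients $a_{ij}, b_i, c$ are smooth functions of $J, J^{-1}, |J|^{\pm 1/2}, A_h$ and their first spatial derivatives; hence $\dot a_{ij}, \dot b_i, \dot c$ are bounded in $\Cc^1(I\times\Omega_0)$ by a constant depending only on $R$, since they involve partial derivatives of $h$ and $h^{-1}$ of total order at most three. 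A single integration by parts against $v\in H^1_0(\Omega_0)$ then gives the required inequality. Gronwall's lemma applied to $\tilde E$ gives $\tilde E(t)\leq \tilde E(t_0)\,e^{C'|t-t_0|}$, whence the conclusion via the equivalence of Paragraph~2. The general $H^1_0$ case follows by approximating $v_0$ by a sequence in $H^2\cap H^1_0$, using the $L^2$-unitarity of the flow together with the weak lower semicontinuity of $\|\cdot\|_{H^1}$ with respect to $L^2$ convergence. The main technical obstacle is the estimate on $\dot\Hc(t)$: this is exactly where the $\Cc^3$ regularity hypothesis on $h$ and $h^{-1}$ is consumed, since one spatial derivative is absorbed in forming the principal part $a_{ij}$, another in the lower-order coefficients $b_i, c$ coming from the pull-back of $-\Delta$, and a further time derivative is needed when passing from $\Hc$ to $\dot\Hc$.
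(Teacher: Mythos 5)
Your proof is correct and follows essentially the same strategy as the paper: reduce to $v_0\in H^2\cap H^1_0$ by density, build an $H^1$-equivalent energy functional, show that its time derivative is bounded by the energy itself with the dominant term dropping out by self-adjointness (the paper states this as $\Im\|h^\sharp(\grad_x+iA_h)^2 h_\sharp v\|^2 = 0$, you state it as $2\Re\la \Hc v,-i\Hc v\ra = 0$), and conclude by Gr\"onwall together with the $L^2$-invariance of the flow. The only cosmetic difference is the choice of energy: you differentiate $\la \Hc(t)v,v\ra + K\|v\|_{L^2}^2$ while the paper differentiates the explicitly nonnegative $\|h^\sharp(\grad_x+iA_h)h_\sharp v\|_{L^2}^2 + \|v\|_{L^2}^2$; since these two quantities differ by $\|A_h\, h_\sharp v\|_{L^2}^2 + (K-1)\|v\|_{L^2}^2 = O_R(\|v\|_{L^2}^2)$, they are equivalent and yield the same Gr\"onwall inequality.
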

\begin{proof} 
First notice that, arguing by density, it is sufficient to prove the estimate when $v_0\in H^2(\Omega_0)\cap H^1_0(\Omega_0)$. In this case, the corresponding solution $v(t)$ is differentiable with respect to the time and \eqref{SEv} holds in the $L^2-$sense. 

A direct computation (see \cite{Henry,Murat_Simon} or \cite[Proposition 2.1]{SEmoving}) shows that 
\begin{equation}\label{H1bound_eq3}
(h^\sharp\grad_x h_\sharp)v(t,y)=\sqrt{|J(t,y)|}\big(J(t,y)^{-1}\big)^t\cdot\grad_y 
\Big(\frac{v(t,y)}{\sqrt{|J(t,y)|}}\Big).
\end{equation}
In particular, 
\begin{equation}\begin{split}\label{H1bound_eq1}
h^\sharp(\grad_x&+iA_h)h_\sharp v(t,y)= h^\sharp\grad_x h_\sharp v(t,y) -\frac i2 \partial_t h(t,y) v(t,y)\\
&=(J(t,y)^{-1})^t \grad_y v(t,y)\,-\,\frac 12 \Big[ |J(t,y)|^{-1}(J(t,y)^{-1})^t \grad_y\big(|J(t,y)|\big) + i \partial_t h(t,y)\Big]v(t,y).
\end{split}
\end{equation}
It yields the equivalence
\begin{equation}\label{H1bound_eq2}
\frac 1{C_1(R)} \|v\|_{H^1(\Omega_0)} ~\leq~ \|h^\sharp(\grad_x+iA_h)h_\sharp v\|_{L^2(\Omega_0)} + \|v\|_{L^2(\Omega_0)} ~\leq~  C_1(R) \|v\|_{H^1(\Omega_0)},
\end{equation}
where $C_1(R)$ only depends on the bounds on $h$ and its derivatives, and not on $\Omega_0$. 
The $L^2-$norm of $v(t)$ is constant in time. So, thanks to the relation \eqref{H1bound_eq1}, we have
\begin{align*}
 \partial_t \Big(\|&h^\sharp(\grad_x+iA_h)h_\sharp v(t)\|^2_{L^2}+ \|v(t)\|_{L^2}^2\Big)=
2 \Re \big\langle \partial_t\big(h^\sharp(\grad_x+iA_h)h_\sharp v(t)\big)\big|h^\sharp(\grad_x+iA_h)h_\sharp 
v(t)\big\rangle_{L^2}\\
&=2 \Re \big\langle      h^\sharp(\grad_x+iA_h)h_\sharp \ddd_t v(t) \big|h^\sharp(\grad_x+iA_h)h_\sharp 
v(t)\big\rangle_{L^2}  \\
& ~~~~ + 2 \Re \big\langle \ddd_t (J(t,y)^{-1})^t  \grad_y v(t) \big|h^\sharp(\grad_x+iA_h)h_\sharp 
v(t)\big\rangle_{L^2}\\
& ~~~~  - \Re \Big\langle    \ddd_t\Big( |J(t)|^{-1}(J(t)^{-1})^t \grad_y\big(|J(t)|\big) +i(J(t)^{-1})^t \partial_t 
h(t) \Big)
v(t)    \big|h^\sharp(\grad_x+iA_h)h_\sharp 
v(t)\Big\rangle_{L^2}.
\end{align*}
Using the above estimates and the fact that $v(t)$ is the solution of \eqref{SEv}, we obtain 
\begin{align*}
\partial_t\Big(\|h^\sharp(\grad_x+iA_h)h_\sharp v(t)\|^2_{L^2} + &\|v(t)\|_{L^2}^2\Big)\\
&\leq -  2 \Re \big\langle i h^\sharp\Big[\big(\grad_x+iA_h\big)^2+|A_h|^2\Big] h_\sharp 
v(t)\big|h^\sharp\big(\grad_x+iA_h\big)^2 h_\sharp v(t)\big\rangle_{L^2}\\
&~~~~~+ C_2(R) \|v(t)\|_{H^1(\Omega_0)}^2\\
&\leq  2 \Im \Big(\big\| h^\sharp\big(\grad_x+iA_h\big)^2 h_\sharp 
v(t)\big\|_{L^2}^2\Big)+ C_3(R) \|v(t)\|_{H^1(\Omega_0)}^2\\
&\leq C_3(R) \|v(t)\|_{H^1(\Omega_0)}^2\\
&\leq C_4(R)\Big( \|h^\sharp(\grad_x+iA_h)h_\sharp v(t)\|^2_{L^2} + \|v(t)\|_{L^2}^2\Big),
\end{align*}
where the $C_i(R)$ with $i=1,...,4$ are constants only depending on the first three 
derivatives of $h$ and $h^{-1}$, and then on the parameter $R$. It remains to apply Gr\"onwall's lemma 
and the equivalence \eqref{H1bound_eq2}.
\end{proof}

We can deduce from the previous estimate a uniform estimation of the continuity at $t=0$.  
\begin{prop}\label{prop_continuity}
For any $T>0$ and $R\geq 0$, there exists $C>0$ such that the following holds. Let $h\in\Path^3((-T,T),\Bc)$ be a family of diffeomorphisms such that $\|h\|_{\Cc^3((-T,T)\times\Bc,\Bc)}+\|h^{-1}\|_{\Cc^3((-T,T)\times\Bc,\Bc)} \leq R$.  Let $\Omega_0$ be any reference domain in $\Bc$. Then any solution $v(t)$ of \eqref{SEv} corresponding to $h$ with initial data $v(t_0)=v_0\in H^1_0(\Omega_0,\CC)$ satisfies
$$\forall t\in(-T,T)~,~~\|v(t)-v_0\|_{L^2}~\leq~ C \sqrt{|t|} \|v_0\|_{H^1}.$$
\end{prop}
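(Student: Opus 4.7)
The plan is to exploit the Hamiltonian (self-adjoint) structure of \eqref{SEv} to convert the naive bound $\|v(t)-v_0\|_{L^2}\leq C|t|\|v_0\|_{H^2}$ (which loses a derivative) into the desired $\sqrt{|t|}$ bound with only one derivative on $v_0$. By density of $H^2\cap H^1_0$ in $H^1_0$ and the unitarity of the flow given by Theorem \ref{th_Cauchy}, it is sufficient to prove the estimate for $v_0\in H^2(\Omega_0)\cap H^1_0(\Omega_0)$, so that $v\in\Cc^1(I,L^2)$ and \eqref{SEv} holds in the $L^2$-sense.

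The first step uses the unitarity of the flow: since $\|v(t)\|_{L^2}=\|v_0\|_{L^2}$, we can write
$$\|v(t)-v_0\|_{L^2}^2 = 2\|v_0\|_{L^2}^2 - 2\Re\langle v(t),v_0\rangle_{L^2} = -2\Re\langle v(t)-v_0,v_0\rangle_{L^2}.$$
Using that $v$ solves \eqref{SEv}, i.e.\ $i\partial_t v = \mathcal{H}(t)v$ with $\mathcal{H}(t):= -h^\sharp\big[(\div_x+iA_h)(\grad_x+iA_h)+|A_h|^2\big]h_\sharp$, this gives
$$\langle v(t)-v_0,v_0\rangle_{L^2} = -i\int_0^t \langle\mathcal{H}(s)v(s),v_0\rangle_{L^2}\dd s.$$

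The key step is to estimate the integrand by an $H^1\times H^1$ pairing rather than an $H^2\times L^2$ pairing. Since $\mathcal{H}(s)$ is a conjugation by the unitary operator $h^\sharp$ of the magnetic Laplacian on $\Omega(s)$ with Dirichlet boundary conditions, the associated sesquilinear form is, after changing variables back to $\Omega(s)$,
$$\langle \mathcal{H}(s)v(s),v_0\rangle_{L^2(\Omega_0)} = \langle(\grad_x+iA_h)h_\sharp v(s),(\grad_x+iA_h)h_\sharp v_0\rangle_{L^2(\Omega(s))} - \langle |A_h|^2 h_\sharp v(s),h_\sharp v_0\rangle_{L^2(\Omega(s))}.$$
(The boundary terms arising from integration by parts vanish because $h_\sharp v(s),h_\sharp v_0\in H^1_0(\Omega(s))$, as $h^\sharp$ preserves vanishing traces.) Using identity \eqref{H1bound_eq1} and the bounds on $h,h^{-1}$ in $\Cc^3$ exactly as in the proof of Proposition \ref{prop_H1bound}, the right-hand side is controlled by $C(R)\|v(s)\|_{H^1(\Omega_0)}\|v_0\|_{H^1(\Omega_0)}$.

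Combining this with Proposition \ref{prop_H1bound}, which gives $\|v(s)\|_{H^1(\Omega_0)}\leq C e^{C|s|}\|v_0\|_{H^1(\Omega_0)}\leq C e^{CT}\|v_0\|_{H^1(\Omega_0)}$ uniformly in $s\in(-T,T)$, we obtain
$$|\langle v(t)-v_0,v_0\rangle_{L^2}|\leq C(R,T)\,|t|\,\|v_0\|_{H^1(\Omega_0)}^2,$$
and therefore $\|v(t)-v_0\|_{L^2}^2\leq 2 C(R,T)|t|\|v_0\|_{H^1}^2$, which yields the desired estimate. The main technical point is the integration-by-parts step on the moving domain: the Hamiltonian structure of \eqref{SEv} is crucial there, since it is what allows the symmetric distribution of one derivative onto $v_0$ and one onto $v(s)$, while a purely formal computation with $\mathcal{H}(s)$ would require two derivatives on $v(s)$.
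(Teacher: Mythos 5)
Your proof is correct and takes essentially the same route as the paper: a density reduction to $H^2\cap H^1_0$ data, integration by parts via the self-adjoint sesquilinear form of the conjugated magnetic Laplacian so that one derivative lands on $v(s)$ and one on $v_0$ (an $H^1\times H^1$ pairing), propagation of the $H^1$-norm through Proposition~\ref{prop_H1bound}, and a time integration producing the linear $|t|$ factor and hence the $\sqrt{|t|}$ bound. The only difference is cosmetic: you invoke unitarity to rewrite $\|v(t)-v_0\|^2_{L^2}=-2\Re\langle v(t)-v_0,v_0\rangle$ and then bound $\int_0^t\langle\mathcal{H}(s)v(s),v_0\rangle\,\dd s$, which discards the real self-pairing $\langle\mathcal{H}(s)v(s),v(s)\rangle$ from the outset, whereas the paper differentiates $\|v(t)-v_0\|_{L^2}^2$ directly and bounds $\partial_t\|v(t)-v_0\|_{L^2}^2$ uniformly via $C(R)\|v(t)\|_{H^1}\big(\|v(t)\|_{H^1}+\|v_0\|_{H^1}\big)$ before integrating.
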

\begin{proof}
As in the proof of Proposition \ref{prop_H1bound}, $h$ is smooth enough to be able to argue by density and by assuming that $v_0$ belongs to $H^2\cap H^1_0$. Using the same arguments as above, we write
\begin{align*}
\partial_t \|v(t)-v_0\|_{L^2(\Omega_0)}^2 & = 2\Re \big\la \partial_t v(t) \,\big |\, v(t)-v_0\big\ra_{L^2} \\
&= -2\Im \big\la h^\sharp\big[(\grad_x+iA_h)^2+|A_h|^2\big]h_\sharp v(t) \,\big|\, v(t)-v_0\big\ra_{L^2}\\
&=2\Im \big\la h^\sharp\big[(\grad_x+iA_h)+|A_h|\big]h_\sharp v(t) \,\big|\, h^\sharp\big[(\grad_x+iA_h)-|A_h|\big]h_\sharp(v(t)-v_0)\big\ra_{L^2}\\
&\leq C(R) \|v(t)\|_{H^1}\left( \|v(t)\|_{H^1} + \|v_0\|_{H^1}\right),
\end{align*}   
with $C(R)>0$ only depending on $R$. Finally, we obtain a uniform bound for $\partial_t 
\|v(t)-v_0\|_{L^2(\Omega_0)}^2$ by using  Proposition \ref{prop_H1bound} and the claim is ensured since 
$
\|v(t)-v_0\|_{L^2(\Omega_0)}^2\leq t\sup_{t\in (-T,T)}|\partial_t 
\|v(t)-v_0\|_{L^2(\Omega_0)}^2|.$
\end{proof}

We can also estimate the continuity of the solutions with respect to the deformations of the domain. 
\begin{prop}\label{prop_difference}
Let $I$ be a time interval and $t_0 \in I$. For any $R\geq 0$, there exists $C>0$ such that the following holds. Let $h\in\Path^3(I,\Bc)$ and $g\in\Path^3(I,\Bc)$ be two families of diffeomorphisms such that 
 $$\|h\|_{\Cc^3(I\times\Bc,\Bc)}+\|h^{-1}\|_{\Cc^3(I\times\Bc,\Bc)}\leq 
R \ \ \ \text{and} \ \ \ \|g\|_{\Cc^3(I\times\Bc,\Bc)}+\|g^{-1}\|_{\Cc^3(I\times\Bc,\Bc)}\leq R~.$$
Let $\Omega_0$ be any reference domain in $\Bc$. Let $v(t)$ be the solutions of \eqref{SEv} corresponding to $h$ with initial data $v(t_0)=v_0\in H^1_0(\Omega_0,\CC)$ and $w$ be another solution 
corresponding to $g$ with initial data $w(t_0)=w_0\in H^1_0(\Omega_0,\CC)$, as given by Theorem \eqref{th_Cauchy}. Then,
$$\|v(t)-w(t)\|_{L^2}^2~\leq \|v_0-w_0 \|_{L^2}^2+ 
C \left(e^{C |t-t_0|}-1\right)\| h-g\|_{\Cc^2(I\times\Bc,\Bc)} \|v_0\|_{H^1}\|w_0\|_{H^1}.$$
\end{prop}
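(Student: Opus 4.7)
The plan is to adapt the energy method used in the preceding two propositions. First, by density, I reduce to the case $v_0, w_0 \in H^2(\Omega_0) \cap H^1_0(\Omega_0)$, so that both $v(t)$ and $w(t)$ are strong solutions of \eqref{SEv} (for $h$ and $g$ respectively) and the time derivative of $\|v(t) - w(t)\|_{L^2}^2$ is well-defined.

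The key algebraic step is to differentiate $\|v-w\|_{L^2}^2$, substitute $i\partial_t v = H_h v$ and $i\partial_t w = H_g w$ (where $H_h$ and $H_g$ denote the pulled-back Hamiltonians on the right-hand side of \eqref{SEv} for $h$ and $g$, which are self-adjoint at each time by Theorem \ref{th_Cauchy}), and use that $\Im \la H_h v | v\ra = \Im \la H_g w | w\ra = 0$ while $\Im \la H_g w | v\ra = -\Im \la H_g v | w\ra$. After these cancellations, the cross-terms collapse to
$$\partial_t \|v(t) - w(t)\|_{L^2}^2 = 2 \Im \la (H_h - H_g) v(t) \,\big|\, w(t)\ra.$$
This particular grouping is essential: it is what ultimately produces the product $\|v_0\|_{H^1} \|w_0\|_{H^1}$ in the final bound, rather than a sum of squares.

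Next, to avoid using the $H^2$-regularity of $v$ and $w$, I pass to the quadratic-form picture. Integrating by parts on $\Omega(t)$ (Dirichlet conditions kill the surface term) and transporting the result back to $\Omega_0$ via the unitary $h^\sharp$ yields
$$\la H_h v \,\big|\, w \ra_{L^2(\Omega_0)} = \la \Dc_h v \,\big|\, \Dc_h w \ra_{L^2(\Omega_0,\CC^d)} - \la \Vc_h v \,\big|\, w \ra_{L^2(\Omega_0)},$$
where $\Dc_h := h^\sharp(\grad_x + iA_h) h_\sharp$ and $\Vc_h := h^\sharp |A_h|^2 h_\sharp = \tfrac14|\partial_t h|^2$. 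Subtracting the analogous identity for $g$ and symmetrizing gives
$$\la (H_h - H_g) v \,\big|\, w \ra = \la (\Dc_h - \Dc_g) v \,\big|\, \Dc_h w \ra + \la \Dc_g v \,\big|\, (\Dc_h - \Dc_g) w \ra - \la (\Vc_h - \Vc_g) v \,\big|\, w \ra.$$
Inspecting the explicit formula \eqref{H1bound_eq1} and using the norm equivalence \eqref{H1bound_eq2}, each term is bounded by $C(R) \|h - g\|_{\Cc^2(I \times \Bc)} \|v\|_{H^1} \|w\|_{H^1}$. Applying Proposition \ref{prop_H1bound} to $v$ and $w$ then produces
$$\big|\partial_t \|v(t)-w(t)\|_{L^2}^2\big| \leq C(R)\, e^{C(R)|t-t_0|}\, \|h-g\|_{\Cc^2(I\times\Bc)}\, \|v_0\|_{H^1} \|w_0\|_{H^1},$$
and integrating from $t_0$ to $t$ yields the claimed factor $e^{C|t-t_0|} - 1$.

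The main obstacle I expect is the $\Cc^2$-Lipschitz bound $\|(\Dc_h - \Dc_g) v\|_{L^2} \lesssim \|h-g\|_{\Cc^2} \|v\|_{H^1}$. The formula \eqref{H1bound_eq1} contains the coefficient $|J|^{-1}(J^{-1})^t \grad(|J|)$, which involves second-order derivatives of the diffeomorphism; verifying that the corresponding difference for $h$ and $g$ is genuinely controlled by the $\Cc^2$-norm of $h - g$ alone, with constants depending only on the uniform bound $R$, requires a careful expansion of each factor. Everything else, in particular the uniform control $\|\Dc_h w\|_{L^2} \leq C(R)\|w\|_{H^1}$ of the first-order twisted gradient, is already built into \eqref{H1bound_eq2} from the previous proof.
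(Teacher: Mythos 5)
Your proposal is correct and follows essentially the same route as the paper: the same reduction by density to $H^2\cap H^1_0$ data, the same symmetric splitting of $\la H_h v\,|\,w\ra - \la H_g v\,|\,w\ra$ into two cross terms involving the twisted gradient difference $\Dc_h-\Dc_g$ plus a zeroth-order potential difference, the same $\Cc^2$-Lipschitz estimate on these differences via \eqref{H1bound_eq3}--\eqref{H1bound_eq2}, and the same application of Proposition \ref{prop_H1bound} followed by integration in time. The ``main obstacle'' you flag — that $\|(\Dc_h-\Dc_g)v\|_{L^2}\lesssim\|h-g\|_{\Cc^2}\|v\|_{H^1}$ with constants depending only on $R$ — is exactly the estimate the paper carries out explicitly by separating the $(Dh^{-1})^t-(Dg^{-1})^t$ coefficient (hitting $\grad_y v$, costing $\Cc^1$ of $h-g$) from the multiplicative coefficient $\sqrt{|Dh|}\,h^*\grad_x h_*(\sqrt{|Dh|}^{-1})$ versus its $g$-analogue (costing $\Cc^2$ of $h-g$), and it goes through as you anticipate.
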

\begin{proof}
We use the same arguments leading to the previous propositions. First, we notice that $h$ and $g$ are smooth 
enough to be able to argue by density and by assuming that $v_0$ and $w_0$ belong to $H^2\cap H^1_0(\Omega_0,\CC)$. 
Then, since the flow is unitary,
\begin{align}
\ddd_t\|v(t)-&w(t)\|^2_{L^2}
=-2Re\Big(\la\ddd_t v(t)|w(t)\ra_{L^2}+\la 
v(t)|\ddd_tw(t)\ra_{L^2}\Big) \nonumber\\
=&-2Im\Big\la h^\sharp\Big[\big(\grad_x+iA_h\big)^2+|A_h|^2\Big]h_\sharp             
 v(t)\Big|w(t)\Big\ra_{L^2}\nonumber\\
 &+2Im\Big\la         
 v(t)\Big| g^\sharp\Big[\big(\grad_x+iA_g\big)^2+|A_g|^2\Big]g_\sharp 
 w(t)\Big\ra_{L^2}\nonumber\\
  =&-2Im\Big\la h^\sharp\big(\grad_x+iA_h\big)h_\sharp            
 v(t)\Big|h^\sharp \big(\grad_x+iA_h\big) h_\sharp w(t)\Big\ra_{L^2}\nonumber\\
 &+2Im \Big\la g^\sharp\big(\grad_x+iA_g\big)g_\sharp             
 v(t)\Big|g^\sharp\big(\grad_x+iA_g\big) g_\sharp w(t)\Big\ra_{L^2}\nonumber\\ 
  &-2Im \Big\la h^\sharp|A_h|^2 h_\sharp      
 v(t) \Big|  w(t)\Big\ra_{L^2} +2Im \Big\la v(t) \Big| g^\sharp|A_g|^2 g_\sharp w(t)\Big\ra_{L^2} \nonumber\\
 =&-2Im\Big\la h^\sharp\big(\grad_x+iA_h\big)h_\sharp            
 v(t)\Big|\Big(h^\sharp\big(\grad_x+iA_h\big)h_\sharp- g^\sharp \big(\grad_x+iA_g\big)g_\sharp  \Big)  w(t)\Big\ra_{L^2} \nonumber\\
&-2Im\Big\la\Big(h^\sharp \big(\grad_x+iA_h\big)h_\sharp - g^\sharp \big(\grad_x+iA_g\big)g_\sharp \Big)      
 v(t)\Big| g^\sharp \big(\grad_x+iA_g\big)g_\sharp  w(t)\Big\ra_{L^2} \label{long_eq_continuity1}\\
 &+2Im \big\la \big(g^\sharp|A_g|^2 g_\sharp -  h^\sharp|A_h|^2 h_\sharp\big)       
 v(t) \big|  w(t)\big\ra_{L^2}. \nonumber
\end{align}
We study the objects appearing in \eqref{long_eq_continuity1}. First, since $h^\sharp |A_h|^2h_\sharp=\frac 14 (\partial_t h)^2$ and $g^\sharp |A_g|^2h_\sharp=\frac 14 (\partial_t g)^2$, we easily bound the last term by 
$$  \Big|\big\la \big(g^\sharp|A_g|^2 g_\sharp -  h^\sharp|A_h|^2 h_\sharp\big)       
 v(t) \big|  w(t)\big\ra_{L^2} \Big|~\leq~C_1(R)\| h-g\|_{\Cc^1(I\times\Bc,\Bc)}\|v_0\|_{L^2}\|w_0\|_{L^2}~.$$
Secondly, as shown by \eqref{H1bound_eq2}, 
$$\big\|h^\sharp\big(\grad_x+iA_h\big)h_\sharp            
 v(t)\big\|_{L^2} \leq C_2(R) \|v(t)\|_{H^1} ~~\text{ and }~~\big\|g^\sharp\big(\grad_x+iA_g\big)g_\sharp w(t)\big\|_{L^2} \leq C_2(R) \|w(t)\|_{H^1}~.$$
Then, we write
\begin{align*}
\big\|\big(h^\sharp\big(\grad_x+iA_h\big)h_\sharp- g^\sharp 
\big(\grad_x+iA_g\big)g_\sharp&  \big)  w(t)\big\|_{L^2}
= \big\|\big(h^\sharp \grad_xh_\sharp -g^\sharp 
\grad_xg_\sharp\big)w(t)+\frac{i}{2} \big(\ddd_t h-\ddd_t g\big) 
 w(t)\big\|_{L^2}\\
&\leq \big\|\big(h^\sharp \grad_xh_\sharp -g^\sharp 
\grad_xg_\sharp\big)w(t)\big\|_{L^2} + \|h-g\|_{\Cc^1(I\times\Bc,\Bc)}\|w_0\|_{L^2}.
\end{align*}
It remains to estimate the terms of the type $\big\|\big(h^\sharp \grad_xh_\sharp -g^\sharp \grad_xg_\sharp\big)w(t)\big\|_{L^2}$. 
By using \eqref{H1bound_eq3}, we obtain
\begin{align*}
\big\|\big(h^\sharp \grad_xh_\sharp -g^\sharp \grad_xg_\sharp&\big)w(t)\big\|_{L^2}\\
&\leq 
\Big\|\sqrt{|Dh|}h^*\grad_xh_*\Big(\sqrt{|Dh|}^{-1}\Big)-\sqrt{|Dg|}g^*\grad_xg_*\Big(\sqrt{|Dg| }^{-1} \Big)\Big\|_{L^\infty}\|w(t)\|_{L^2}\\
&~~~~+\big\| \big(\big(Dh^{-1}\big)^t -\big(Dg^{-1}\big)^t\big)\big\|_{L^\infty}\|\grad_y w(t)\big\|_{L^2}\\
&\leq C_3(R) \|h-g\|_{\Cc^2(I\times\Bc,\Bc)} \|w(t)\|_{H^1}.
\end{align*}
Again, we underline that the constants $C_i(R)$ with $i=1,2,3$ do not depend on $\Omega_0$ or the initial data. By 
using the Cauchy-Schwarz inequality in \eqref{long_eq_continuity1} and the above estimates, we obtain a constant 
$C_4(R)>0$, only depending on $R$, such that
$$\ddd_t\|v(t)-w(t)\|^2_{L^2}\leq C_4(R) \|h-g\|_{\Cc^2(I\times\Bc,\Bc)} \|v(t)\|_{H^1}\|w(t)\|_{H^1}.$$
Finally, we apply Proposition \ref{prop_H1bound} to get that there exist $C_5(R),C_6(R)>0$, only depending on $R$, 
such that
$$\ddd_t\|v(t)-w(t)\|^2_{L^2}\leq C_5(R) e^{C_6(R)|t-t_0|} 
\|h-g\|_{\Cc^2(I\times\Bc,\Bc)} \|v_0\|_{H^1}\|w_0\|_{H^1}$$
and it remains to integrate this last estimate in order to ensure the claim. 
\end{proof}

\section{Moving domains and the spectrum of the Dirichlet Laplacian}\label{section3}

\subsection{Generic properties}\label{section_simplicity}
Micheletti in \cite{Micheletti} was one of the first to show that the spectrum of the Laplacian operator is simple, generically with respect to the geometry of the domain. See also the work of Uhlenbeck \cite{Uhlenbeck} and the very complete book of Henry \cite{Henry}. They consider fixed domains $\Omega=h(\Omega_0)$, with $h$ a $\Cc^k-$diffeomorphism in $\Diff^k(\Bc)$ as in Definition \ref{defi_diffeo}. By a \emph{generic set of domains}, we mean a generic subset of the Banach manifold $\Diff^k(\Bc)$. We recall that a subset of a Banach manifold $X$ is called \emph{generic} if it contains a countable intersection of dense open subsets of $X$.
\begin{theorem}[{\bf \cite{Micheletti},\cite{Uhlenbeck}, Chapter 6 of \cite{Henry}}]\label{th_simplicity}
Let $\Bc$ be a closed ball and let $\Omega_0$ be an open $\Cc^2-$domain, or a polyhedron, with $\overline\Omega_0\subset \mathring\Bc$. For any $k\geq 2$, there is a generic set of diffeomorphisms $h\in\Diff^k(\Bc)$ such that the Laplacian operator $\Delta$ in $h(\Omega_0)$ has only simple eigenvalues.
\end{theorem}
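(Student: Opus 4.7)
The plan is to apply the Baire category theorem. For each $n\geq 1$, let
$$\Oc_n = \{h\in\Diff^k(\Bc) : \lambda_1(h) < \lambda_2(h) < \cdots < \lambda_n(h)\},$$
where $\lambda_j(h)$ denotes the $j$-th Dirichlet eigenvalue of $-\Delta$ on $h(\Omega_0)$. The set of diffeomorphisms with simple Dirichlet spectrum is then $\bigcap_{n\geq 1}\Oc_n$, so it suffices to show that each $\Oc_n$ is open and dense in the Banach manifold $\Diff^k(\Bc)$. Openness follows from the pull-back construction of Section \ref{section_moving}: the Dirichlet Laplacian on $h(\Omega_0)$ is unitarily equivalent via $h^\sharp$ to an elliptic operator on the fixed space $L^2(\Omega_0)$ whose coefficients depend continuously on $h$ in $\Cc^{k-1}$, so classical analytic perturbation theory for self-adjoint operators yields continuous dependence of every $\lambda_j(h)$ and strict inequalities persist under small perturbations.

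For density I would argue by induction on $n$. Assume that $\Oc_{n-1}$ is dense and fix $h_0 \in \Oc_{n-1}$; I need to approximate $h_0$ by elements of $\Oc_n$. The only obstruction is that $\lambda_n(h_0)$ may be a multiple eigenvalue, with $\lambda_n(h_0) = \cdots = \lambda_{n+m-1}(h_0) =: \lambda$ for some $m\geq 2$ (note that $\lambda_{n-1}(h_0)<\lambda$ by the inductive hypothesis). Take a smooth one-parameter family $h_t = (\id + tV)\circ h_0$, where $V$ is a vector field supported near $\partial h_0(\Omega_0)$, vanishing near $\partial\Bc$ and, in the polyhedral case, away from the singular set of the boundary. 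The Hadamard shape derivative formula says that the $m$ eigenvalues bifurcating from $\lambda$ are, to leading order in $t$, the eigenvalues of the symmetric $m\times m$ matrix
$$M(V)_{ij} \;=\; -\int_{\partial h_0(\Omega_0)} (\partial_\nu \phi_i)(\partial_\nu \phi_j)\,(V\cdot\nu)\,d\sigma,$$
where $(\phi_i)_{1\leq i\leq m}$ is an orthonormal basis of the $\lambda$-eigenspace. If $M(V)$ has simple spectrum, then the bifurcating eigenvalues stay strictly between $\lambda_{n-1}(h_t)$ and $\lambda_{n+m}(h_t)$ and are pairwise distinct, so $h_t\in\Oc_n$ for all small $t>0$.

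The crux is therefore a surjectivity lemma: the linear map $V\mapsto M(V)$ from admissible vector fields to real symmetric $m\times m$ matrices is surjective, equivalently the $\binom{m+1}{2}$ boundary functions $(\partial_\nu \phi_i)(\partial_\nu \phi_j)$ with $i\leq j$ are linearly independent in $L^1$ of a smooth portion of $\partial h_0(\Omega_0)$. I expect this to be the main obstacle. The argument rests on unique continuation: a non-trivial linear combination of the products vanishing identically on an open smooth piece of $\partial h_0(\Omega_0)$ can be diagonalized within the eigenspace to produce a non-trivial $\psi = \sum c_i \phi_i$ whose normal derivative vanishes on a smooth portion of the boundary; combined with the Dirichlet condition $\psi|_{\partial h_0(\Omega_0)}=0$ and the elliptic equation $-\Delta\psi = \lambda\psi$, Holmgren's theorem forces $\psi\equiv 0$, contradicting the linear independence of the $\phi_i$. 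In the polyhedral case one works locally on the relatively open interior of a fixed face, where the eigenfunctions are real-analytic up to the boundary, so that Holmgren still applies. Once this lemma is established, choosing $V$ so that $M(V)$ has simple spectrum is an open-dense condition on $V$, and the Baire argument concludes the proof.
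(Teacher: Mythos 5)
The paper does not prove Theorem \ref{th_simplicity}; it is imported from the literature (Micheletti, Uhlenbeck, and Chapter~6 of Henry), so no internal proof exists to compare against. Your Baire-category scaffold is exactly the structure of the classical arguments: the sets $\Oc_n=\{h:\lambda_1(h)<\cdots<\lambda_n(h)\}$ are open by continuous dependence of the pulled-back operator on $h$, and one shows they are dense by splitting degeneracies with a Hadamard shape derivative. That part is fine.

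The gap is in your surjectivity lemma. From $\sum_{i\leq j}c_{ij}\,(\partial_\nu\phi_i)(\partial_\nu\phi_j)\equiv 0$ on a smooth boundary piece, orthogonally diagonalizing the symmetric matrix $(c_{ij})$ replaces $(\phi_i)$ by another orthonormal basis $(\psi_i)$ of the eigenspace and yields $\sum_i d_i(\partial_\nu\psi_i)^2\equiv 0$, where the $d_i$ are the eigenvalues of $(c_{ij})$. This forces $\partial_\nu\psi_i\equiv 0$ for each $i$ with $d_i\neq 0$ only when all the nonzero $d_i$ have the same sign. In the indefinite case nothing is forced: for instance $d_1=d_2=1$, $d_3=-1$ gives $(\partial_\nu\psi_1)^2+(\partial_\nu\psi_2)^2=(\partial_\nu\psi_3)^2$, and no single linear combination of eigenfunctions acquires vanishing Cauchy data, so Holmgren/unique continuation gives no contradiction. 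Your sentence ``can be diagonalized within the eigenspace to produce a non-trivial $\psi$ whose normal derivative vanishes'' is therefore not justified in general, and full surjectivity of $V\mapsto M(V)$ onto all symmetric matrices is not something I see how to establish (and may fail for some non-generic domains with high multiplicity). Note also that for a merely $\Cc^2$ boundary, eigenfunctions are smooth but not real-analytic up to the boundary, so the Holmgren step needs care even in the definite case; on the interior of a polyhedral face the reflection argument you invoke is fine.

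The fix is that density of $\Oc_n$ does not need surjectivity: it suffices to show that whenever the eigenvalue $\lambda_n(h_0)$ has multiplicity $m\geq 2$, there is an admissible $V$ for which $M(V)$ is not a scalar matrix. Indeed, a non-scalar $M(V)$ has smallest and largest eigenvalues distinct, so each has multiplicity strictly less than $m$; perturbing along $V$ then strictly lowers the multiplicity of $\lambda_n$, and iterating finitely many such arbitrarily small perturbations produces an element of $\Oc_n$. Non-scalarity is exactly what unique continuation delivers cleanly: if $M(V)$ were a multiple of the identity for every admissible $V$, then $(\partial_\nu\phi_1)(\partial_\nu\phi_2)\equiv 0$ and $(\partial_\nu\phi_1)^2\equiv(\partial_\nu\phi_2)^2$ on every smooth boundary piece, which pointwise forces $\partial_\nu\phi_1\equiv\partial_\nu\phi_2\equiv 0$; together with the Dirichlet condition and unique continuation this kills the eigenfunctions, a contradiction. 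This weaker ``non-scalar'' lemma and the iterative splitting are the actual engine behind the cited results, and avoid the definite/indefinite difficulty entirely.
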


In this paper, we need to follow paths of domains $\Omega(t)$ without meeting multiple eigenvalues. The genericity result above is not sufficient: we need to know that the domains with multiple eigenvalues belong to a set of codimension at least $2$. To study the codimension of this set, \cite{CdV} introduces the {\it strong Arnold hypothesis}. As noticed in \cite{Teytel}, when we only want to obtain a codimension larger than $2$, we may consider a weaker hypothesis: the (SAH2) presented below. In \cite{Teytel}, Teytel shows that for any couple diffeomorphic domains $\Omega(0)$ and $\Omega(1)$, we can find an analytic path $(\Omega(\tau))_{\tau\in[0,1]}$ linking them, such that, for all $\tau\in (0,1)$, the Laplacian operator on $\Omega(\tau)$ has a simple spectrum. In fact, the proof yields a stronger result. Firstly, this path can be made as close as wanted to a target path. Secondly, it is possible to consider a subfamily of possible domains as soon as this family satisfies the hypothesis (SAH2) explicitly stated in \cite{Teytel}. Lastly, even if this is not useful for us, notice that \cite{Teytel} states abstract results with many other applications than the paths of domains. 
\begin{theorem}[{\bf Theorem 6.4 of \cite{Teytel}}]\label{th_Teytel}
Let $\Bc$ be a closed ball and let $\Omega_0$ be a connected open $\Cc^2-$domain, or a polyhedron, with $\overline\Omega_0\subset \mathring\Bc$.
Let $k\geq 2$ and let $h\in\Path^k([0,1],\Bc)$ representing a path of domains $\Omega(\tau)=h(\tau,\Omega_0)$. Then, for all $\varepsilon>0$, there exists a close path $g\in\Path^k([0,1],\Bc)$ such that the spectrum of the Dirichlet Laplacian operator $-\Delta$ in $\tilde\Omega(\tau)=g(\tau,\Omega_0)$ is simple for all $\tau\in(0,1)$ and 
$$g(0)=h(0),\ \ \ \ \  g(1)=h(1),\ \ \ \ \   \|g-h\|_{\Cc^k([0,1]\times\Bc,\Bc)}< \varepsilon.$$
\end{theorem}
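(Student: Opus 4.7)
The theorem is a genericity result for paths in the infinite-dimensional manifold $\Diff^k(\Bc)$, and it follows from a transversality argument provided the "bad set" of diffeomorphisms producing multiple Dirichlet eigenvalues has codimension at least two. I would organize the proof around three steps: (i) stratify the bad set, (ii) verify Teytel's (SAH2) hypothesis, (iii) apply an infinite-dimensional transversality theorem for paths with fixed endpoints.

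\emph{Step 1: Stratification of the bad set.} For each $m\geq 2$ and $j\geq 1$, let $\mathcal{E}_{j,m}\subset\Diff^k(\Bc)$ be the set of $h$ such that the Dirichlet Laplacian on $h(\Omega_0)$ has an eigenvalue of multiplicity exactly $m$, positioned from index $j$ to $j+m-1$ in the increasing enumeration. The full bad set is the countable union $\mathcal{E}=\bigcup_{j,m}\mathcal{E}_{j,m}$. Using analytic perturbation theory (Kato--Rellich) applied via the diffeomorphism pull-back, each $\mathcal{E}_{j,m}$ admits a local description: near $h_0\in\mathcal{E}_{j,m}$ with eigenspace $V\subset L^2(h_0(\Omega_0))$ of dimension $m$, the eigenvalues near the multiple one are governed by a smooth map $h\mapsto M(h)$ into the space of real symmetric $m\times m$ matrices, and $h\in\mathcal{E}_{j,m}$ if and only if $M(h)$ is a scalar multiple of the identity. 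This is $\tfrac12 m(m+1)-1\geq 2$ scalar conditions.

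\emph{Step 2: Verification of (SAH2).} The key analytic ingredient is that the differential $dM_{h_0}$ is surjective onto the space of real symmetric $m\times m$ matrices. Using the Hadamard formula, an infinitesimal boundary perturbation $\delta h$ produces the $m\times m$ matrix whose $(a,b)$ entry is essentially $\int_{\partial\Omega}(\partial_\nu\varphi_a)(\partial_\nu\varphi_b)\,\delta h\cdot\nu$, where $(\varphi_a)_{1\leq a\leq m}$ is an orthonormal basis of $V$. Surjectivity onto symmetric matrices then follows from the fact that the family $(\partial_\nu\varphi_a\,\partial_\nu\varphi_b)_{a\leq b}$ is linearly independent on $\partial\Omega$; this is a consequence of the unique continuation principle for eigenfunctions of $-\Delta$ together with the real-analytic structure of $\partial\Omega$ (handling corners of the polyhedral case separately by working on a smooth piece). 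This is (SAH2) in Teytel's terminology and implies that $\mathcal{E}_{j,m}$ is a submanifold of codimension $\geq 2$ in $\Diff^k(\Bc)$.

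\emph{Step 3: Transversality for paths with fixed endpoints.} The space of paths $g\in\Path^k([0,1],\Bc)$ with $g(0)=h(0)$ and $g(1)=h(1)$ is an affine Banach manifold on which the evaluation map $(g,\tau)\mapsto g(\tau)\in\Diff^k(\Bc)$ is a smooth submersion on $[0,1]\times\{\text{paths}\}$ for $\tau\in(0,1)$. By the Sard--Smale / parametric transversality theorem applied to this evaluation, the set of paths whose image is disjoint from any fixed codimension-$\geq 2$ submanifold of $\Diff^k(\Bc)$ is residual (hence dense). Intersecting over the countable family $\{\mathcal{E}_{j,m}\}$, we obtain a residual set of admissible paths avoiding $\mathcal{E}$ entirely on $(0,1)$. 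Concretely, one constructs $g$ by writing $g(\tau)=h(\tau)+\rho(\tau)\eta(\tau)$ with a cutoff $\rho\in\Cc^\infty_c((0,1))$ and a perturbation $\eta$ chosen generically; for $\eta$ small enough in $\Cc^k$, $g$ remains in $\Path^k([0,1],\Bc)$ and $\|g-h\|_{\Cc^k}<\varepsilon$.

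\emph{Main obstacle.} The delicate step is (SAH2): proving that the Hadamard-type differential $dM_{h_0}$ genuinely hits every direction in the space of symmetric matrices. This reduces to the linear independence of the normal-derivative products $\partial_\nu\varphi_a\,\partial_\nu\varphi_b$ along $\partial\Omega$, which is where unique continuation and the regularity hypotheses on $\Omega_0$ (smooth or polyhedral, away from corners) are essential. Once this is in hand, the transversality arguments of Step 3 are standard and yield both the avoidance of multiple eigenvalues and the $\Cc^k$-closeness to the original path.
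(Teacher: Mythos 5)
Your outline reconstructs Teytel's machinery from scratch, whereas the paper's proof is deliberately a commentary on Teytel: it cites his Section~6 for the verification of (SAH2) in the full diffeomorphism class, and concentrates on the two adaptations that cannot simply be quoted, namely (a) dropping the homotopy‑to‑the‑ball assumption on $\Omega(0),\Omega(1)$ (because the existence of a connecting path is a hypothesis here), and (b) the fact that the path produced in Teytel's Theorem~B is piecewise linear, so his construction does not directly give $\mathcal C^k$‑closeness to a prescribed smooth path. For (b) the paper replaces the global density argument by a purely local one: cover a compact tubular neighbourhood of the given path by coordinate patches $g(y)=h(t,y)+\gamma(t)\delta(y)$ with a cutoff $\gamma$ vanishing to all orders at the endpoints, observe that $g\mapsto\delta$ is Fredholm of index~$1$, apply (SAH2) to conclude that the pushed‑forward bad set is meager in $\mathcal D$, and iterate finitely many times. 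This local argument is what gives the $\|g-h\|_{\mathcal C^k}<\varepsilon$ control the statement requires.

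Two points in your sketch are genuine gaps and not merely stylistic choices. First, in Step~2 you assert that the linear independence of the products $\partial_\nu\varphi_a\,\partial_\nu\varphi_b$ on $\partial\Omega$ is ``a consequence of the unique continuation principle together with the real‑analytic structure of $\partial\Omega$''; but $\partial\Omega=h(\partial\Omega_0)$ is only $\mathcal C^k$, not real‑analytic, and the independence in fact fails without connectedness (if $\varphi_1,\varphi_2$ are supported on different components, the cross term is identically zero). The paper emphasizes exactly this point — ``this is why the connectedness requested in Theorem~3.2 above is mandatory'' — and otherwise cites Teytel's explicit computation rather than derive it from unique continuation; your argument for this step needs to be either replaced by that citation or worked out genuinely. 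Second, in Step~3 a raw appeal to Sard--Smale for the evaluation map of paths is problematic in infinite dimensions: the evaluation $g\mapsto g(\tau)$ is a submersion onto an infinite‑dimensional target and is not Fredholm, so the standard parametric transversality theorem does not apply directly. The paper's local tubular‑neighbourhood construction is precisely how one turns the problem into a Fredholm (index~$1$) map to which the meagerness argument applies; your outline needs an analogous localization to make the transversality claim rigorous.
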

There are few differences with the original statement of Teytel, that are discussed in the proof below. 
We also would like to restrict the possible domains to stay, for example, in the class of polygonal domains. To this end, we restrict the possible diffeomorphisms to a submanifold $\Hc$ of $\Diff^k(\Bc)$. At each $h\in\Hc$, the tangent space $T_h\Hc$ is a subspace of $\Cc^k(\Bc,\RR^d)$. In this framework, the hypothesis (SAH2) is as follows (see Sections 1 and 6 of \cite{Teytel}).
\begin{itemize}
\item[(SAH2)] Let $h\in\Hc \subset \Diff^k(\Bc)$ and $N\in\NN$. We say that (SAH2) is satisfied at $h$ along the submanifold $\Hc$ for the $N$ first eigenvalues when the following property is verified. If the Dirichlet Laplacian operator $-\Delta$ in 
$\Omega=h(\Omega_0)$ has a multiple eigenvalue $\lambda$ among its first $N$ eigenvalues, then there are two 
orthogonal eigenfunctions $\varphi_1$ and $\varphi_2$ corresponding to the eigenvalue $\lambda$ such that the three linear functionals
$$g\in T_h\Hc~\longmapsto~\int_{\partial\Omega} \Dnu{\varphi_i}\Dnu{\varphi_j} \big\la (h_*g)(\sigma)|\nu(\sigma)\big\ra \dd\sigma~~~~\text{with }(i,j)=(1,1),~(2,2)\text{ or }(1,2)$$
are linearly independent, where $\nu(\sigma)$ denotes the normal vector to $\partial \Omega$ at $\sigma$.
\end{itemize}
We can state a modified version of the result of Teytel.
\begin{theorem}\label{th_Teytel_bis}
Let $k\geq 2$, $N\in\NN$ and let $h\in\Path^k([0,1],\Bc)$ representing a path of domains $\Omega(\tau)=h(\tau,\Omega_0)$. Assume that, for all $\tau\in [0,1]$, $h(\tau)$ belongs to the subclass $\Hc\subset\Diff^k(\Bc)$ and that Hypothesis (SAH2) holds at $h(\tau)$ along $\Hc$ for the $N$ first eigenvalues. Then, for all $\varepsilon>0$, there exists a path $g\in\Path^k([0,1],\Bc)$ such that, for all $\tau\in(0,1)$, there holds $g(\tau)\subset \Hc$, the $N$ first eigenvalues of the Dirichlet Laplacian operator $-\Delta$ in $\tilde\Omega(\tau)=g(\tau,\Omega_0)$ are simple and
$$g(0)=h(0),\ \ \ \ \ g(1)=h(1),\ \ \ \ \ \|g-h\|_{\Cc^k([0,1]\times\Bc,\Bc)}< \varepsilon.$$
\end{theorem}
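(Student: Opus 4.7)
The plan is to adapt Teytel's proof of Theorem \ref{th_Teytel} (see Section~6 of \cite{Teytel}) to the restricted submanifold $\Hc\subset\Diff^k(\Bc)$, using hypothesis (SAH2) to maintain the codimension-$2$ estimate on the bad set within $\Hc$. Since $[0,1]$ is compact and the first $N+1$ eigenvalues depend continuously on the domain, they stay in a bounded window along $h$ and remain separated from $\lambda_{N+2}$. It therefore suffices to avoid coincidences among the first $N$ eigenvalues; for each pair $(i,j)$ with $1\leq i<j\leq N$, let $\Mc_{i,j}\subset\Hc$ denote the set of $g\in\Hc$ for which the Dirichlet Laplacian on $g(\Omega_0)$ satisfies $\lambda_i(g)=\lambda_j(g)$.

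The central step is to show that each $\Mc_{i,j}$ is locally a submanifold of $\Hc$ of codimension at least $2$. Near a point $h_0\in\Mc_{i,j}$, the restriction of $-\Delta$ to the corresponding two-dimensional eigenspace is a real symmetric $2\times 2$ matrix $M(g)$ with entries $a(g),b(g),c(g)$ depending smoothly on $g\in\Hc$. By the Hadamard variational formula, the derivatives of $a$, $c$, $b$ at $h_0$ along $T_{h_0}\Hc$ are, up to a multiplicative constant, exactly the three functionals appearing in (SAH2). Their linear independence implies that the map $g\mapsto (a(g)-c(g),\,b(g))$ is a submersion onto $\RR^2$, so $\Mc_{i,j}=\{a=c,\,b=0\}$ is locally a codimension-$2$ submanifold of $\Hc$.

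Next I would carry out a standard transversality argument on paths. After covering the compact image $h([0,1])$ by finitely many charts of $\Hc$, I construct a finite-parameter family of perturbations $F:[0,1]\times B\to \Hc$, with $B$ a small ball in a finite-dimensional space, such that $F(\cdot,0)=h$; each generator of the perturbation is multiplied by a bump function vanishing at $\tau=0$ and $\tau=1$, so $F(0,s)=h(0)$ and $F(1,s)=h(1)$ for all $s\in B$; and $F(\tau,\cdot)$ is a submersion onto a neighborhood of $h(\tau)$ in $\Hc$ for every $\tau\in(0,1)$. By Sard's theorem applied to the projection $F^{-1}(\Mc_{i,j})\to B$, and since $\Mc_{i,j}$ has codimension at least $2$ in $\Hc$ while the fibers of the projection $[0,1]\times B\to B$ are $1$-dimensional in $\tau$, the set of $s\in B$ for which $\tau\mapsto F(\tau,s)$ meets some $\Mc_{i,j}$ on $(0,1)$ has measure zero. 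Taking $s$ arbitrarily close to $0$ yields a path $g\in\Path^k([0,1],\Bc)$ with the prescribed endpoints, with simple spectrum among the first $N$ eigenvalues on $(0,1)$, and with $\|g-h\|_{\Cc^k}<\varepsilon$.

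The hard part is checking cleanly that this transversality construction stays inside $\Hc$ rather than merely in the ambient $\Diff^k(\Bc)$. This is exactly where the present theorem improves on Theorem \ref{th_Teytel}: (SAH2) is formulated intrinsically on $T_h\Hc$, so the submersivity of the matrix-valued map $g\mapsto M(g)$ persists after restriction to $\Hc$, and the abstract framework of \cite{Teytel} then applies verbatim to this constrained setting. A secondary subtlety is that $\Hc$ is only a Banach submanifold, so one must justify the finite-dimensional reduction via local coordinates before invoking Sard; this is routine given the surjectivity provided by (SAH2).
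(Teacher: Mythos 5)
Your proposal follows essentially the same two-step route as the paper: first derive from (SAH2) that the bad set $\Mc_{i,j}\subset\Hc$ where two of the first $N$ Dirichlet eigenvalues coincide has local codimension at least two (via the $2\times 2$ Rayleigh matrix and Hadamard's shape derivative, which is exactly how Teytel's three functionals arise), and then avoid it by a parametric transversality argument localised in a tubular neighbourhood of the compact path, with bump functions pinning the endpoints. The paper carries out the parametric step through a single explicit family $g(y)=h(t,y)+\gamma(t)\delta(y)$, with $\delta$ ranging over a hyperplane complementary to $\operatorname{span}(\partial_t h(t_0))$ and $\gamma$ a fixed cut-off, and invokes the meagre-image property of the resulting Fredholm (index~$1$) projection $g\mapsto\delta$ rather than your finite-dimensional Sard reduction; both are legitimate incarnations of parametric transversality, though the paper's form makes the $\Cc^k$-smallness of the corrected path immediate from one formula instead of from a chart-by-chart gluing. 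One wording slip to fix: a finite-parameter family $F(\tau,\cdot)$ cannot be ``a submersion onto a neighbourhood of $h(\tau)$ in $\Hc$'' when $\Hc$ is infinite-dimensional---what (SAH2) actually supplies, and what the Sard argument requires, is that $F(\tau,\cdot)$ be transversal to each codimension-$2$ set $\Mc_{i,j}$, i.e.\ that its derivative at $s=0$ surjects onto a two-dimensional complement of $T_{h(\tau)}\Mc_{i,j}$ whenever $h(\tau)$ lies on such a set.
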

\begin{proof}
All the arguments for proving both previous results are contained in \cite{Teytel}, but since the statements are different from the one of Teytel, we emphasize here some key points. First, Theorem 6.4 of \cite{Teytel} considers two domains $\Omega(0)$ and $\Omega(1)$ homotopic to the ball. This hypothesis is assumed to ensure that there exists at least a path connecting both domains. In our case, the existence of such a path is part of the hypotheses so we can be more general concerning the topology of these domains (this was already noticed in the erratum of \cite{Privat_Sigalotti}). 
Second, Theorem 6.4 of \cite{Teytel} does not consider a subclass $\Hc$ of domains and directly proves that (SAH2) is satisfied with respecto to the whole class of diffeomorphic domains. However, Assumption (SAH2) and the main result Theorem B of \cite{Teytel} are stated in a very general way including the possibility of few degrees of freedom. In Section 6 of \cite{Teytel}, Teytel considers the case of domain perturbations and computes (SAH2) as stated above. He also checks that it is satisfied when $\Hc$ is the whole class of deformations of the domain as in Theorem \ref{th_Teytel}. Notice that (SAH2) is obviously not satisfied for $\varphi_1$ and $\varphi_2$ supported in different part of the domain and this is why the connectedness requested in Theorem \ref{th_Teytel} above is mandatory.

We would also like to underline that the arguments of \cite{Teytel} are local ones and that is why we can state Theorems \ref{th_Teytel} and \ref{th_Teytel_bis} in a perturbative form. If (SAH2) is satisfied at some point $h\in\Hc$, then it yields local informations in a neighborhood of $h$ as it is classical when applying the transversality theorems, see for example 3.2 of \cite{Teytel}. Since in Theorem \ref{th_Teytel_bis} we aim at staying close to a compact path $\tau\in[0,1] \mapsto h(\tau)$ and since we only consider a finite number of eigenvalues, it is sufficient to check (SAH2) at each point $h(\tau)$ and to apply the arguments in a tubular neighborhood of the original path. 

It remains to emphasize that the path constructed in the proof of Theorem B of \cite{Teytel} is actually constructed as the perturbation of a first path. The original path of Teytel is piecewise linear and its difficult to control the derivatives of the constructed perturbation. To be complete, let us show how to adapt the local argument of Teytel to our purpose. Let $h(t)$ be a given path. We perturb it locally close to a time $t_0$. There exist a small $\tau>0$, a tubular neighborhood $\Tc\in\Diff^k(\Bc)$ of 
$\{h(t),\,t_0-\tau<t<t_0+\tau\}$ and a smooth function $\gamma\in\Cc^\infty([t_0-\tau,t_0+\tau])$, with $\gamma$ and all its derivative vanishing at $t_0\pm\tau$, such that the following holds. There is a hyperspace $\Dc\in\Diff^k(\Bc)$, complementary to span$(\partial_th(t_0))$, such that any function $g$ in $\Tc$ is uniquely represented by coordinates $(t,\delta)\in (t_0-\tau,t_0+\tau)\times\Dc$ via $g(y)=h(t,y)+\gamma(t)\delta(y)$. The function $g\in\Tc\mapsto \delta\in\Dc$ is a "nonlinear projection", that is a Fredholm map of index $1$. Due to (SAH2), the set of diffeomorphisms in $\Diff^k(\Bc)$ such that the Dirichlet Laplacian operator has multiple eigenvalues is of codimension at least 2. Thus, its projection by $g\in\Tc\mapsto \delta\in\Dc$ has a meager image. Thus, there exists $\delta$ as small as wanted such that, for all $t\in(t_0-\tau,t_0+\tau)$, the path $t\mapsto h(t)+\gamma(t)\delta$ avoids the diffeomorphisms providing multiple eigenvalues. We can repeat this local perturbation a finite number of times. It is sufficient to cover the whole time interval $[0,1]$ because the length $\tau$ is uniform with respect to the second time derivative of $h$, which is bounded by assumption.   
\end{proof}

\vspace{3mm}

We will also need domains without rational resonances in the spectrum. Actually, it is a generic property, as it can be proved by the techniques of Henry in \cite{Henry}. It is stated as a consequence of a much general result in \cite{Privat_Sigalotti}.
\begin{theorem}[\bf Corollary 8 of \cite{Privat_Sigalotti}]\label{th_irrational}
Let $\Bc$ be a closed ball and $\Omega_0$ a Lipschitz domain with $\overline\Omega_0\subset \mathring\Bc$. For any $k\geq 2$, there is a generic set of diffeomorphisms $h\in\Diff^k(\Bc)$ such that the Laplacian operator $-\Delta$ in $h(\Omega_0)$ has only simple eigenvalues $(\lambda_j)\subset\RR$ that are rationally independent. 
\end{theorem}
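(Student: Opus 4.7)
The plan is to exhibit the target set as a countable intersection of open dense subsets of $\Diff^k(\Bc)$. By Theorem \ref{th_simplicity} there is already a generic set $\mathcal{G}_0 \subset \Diff^k(\Bc)$ on which every Dirichlet eigenvalue $\lambda_j(h)$ of $-\Delta$ on $h(\Omega_0)$ is simple. On $\mathcal{G}_0$, each $\lambda_j$ depends continuously on $h$ and real-analytically along analytic paths of diffeomorphisms (Kato--Rellich perturbation theory, which applies since simplicity is open). For every nonzero tuple $\vec n = (n_1,\ldots,n_N) \in \ZZ^N \setminus \{0\}$, the set
$$\mathcal{G}_{\vec n} := \Big\{ h \in \mathcal{G}_0 \,:\, \sum_{j=1}^N n_j \lambda_j(h) \neq 0 \Big\}$$
is therefore open in $\mathcal{G}_0$. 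Since $\bigcup_{N\geq 1} (\ZZ^N \setminus \{0\})$ is countable, it is enough to prove that each $\mathcal{G}_{\vec n}$ is dense; the sought generic set is then $\mathcal{G}_0 \cap \bigcap_{\vec n} \mathcal{G}_{\vec n}$.

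Fix $\vec n$ and $h_0 \in \mathcal{G}_0$ with $\sum_j n_j \lambda_j(h_0) = 0$; I will find an arbitrarily small perturbation of $h_0$ lying in $\mathcal{G}_{\vec n}$. For any direction $g \in \Cc^k(\Bc,\RR^d)$ vanishing on $\partial\Bc$, the family $h_s := h_0 + sg$ lies in $\Diff^k(\Bc)$ for small $s$, and $F_g(s) := \sum_{j=1}^N n_j \lambda_j(h_s)$ is real-analytic in $s$. If $F_g \not\equiv 0$ for some admissible $g$, then $h_s \in \mathcal{G}_{\vec n}$ for arbitrarily small $s>0$ and we are done. Otherwise $F_g \equiv 0$ for every $g$; in particular $F_g'(0) = 0$. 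The classical Hadamard shape-derivative formula then yields
$$0 \;=\; F_g'(0) \;=\; -\int_{\partial h_0(\Omega_0)} \Phi(\sigma)\,\big\langle (h_0)_* g(\sigma),\,\nu(\sigma) \big\rangle \, d\sigma, \qquad \Phi := \sum_{j=1}^N n_j \left|\Dnu{\varphi_j}\right|^2,$$
where $\varphi_j$ is the normalized eigenfunction of $\lambda_j(h_0)$. Since $\overline{\Omega_0} \subset \mathring{\Bc}$, one can choose $g$ so that $\langle (h_0)_* g,\nu\rangle$ realizes any prescribed smooth function on $\partial h_0(\Omega_0)$; this forces $\Phi \equiv 0$ on the boundary.

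The main obstacle, and the heart of the argument, is to convert the identity $\Phi \equiv 0$ into the contradiction $\vec n = 0$ -- in other words, to prove $\ZZ$-linear independence of the boundary traces $|\partial_\nu \varphi_j|^2$ attached to distinct simple Dirichlet eigenvalues. This rigidity is precisely the content of Henry's boundary-perturbation machinery (Chapter~6 of \cite{Henry}; see also \cite{Privat_Sigalotti}): by successively computing the higher derivatives $F_g^{(k)}(0) = 0$ for $k=1,2,\ldots$ and combining Hadamard-type formulas with Green's identity on $h_0(\Omega_0)$, one produces a Vandermonde-type linear system in the coefficients $n_j$ whose nondegeneracy follows from the distinctness of the $\lambda_j$; the only solution is $n_1 = \cdots = n_N = 0$, contradicting the choice $\vec n \neq 0$. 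Density of every $\mathcal{G}_{\vec n}$ thus holds, and the Baire-type countable intersection provides the announced generic set of diffeomorphisms with simple and $\QQ$-independent spectrum.
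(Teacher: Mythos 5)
The paper offers no proof of Theorem \ref{th_irrational} at all: it is stated verbatim as a citation of Corollary~8 of \cite{Privat_Sigalotti}, preceded only by the remark that the result ``can be proved by the techniques of Henry''. So there is no in-paper argument with which to compare your proposal; what can be assessed is whether your sketch would stand on its own.

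The Baire-category scaffolding (work inside $\mathcal{G}_0$ from Theorem \ref{th_simplicity}, form the countable family $\mathcal{G}_{\vec n}$ indexed by integer tuples, note openness via Kato--Rellich continuity of simple eigenvalues, reduce to density) is correct and is the standard organization. The first Hadamard shape-derivative formula and the observation that ``$F_g'(0)=0$ for all admissible $g$'' forces $\Phi:=\sum_j n_j\,|\partial_\nu\varphi_j|^2\equiv 0$ on $\partial h_0(\Omega_0)$ is also correct (the boundary displacement $\langle (h_0)_*g,\nu\rangle$ can be made an arbitrary smooth function precisely because $\overline\Omega_0\subset\mathring\Bc$). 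Up to this point the reasoning is sound.

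The gap is in the last paragraph, and it is a genuine one. You frame the remaining step as ``proving $\ZZ$-linear independence of the boundary traces $|\partial_\nu\varphi_j|^2$ attached to distinct simple Dirichlet eigenvalues''. As stated, this claim is false: linear independence of these traces is a \emph{generic} property, not one that follows from simplicity of the spectrum alone, and in fact this generic linear independence is exactly the main theorem of \cite{Privat_Sigalotti} that Theorem~\ref{th_irrational} is deduced from. So invoking it as a known fixed-domain ``rigidity'' reverses the logical order. What you would actually need is the weaker but still nontrivial implication ``$F_g\equiv 0$ for all admissible $g$ at $h_0$ implies $\vec n=0$'', and your sketch of that step (``successively computing higher derivatives \ldots one produces a Vandermonde-type linear system \ldots whose nondegeneracy follows from the distinctness of the $\lambda_j$'') is an assertion, not an argument. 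The higher-order Hadamard formulas are not pure power sums of the $\lambda_j$: they involve resolvent/Green-function cross-terms mixing the directions $g$, the eigenfunctions, and the eigenvalues in a way that does not obviously collapse to a Vandermonde matrix. This is precisely where the hard work of \cite{Henry} and \cite{Privat_Sigalotti} lies, and without carrying it out (or correctly reducing to their main theorem on linear independence of $\varphi_j^2$) the density of $\mathcal{G}_{\vec n}$ is not established.
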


\subsection{Singular convergence}
In this section, we consider the case of singular convergence of domains. For all $\eta\in[0,1]$, let $(\Omega^\eta)\subset\RR^d$ be bounded domains with Lipschitz boundaries. Let $(\lambda^\eta_j)_{j\in\NN^*}$ be the eigenvalues of the corresponding Dirichlet Laplacian operators in $\Omega^\eta$, ordered and counted by multiplicity. For $\lambda\not\in\{\lambda^\eta_j\}$, we denote by $R^\eta(\lambda)\in\Lc(L^\infty(\RR^d))$ the corresponding resolvent operator defined as follows. Any function $f\in L^\infty(\RR^d)$ is first truncated inside $\Omega^\eta$, then we apply the classical resolvent $(\lambda - \Delta)^{-1}$ to obtain a function in $L^\infty(\Omega^\eta)$, which is extended by zero to go back to $L^\infty(\RR)^d$ afterwards. This extension enables to compare resolvent in a space independent of $\eta$ and it is sufficient to obtain the convergence of the spectrum.

Arendt and Daners show in \cite{Arendt-Daners} and \cite{Daners} the following result.
\begin{theorem}[\bf Theorem 5.10 and Section 7 of \cite{Arendt-Daners} and Theorem 7.5 of \cite{Daners}]\label{th_cv_spectrum}
Assume that for all compact $K\subset \Omega^0$, there is $\eta_0>0$ such that for all $\eta\in (0,\eta_0)$, $K\subset \Omega^\eta$. Assume the same for the exteriors: for all compact $K\subset \RR^d\setminus\Omega^0$, there is $\eta_0>0$ such that for all $\eta\in (0,\eta_0)$, $K\subset \RR^d\setminus\Omega^\eta$. 

Then, the spectrum of the Dirichlet Laplacian operators converges when $\eta$ goes to zero in the following sense:
\begin{enum2}
\item for all $j\geq 1$, $\lambda^\eta_j\longrightarrow \lambda^0_j$ when $\eta\longrightarrow 0$.
\item For all $\lambda\not\in\{\lambda^0_j\}$, $R^\eta(\lambda)$ is well defined for $\eta$ small enough and $R^\eta(\lambda)$ converges to $R^0(\lambda)$ in $\Lc(L^\infty(\RR^d))$. In particular, the spectral projectors of the Dirichlet Laplacian operators converge when $\eta$ goes to zero. If $\lambda^0_j$ is a simple eigenvalue with an eigenfunction $\varphi^0_j$, then there exist eigenfunctions $\varphi^\eta_j$ such that $\varphi^\eta_j\longrightarrow \varphi^0_j$ in $H^1_0(\RR^d)$ when $\eta\longrightarrow 0$.
\end{enum2}
\end{theorem}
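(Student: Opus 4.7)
The plan is to reduce the statement to a Mosco-type convergence of quadratic forms and then transfer the convergence from the $L^2$ setting to the $L^\infty$ setting by elliptic regularity. Throughout, I identify $H^1_0(\Omega^\eta)$ with its image under extension by zero inside $H^1_0(\RR^d)$ (or $H^1(\RR^d)$), so that all forms live in the same ambient Hilbert space $L^2(\RR^d)$, with quadratic form $a^\eta(u) = \int_{\RR^d} |\nabla u|^2$ and form domain $H^1_0(\Omega^\eta)$.

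The main step is to establish Mosco convergence of the forms $a^\eta$ to $a^0$, namely: (M1) if $u_\eta \in H^1_0(\Omega^\eta)$ converges weakly in $H^1(\RR^d)$ to some $u$, then $u \in H^1_0(\Omega^0)$ and $a^0(u) \leq \liminf a^\eta(u_\eta)$; and (M2) for every $u \in H^1_0(\Omega^0)$, there exist $u_\eta \in H^1_0(\Omega^\eta)$ with $u_\eta \to u$ strongly in $H^1(\RR^d)$. The lim-inf inequality (M1) is automatic from weak lower semicontinuity of the Dirichlet energy; the nontrivial point is that the limit $u$ is supported in $\overline{\Omega^0}$, which uses precisely the second geometric assumption: any point $x \notin \overline{\Omega^0}$ has a compact neighborhood eventually contained in $\RR^d \setminus \Omega^\eta$, hence $u_\eta \equiv 0$ there for small $\eta$. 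For the recovery sequence (M2), one first approximates $u$ by $\varphi \in C^\infty_c(\Omega^0)$ in $H^1$-norm, and then observes that $\mathrm{supp}\,\varphi$ is a compact subset of $\Omega^0$, so by the first geometric assumption $\varphi \in C^\infty_c(\Omega^\eta) \subset H^1_0(\Omega^\eta)$ for all sufficiently small $\eta$; a diagonal argument produces the desired sequence.

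Once Mosco convergence is established, a classical theorem (Kato-Mosco-Simon) yields strong resolvent convergence of the associated self-adjoint operators $-\Delta^\eta$ acting in $L^2(\RR^d)$ (with the operators defined to be zero on $L^2(\RR^d \setminus \Omega^\eta)$). Because all $\Omega^\eta$ are contained in a fixed bounded set, the resolvents $R^\eta(\lambda)$ are uniformly compact on $L^2$ (Rellich), so strong resolvent convergence upgrades to norm convergence in $\Lc(L^2(\RR^d))$ on any closed contour avoiding the spectra. Inserting this into the Riesz formula $P^\eta = \frac{1}{2\pi \ii} \oint_\Gamma R^\eta(z)\,\dd z$ around each isolated eigenvalue $\lambda^0_j$ gives convergence of eigenvalues $\lambda^\eta_j \to \lambda^0_j$ with multiplicity and convergence of spectral projectors in operator norm; for a simple eigenvalue $\lambda^0_j$, choosing the phase appropriately in $P^\eta \varphi^0_j$ produces eigenfunctions $\varphi^\eta_j \to \varphi^0_j$ in $L^2$, and the strong $H^1_0$ convergence then follows by testing the eigenvalue equation and using $\|\nabla \varphi^\eta_j\|^2_{L^2} = \lambda^\eta_j \to \lambda^0_j = \|\nabla \varphi^0_j\|_{L^2}^2$ together with weak convergence.

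The last step, upgrading resolvent convergence from $\Lc(L^2)$ to $\Lc(L^\infty(\RR^d))$, is where the argument is most delicate and where I expect the main obstacle. The idea is to use uniform De Giorgi–Nash–Moser bounds: for $f \in L^\infty(\RR^d)$ and $u_\eta = R^\eta(\lambda) f$, the function $u_\eta$ solves $(\lambda - \Delta) u_\eta = f|_{\Omega^\eta}$ in $\Omega^\eta$ with zero Dirichlet data, so $\|u_\eta\|_{L^\infty(\RR^d)} \leq C \|f\|_{L^\infty(\RR^d)}$ with $C$ independent of $\eta$ (using the uniform bound on $\mathrm{diam}(\Omega^\eta)$). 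Interpolating between the uniform $L^\infty$ bound and the $L^2$ convergence obtained above (e.g.\ via $\|u_\eta - u_0\|_{L^\infty}^{1+d/2} \lesssim \|u_\eta - u_0\|_{L^2} \cdot \|u_\eta - u_0\|_{C^\alpha}^{d/2}$ using interior Hölder regularity) then gives convergence in $\Lc(L^\infty)$. The technical subtlety is controlling the behaviour near the boundaries, which is precisely what forces the author of \cite{Arendt-Daners,Daners} to insert the truncation/extension convention in the definition of $R^\eta(\lambda)$; the two geometric assumptions are exactly what guarantee that no $L^\infty$ mass can concentrate along the moving boundary in the limit.
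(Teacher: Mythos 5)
This statement is cited in the paper (attributed to Theorem 5.10 and Section 7 of Arendt--Daners and Theorem 7.5 of Daners) rather than proved, so there is no ``paper proof'' to compare against; what follows is an assessment of your reconstruction on its own merits.

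Your overall architecture --- Mosco convergence of the Dirichlet forms, transfer to strong resolvent convergence on $L^2$, upgrade to norm resolvent convergence via collective compactness, and Riesz projections for the spectral statements --- is the right skeleton for the $L^2$ part, and is close in spirit to Daners' argument. However there are two genuine gaps that you should not gloss over.

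First, in step (M1) you argue that the weak limit $u$ is supported in $\overline{\Omega^0}$ and then conclude $u\in H^1_0(\Omega^0)$. These two things are \emph{not} equivalent for an arbitrary open set: the identity $H^1_0(\Omega^0)=\{u\in H^1(\RR^d):u=0 \text{ a.e. outside } \Omega^0\}$ is precisely Keldy\v{s} stability of $\Omega^0$, which fails in general (e.g.\ if $\partial\Omega^0$ has an inner crack of positive capacity). It \emph{does} hold when $\Omega^0$ is a bounded Lipschitz domain, which the paper assumes; but you never invoke this, and without it the Mosco limsup condition is simply false. The hypothesis on $\Omega^0$ is not decorative --- it is exactly what makes your (M1) true.

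Second, and more seriously, the upgrade from $\Lc(L^2)$ to $\Lc(L^\infty(\RR^d))$ is the heart of the theorem, and your sketch does not close it. Your interpolation $\|u\|_{L^\infty}^{1+d/2}\lesssim\|u\|_{L^2}\|u\|_{C^\alpha}^{d/2}$ is arithmetically fine, but it requires a \emph{uniform} Hölder bound on $R^\eta(\lambda)f$ up to the boundary $\partial\Omega^\eta$. Interior De Giorgi--Nash estimates give such bounds only on sets compactly contained in $\Omega^\eta$; near $\partial\Omega^\eta$ the Hölder seminorm depends on the regularity of $\partial\Omega^\eta$ (exterior cone/Lipschitz constants), and nothing in the hypotheses makes these constants uniform in $\eta$. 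In the very application the paper has in mind --- a dumbbell with a channel of width $\eta\to0$ --- the boundary regularity constants do in fact degenerate (or at least would have to be controlled by a separate geometric argument), so the interpolation estimate as stated is unjustified. You acknowledge the difficulty in your last sentence but the sentence is a description of the problem, not a proof. The actual mechanism used by Arendt and Daners is different in kind: positivity and domination of the Dirichlet heat semigroups by a fixed ultracontractive semigroup (Gaussian kernel bounds on a reference domain), combined with a Trotter--Kato-type convergence result for positive semigroups on $L^\infty$-spaces. That route exploits the order structure to handle the boundary layer and does not require any uniform-in-$\eta$ boundary regularity. If you want to keep your elliptic-estimate strategy, you would need at minimum a barrier/maximum-principle argument showing that $|R^\eta(\lambda)f|$ is uniformly small in a neighbourhood of $\partial\Omega^0$, and this is not automatic from the two compact-exhaustion assumptions alone.

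One smaller point: ``strong resolvent convergence + uniform compactness $\Rightarrow$ norm resolvent convergence'' is true here, but the correct hypothesis is \emph{collective} compactness, i.e.\ $\bigcup_\eta R^\eta(\lambda)(B_{L^2})$ precompact; this follows because all resolvents map into a bounded ball of $H^1_0(\Bc)$ for a fixed bounded $\Bc\supset\bigcup_\eta\Omega^\eta$. Your phrasing makes it sound like pointwise compactness of each $R^\eta$ suffices, which it does not.
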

For further details, we refer to Theorem 5.10 and Section 7 of \cite{Arendt-Daners}, and Theorem 7.5 of \cite{Daners}  (see also \cite{Arrieta}). We notice that the domains considered in this paper are ``strongly regular'' in the sense of \cite{Arendt-Daners} because they have Lipschitz boundaries. We intend to use Theorem \ref{th_cv_spectrum} in the case of  dumbbell shaped domains, which is a very classical example.

\section{Proof of Theorem \ref{th_main}}\label{section_proof}

\subsection{Preliminaries}\label{section_prelim}
A first important remark is that, since the flow is unitary, the smallness of the errors in $L^2(\Omega(t))$ is preserved by the flow for all $t'>t$. Thus, we may simply count the accumulated errors at each time that an approximation is made, without wondering what happens to the neglected term in the future.  

Let $\Omega_0\subset\RR^d$ be the reference domain of Theorem \ref{th_main} and $\Bc$ a large ball containing it. 
Let $u_0$ and $u_1$ respectively be the starting and aimed states in $L^2(\Omega_0)$. Let $\varepsilon>0$ be the accepted error. Using 
the generic simplicity stated in Theorem \ref{th_simplicity}, we can find a homotopic domain $\Omega_0'$ in which 
the associated Dirichlet Laplacian operator has a simple spectrum with a Hilbert basis of eigenfunctions 
$(\varphi_j)_{j\geq 1}$. Let $h\in \Path^k([0,1],\Bc)$ be such that 
$h(0,\Omega_0)=\Omega_0$ and $h(1,\Omega_0)=\Omega_0'$. 

We notice that if $u(t)$ is solution of 
the Schr\"odinger equation \eqref{SE} in $\Omega(t)=h(t,\Omega_0)$ with $u^i=u(t=0)$ and $u^f=u(t=1)$, then $v(t)=\overline u (1-t)$ is solution of the same equation \eqref{SE} in $\Omega(1-t)$ with initial data $v(t=0)=\overline u^f$ and endpoint $v(t=1)=\overline u^i$. This time reversibility of the Schr\"odinger equation allows to define the solutions of \eqref{SE} when we ``reverse'' the deformation of the domain. Notice that the roles of the initial and final states are swapped, up to conjugation.

Let $u_0(t)$ be the solution of \eqref{SE} in $\Omega(t)=h(t,\Omega_0)$ with initial data $u(t=0)=u_0$ and set $u'_0=u_0(t=1)$. Let $v(t)$ be another solution of the Schr\"odinger equation \eqref{SE} in $\Omega(t)$ with initial data $v(t=0)=\overline {u_1}$ and set $v'=v(t=1)$. Thanks to the time reversibility of the equation, $u_1(t):=\overline v(1-t)$ is the solution of \eqref{SE} in $\Omega(1-t)$ steering $u'_1:=\overline v'$ in $u_1$. 

There exist $N\in\NN$ and some coefficients $(c_j)_{j=1\ldots N}$ and $(d_j)_{j=1\ldots N}$ such that 
$$\sum_{j=1}^N |c_j|^2=\sum_{j=1}^N |d_j|^2~~,~~~~\Big\|u'_0 - \sum_{j=1}^N c_j\varphi_j\Big\|_{L^2(\Omega'_0)}\leq \frac \varepsilon 4 ~~~~\text{ and }~~~~\Big\|u'_1 - \sum_{j=1}^N d_j\varphi_j\Big\|_{L^2(\Omega'_0)}\leq \frac \varepsilon 4,$$
where we use $\|u'_0\|=\|u'_1\|$ because the flow given by Theorem \ref{th_Cauchy} is unitary and $\|u_0\|=\|u_1\|$ by assumption. Assume that the following claim holds.

\medskip
\begin{claim}\label{claim_main}
Let $N\in\NN$ and $\varepsilon>0$ be given and let $A=\big(\sum_{j=1}^N |b_j|^2\big)^{1/2}$. Then there exist $T>0$ and a path $h'(t)\in\Path^k([0,T],\Bc)$ such that the following holds. The motion is a loop in the sense 
that $h'(0)=h'(T)=\id$. Moreover, if $u'(t)$ is the solution of the Schr\"odinger equation \eqref{SE} in 
$\Omega'(t)=h'(t,\Omega'_0)$ with initial data $u'(0)=A \varphi_1$, then 
$$\Big\|u'(T) - \sum_{j=1}^N b_j\varphi_j\Big\|_{L^2(\Omega'_0)}\leq \frac \varepsilon 4~.$$
\end{claim}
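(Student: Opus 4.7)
I follow the three-step picture from the introduction: (A) grow a thin-necked dumbbell whose bulb $\Omega^L$ is much larger than $\Omega^R \approx \Omega'_0$, (B) contract $\Omega^L$ to sequentially drive the state through $N$ avoided crossings, depositing the prescribed amplitudes $b_1,\dots,b_N$ onto the target modes via tuned Landau--Zener transitions, and (C) adiabatically reconnect back to $\Omega'_0$ while keeping the first $N$ eigenvalues simple. Because the flow is unitary, errors incurred in each phase simply add in $L^2$, so it suffices to ensure that each phase contributes at most a fraction of $\varepsilon/4$.

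\textbf{Phase A: growing the dumbbell.} Fix a channel width $\eta>0$ and an auxiliary ball $\Omega^L$, and attach it to $\Omega'_0$ through a thin neck $\omega^\eta$, obtaining a dumbbell $\Omega^\eta = \Omega^L \cup \omega^\eta \cup \Omega^R$ where $\Omega^R$ differs from $\Omega'_0$ only by a tiny perturbation near the neck. By Theorem~\ref{th_cv_spectrum}, as $\eta\to 0$ the first $2N+2$ eigenvalues of the Dirichlet Laplacian on $\Omega^\eta$ converge to those of the disjoint union $\Omega^L \sqcup \Omega^R$, and each corresponding eigenfunction is close in $H^1$ either to an $L$-localized or an $R$-localized mode. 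Choosing $\Omega^L$ large enough that its ground state lies below $\lambda_1^R$, the ground state of $\Omega^\eta$ is $L$-localized, while the first $N$ $R$-localized modes $\varphi_j^R(\eta)$ are close in $L^2$ to the target $\varphi_1,\dots,\varphi_N$. Starting from $u'(0)=A\varphi_1$, I design a slow $\Cc^k$ path that grows $\Omega^L$ from a small bump to its full size; Theorem~\ref{th_Teytel_bis} allows me to perturb this path so that simplicity of the relevant part of the spectrum is preserved throughout, and Proposition~\ref{prop_adiabatic} then guarantees that the state arrives within a small error of $A\Psi_1^\eta$, the $L$-localized ground state of $\Omega^\eta$.

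\textbf{Phase B: sequential Landau--Zener transfers.} I now adiabatically contract $\Omega^L$. Its lowest eigenvalue $\lambda_L(\tau)$ rises monotonically and successively crosses the (essentially fixed) $R$-localized eigenvalues $\lambda_1^R,\dots,\lambda_N^R$. Each crossing is in fact an avoided crossing with a gap $\Delta_k$ exponentially small in $1/\eta$ but strictly positive. Away from crossings, adiabaticity decouples $L$- from $R$-localized dynamics; near the $k$-th crossing, projecting the evolution onto the two almost-degenerate eigenvectors reduces the dynamics, up to errors controlled by the adiabatic theorem applied in the orthogonal complement, to a non-autonomous $2\times 2$ Landau--Zener system. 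By tuning the local speed $v_k$ of the deformation so that $\Delta_k^2/(v_k|\partial_\tau(\lambda_L-\lambda_k^R)|)$ takes a prescribed value, the transfer probability can be driven to any target in $(0,1)$, so I can deposit exactly the $L^2$-mass $|b_k|^2$ into $\varphi_k^R(\eta)$. Between consecutive crossings I insert short stationary segments on which the eigenvalues accumulate phase at different rates; this lets me tune the relative phases between the residual $L$-amplitude and the already-deposited $R$-amplitudes to match the $\arg(b_k)$. After $N$ tuned crossings, the surviving $L$-amplitude is arbitrarily small and the state is close to $\sum_{j=1}^N b_j \varphi_j^R(\eta)$.

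\textbf{Phase C and main obstacle.} Finally I adiabatically shrink $\Omega^L$ and $\omega^\eta$ back to nothing, returning the domain to $\Omega'_0$; by a last application of Theorem~\ref{th_Teytel_bis} the first $N$ eigenvalues stay simple along this retraction, and Proposition~\ref{prop_adiabatic} maps $\sum b_j \varphi_j^R(\eta)$ to a state within a controlled error of $\sum b_j \varphi_j$ in $L^2(\Omega'_0)$. Choosing $\eta$ small, $\Omega^L$ large, and the global deformation slow enough (with the tuned speed bursts near the crossings), the summed error is below $\varepsilon/4$. The crux of the argument is Phase~B: making the Landau--Zener reduction rigorous at the PDE level with explicit quantitative bounds. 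The avoided-crossing gap is exponentially small in the geometric parameters, so the near-adiabaticity window is narrow and the velocity profile must be tuned with matching precision, while the residual coupling to all other modes must be controlled via an adiabatic estimate on the orthogonal projector. I expect this step to require, in order: precise spectral estimates on $\Omega^\eta$ to lower-bound each $\Delta_k$, a two-mode projection with quantitative error bounds deduced from Propositions~\ref{prop_H1bound}--\ref{prop_difference}, and a standard Landau--Zener analysis for the resulting $2\times 2$ non-autonomous system.
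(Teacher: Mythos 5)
Your overall architecture (grow a dumbbell, distribute the amplitude at $N$ avoided crossings, reconnect) matches the paper, but the core engine you propose for the distribution step is substantively different from the paper's and, as you yourself note, is left unfinished. The paper does not perform any Landau--Zener analysis. Its key Lemma~\ref{lemma_step} handles a \emph{single} avoided crossing by a much softer mechanism: over a very short time window $[-\tau,\tau]$ during which the bulb is barely dilated, Proposition~\ref{prop_difference} shows the \emph{solution} $u(t)$ stays within $O(\sqrt{\tau})$ of its initial value in $L^2$, while the two eigenvectors $\varphi_K(t),\varphi_{K+1}(t)$ spanning the $L$-/$R$-crossing plane rotate all the way from $\{\varphi^L,\varphi^R_K\}$ to $\{\varphi^R_K,\varphi^L\}$. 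One then invokes the intermediate value theorem to pick a stopping time $t_1$ at which $|\langle\varphi_K(t_1)|\varphi^L\rangle|=\alpha$ and reads the essentially frozen state in the rotated basis $\psi_K,\psi_{K+1}$ with the desired weights. There is no passage \emph{through} the crossing, no gap estimate, no two-level reduction, and therefore no need for the quantitative Landau--Zener machinery you flag as the ``crux''. Your Phase~B, by contrast, explicitly requires (i) a lower bound on each exponentially small gap $\Delta_k$, (ii) a quantitative two-mode reduction of the full PDE dynamics, and (iii) a non-autonomous Landau--Zener analysis, none of which you supply. That is a genuine gap, and it is precisely the technical wall the paper was designed to avoid.

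Two further issues with your Phase~B. First, you contract a \emph{single} large bulb $\Omega^L$ past $N$ crossings in one sweep. But all eigenvalues $\lambda^L_1<\lambda^L_2<\dots$ of the bulb rise together as it shrinks, so while $\lambda^L_1$ is traversing $\lambda^R_1,\dots,\lambda^R_N$, the higher $\lambda^L_j$ may also cross the same $R$-levels and can drain amplitude already deposited in $\varphi^R_k$ back into $L$-localized modes. You do not address how to order or neutralize these secondary resonances. The paper sidesteps this entirely by re-manufacturing a \emph{fresh} dumbbell for each step (with $\Omega^L$ sized so that $\lambda_1^L=\lambda_K^R$ exactly at the central time) and by moving only slightly around that one engineered crossing; between steps it travels adiabatically with simple spectrum via Lemma~\ref{lemma_adiab} and Theorem~\ref{th_Teytel}. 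Second, you propose to correct phases by inserting stationary dwell intervals between crossings. This is plausible but again would require controlling how the $L$-amplitude's phase co-evolves with each freshly deposited $R$-amplitude; the paper instead defers \emph{all} phase corrections to a single final Lemma~\ref{lemma_phases}, where the domain is nudged to one with rationally independent eigenvalues and the density of the linear flow on $\TT^N$ is used. If you want to salvage the Landau--Zener route, note that you would in any case need an intermediate-value argument over the velocity parameter because $\Delta_k$ is not explicitly known; at that point you are effectively rediscovering the paper's soft argument, and you might as well adopt it directly.
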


Denote by $h'_0$ and $h'_1$ the deformations driving $A\varphi_1$ to $\sum_{j=1}^N \overline{c_j}\varphi_j$ and 
$\sum_{j=1}^N d_j\varphi_j$ respectively, up to an error $\varepsilon/4$. We concatenate $h(t)$, 
$h'_0(T-t)\circ h(1)$, $h'_1(t)\circ h(1)$ and $h(1-t)$ to obtain, thanks to the time reversibility, a motion of the domains steering approximately $u_0$ in $u_1$. Indeed, this deformation drives $u_0$ successively to $u'_0$ which is close to 
$\sum_{j=1}^N c_j\varphi_j$ (up to an error $\varepsilon/4$), then to $\overline{A\varphi}=A\varphi_1$ (up to an error 
$\varepsilon/2$), then to $\sum_{j=1}^N d_j\varphi_j$ (up to an error $3\varepsilon/4$) which is close to $u'_1$ 
(up to an error $\varepsilon$), and finally to $u_1$ (up to an error $\varepsilon$).

To summarize, these preliminaries reduce the whole proof of Theorem \ref{th_main} to the above claim (which is a particular case of Theorem \ref{th_main}). Proving Claim \ref{claim_main} is the purpose of the remaining part of this section.

\subsection{Sketch of the global strategy}\label{section_strategy}
One of the main idea of our strategy was introduced in \cite{Dmitry}. Assume that the spectrum of our operator splits into two separated parts that belong to two separated subspaces of the phase space: domain with two disconnected parts, separation between even and odd eigenfunctions, between states not depending on $x_1$ and states not depending on $x_2$\ldots{} Then, when we make adiabatic motions, the distribution of energy follows the curves of eigenvalues, even when eigenvalues of one part cross eigenvalues of the other part, due to their independence. Following this idea, we can shuffle the energy carried by the eigenfunctions and drive a state $\sum_{j=1}^N b_j\varphi_j$ to another state $\sum_{j=1}^N b_{\sigma(j)}\varphi_j$ where $\sigma$ is a permutation of the indices. 

In \cite{one-D}, we studied the simple situation of the Schr\"odinger equation on $[0,1]$ with a potential $V(x)$. When $V(x)$ is a very high and localized wall, the segment is almost split into two parts, but not perfectly. We showed that the idea of \cite{Dmitry} can still be used, up to carefully avoiding the tunneling effect when both part of the segment have a resonance. Moreover, a new observation was made in comparison with \cite{Dmitry}: we showed that one, in fact, can use this tunneling effect to distribute the energy between two eigenmodes when they (almost) cross. 

We use here the same strategy. We intend to create a situation where the spectrum of the Dirichlet Laplacian operator on the domains $\Omega(t)$ behaves in Figure \ref{fig-spectrum}. We separate the spectrum between two parts, the left and right ones. The right eigenvalues $\lambda^R_j$ are almost constant. The lowest left eigenvalue $\lambda_1^L$ corresponds to the ground state $A\varphi_1$ at $t=0$ which is our starting state. Then, we deform the domain to increase the eigenvalue $\lambda_0^L$, see Figure \ref{fig-spectrum}. If we do this in a adiabatic way, the distribution of energy is not modified (see Lemma \ref{lemma_adiab} below). But when $\lambda_1^L$ meets $\lambda^R_1$, a tunneling effect appears and, by tuning the speed with which the domain boundary moves, we are able to distribute the energy between the levels $\lambda_1^L$ and $\lambda^R_1$, following the method of \cite{one-D}. This is the key argument of the proof of Theorem \ref{th_main} which is ensured in Lemma \ref{lemma_step} below. We continue to increase the left part of the spectrum until $\lambda^L_1$ has crossed all the desired levels $\lambda^R_j$ to distribute the energy as in our aimed state $$u_1=\sum_{j=1}^N a_j\varphi_j.$$

\begin{figure}[ht]
\begin{center}
\resizebox{0.6\textwidth}{!}{\input{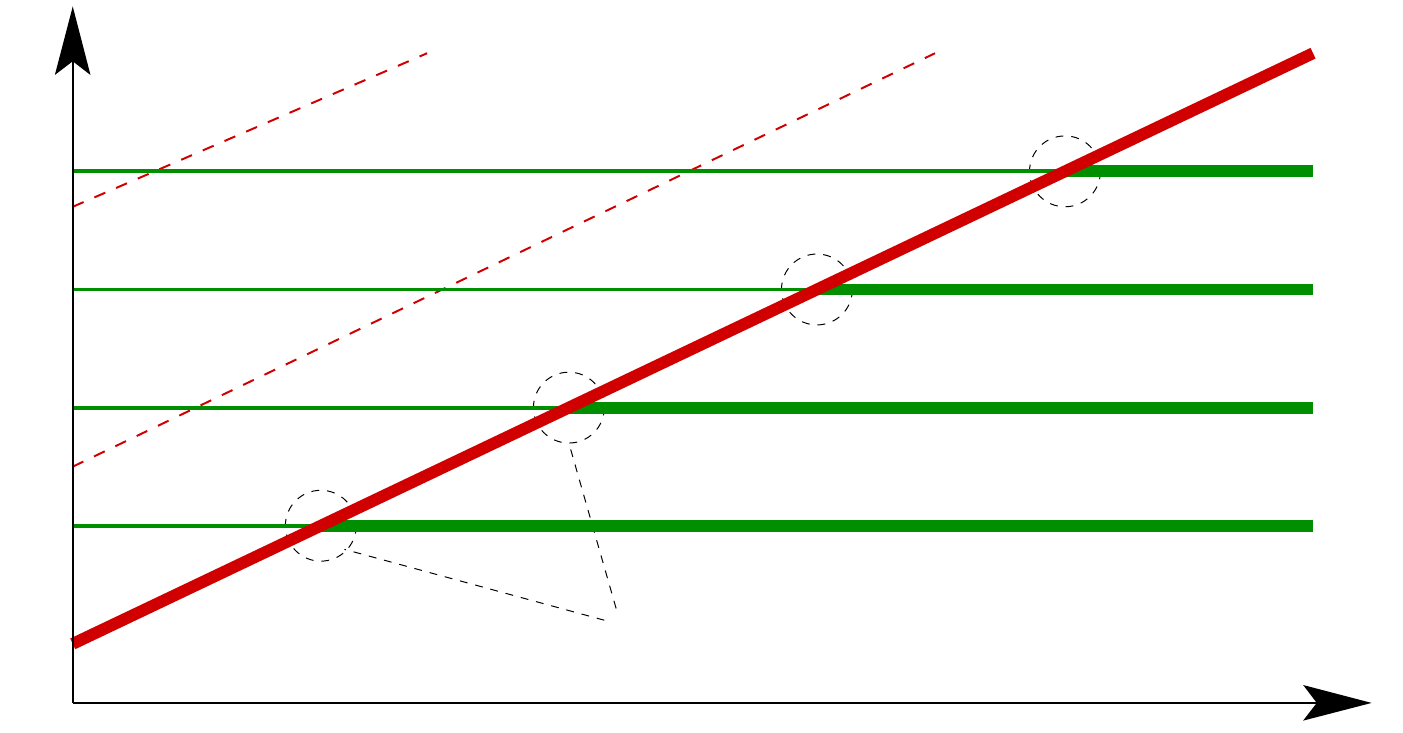_t}}
\end{center}
\caption{\it The ``ideal'' behavior of the spectrum during our control process. The eigenvalues are split between two groups $\lambda^L_j$ and $\lambda^R_j$. We deform the domain to ensure that the eigenvalue $\lambda^L_1$ cross a suitable number of eigenvalues $\lambda_j^R$ of the other group. At each crossing point, an accurate use of the tunneling effect enables to distribute the desired part of the energy carried by $\lambda^L_1$ to $\lambda^R_j$. The actual deformation of the domain used in our proof mimics this ``ideal'' situation, except that the crossing of the eigenvalues will be broken into an ``almost crossing'', which still yields a tunneling effect.} \label{fig-spectrum} 
\end{figure}

Indeed, in our proof, we will not be able to reproduce the ``ideal'' situation of Figure \ref{fig-spectrum}. To perform the strategy above, we approximate Figure \ref{fig-spectrum} via the classical dumbbell shaped domains. Other frameworks are possible (see Section \ref{rectangle} for example), but the dumbbell shaped domains yield a simple and general proof. By assumption, our initial reference domain $\Omega_0$ provides a simple spectrum. This domain, denoted by $\Omega^R$, corresponds to the right part of the spectrum $(\lambda^R_j)_{j\in\NN^*}$. From a smooth part of the boundary, we grow an attached ball $\Omega^L$ linked with a thin channel $\omega^\eta$, see Figure \ref{fig-domain}. The spectrum of the Dirichlet Laplacian $-\Delta$ on this ball is denoted $(\lambda^L_j)_{j\in\NN^*}$. The interest of this dumbbell shaped domain is that, if the channel is very thin, then the spectrum of the whole domain can be approximated by $(\lambda^L_j)_{j\in\NN^*}\cup(\lambda^R_j)_{j\in\NN^*}$. During this deformation, if we move adiabatically as in Proposition \ref{prop_adiabatic} and if $\Omega^L$ is sufficiently large, then the ground state of the initial domain is  mainly supported by the ball $\Omega^L$, because $\lambda_1^L$ is the lowest eigenvalue.

Afterwards, we reduce the size of the ball $\Omega^L$ in order to obtain the behavior of the spectrum as described above (Figure \ref{fig-spectrum}). We use the tunneling effect when the ground energy level $\lambda^L_1$ 
of the left ball is equal to an eigenvalue $\lambda^R_k$ of the right domain. In these resonant moments, it is then possible to distribute a part of the energy contained in the left ball to $k-$th energy level of the right part. Notice that, for the actual eigenvalues of the Laplacian operator in the dumbbell domain with the thin channel, the crossing of the ``ideal'' eigenvalues $\lambda^L_1$ and $\lambda^R_k$ becomes an ``almost crossing'' because the exact crossing is not a generic situation (see Section \ref{section_simplicity}). However, they will be close enough to observe a tunneling effect.

\subsection{The basic step: distribution of energy during the (almost) crossing of eigenvalues}
The purpose of this section is to rigorously obtain the key step of the strategy described in Section \ref{section_strategy}: the distribution of the energy via the tunneling effect.
We make precise in the following lemma the situation of Figure \ref{fig-domain} and analyze the evolution of the state during the deformation. In what follows, $\Omega_0$ denotes the starting domain of Theorem \ref{th_main} and $\Bc$ is a large ball containing it.
\begin{lemma}\label{lemma_step}
Let $c_j\in\RR$, $j=1,\ldots,K$ be given, let $\alpha\in [0,1]$ and let $\delta>0$. There exist a time interval $I=[t_0,t_1]$ and a family of diffeomorphisms $h(t)\in\Path^3(I,\Bc)$ such that the following holds. 

The spectrum of the Dirichlet Laplacian operator is simple on $\Omega(t_0)=h(t_0,\Omega_0)$ and $\Omega(t_1)=h(t_1,\Omega_0)$ respectively. Denote by $(\varphi_j)_{j\in\NN^*}$ and $(\psi_j)_{j\in\NN^*}$ two respective Hilbert bases of eigenfunctions in these domains. Let $u(t)$ be the solution of the Schr\"odinger equation on the moving domain $\Omega(t)=h(t,\Omega_0)$ with initial data $u(t_0)=\sum_{j=1}^K c_j\varphi_j$. Then at time $t_1$, there exist phase shifts $\theta_j\in\RR$ such that
$$\left\|u(t_1)-\left(\alpha e^{i\theta_K}c_K\psi_K+\sqrt{1-\alpha^2}e^{i\theta_{K+1}}c_K\psi_{K+1} + \sum_{j=1}^{K-1}e^{i\theta_j}c_j \psi_j \right)\right\|_{L^2(\Omega(t_1))}\leq \delta~.$$
\end{lemma}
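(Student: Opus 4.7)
The plan is to realize the idealized picture of Figure~\ref{fig-spectrum} using a dumbbell-shaped family of domains, reducing the lemma to the careful control of a single avoided crossing by tuning the deformation speed. I begin by attaching to $\Omega_0$ (which plays the role of $\Omega^R$) a ball $\Omega^L(s)$ of radius $s$, connected to $\Omega^R$ by a thin channel $\omega^\eta$ of width $\eta>0$, and consider a family $h$ that interpolates between two dumbbell configurations by only varying $s$ on an interval $[s_0,s_1]$. By Theorem~\ref{th_cv_spectrum}, for $\eta$ small the spectrum of the Dirichlet Laplacian on $\Omega(t)$ is close to $(\lambda^R_j)_j\cup(\lambda^L_j(s(t)))_j$. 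Choosing $\Omega^L(s_0)$ sufficiently large and letting $s$ decrease, $\lambda^L_1(s)$ increases monotonically, and I arrange parameters so that $\lambda^L_1$ sweeps through exactly one right eigenvalue $\lambda^R_K$ on $I$ and stays uniformly separated from all other $\lambda^R_j$. At $t_0$, the first $K$ eigenfunctions of $\Omega(t_0)$ are then the first $K-1$ modes of $\Omega^R$ together with the left ground state as the $K$-th mode ($\varphi_K$); at $t_1$, after the sweep, the left ground state sits in position $K+1$, so that $\psi_K$ is essentially the $K$-th mode of $\Omega^R$ and $\psi_{K+1}$ is essentially the left ground state. A small perturbation provided by Theorem~\ref{th_Teytel_bis} turns the exact crossing of $\lambda^L_1$ and $\lambda^R_K$ into an avoided crossing at a single interior time $t^*$, while keeping the spectrum simple at $t_0$, $t_1$ and on $I\setminus\{t^*\}$.

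I split $I$ as $[t_0,t^*-\tau]\cup[t^*-\tau,t^*+\tau]\cup[t^*+\tau,t_1]$ for a small $\tau>0$. On the two outer pieces the gaps between the first $K+1$ eigenvalues are uniformly bounded below, so after a time reparametrization by a slowness parameter $\epsilon$ Proposition~\ref{prop_adiabatic} ensures that the coefficients on each instantaneous eigenmode are preserved up to phases, with an $L^2$ error that vanishes as $\epsilon\to0$. In particular the coefficients $c_1,\ldots,c_{K-1}$ propagate unchanged onto $\psi_1,\ldots,\psi_{K-1}$ (up to phases $e^{i\theta_j}$), and $c_K$ stays on the branch continuously labeled as the left ground state throughout the outer intervals.

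The crucial step is the passage through the avoided crossing. Here I parametrize the local deformation on $[t^*-\tau,t^*+\tau]$ by a speed $\sigma>0$: for $\sigma\to 0$ the passage is adiabatic, the lower-eigenvalue branch—which connects $\varphi_K$ (left ground state before) to $\psi_K$ (right $K$-th mode after)—is followed, all of $c_K$ is transferred to $\psi_K$, and $\alpha=1$; for $\sigma\to\infty$ the passage is diabatic, and Proposition~\ref{prop_continuity} applied on the short interval of length $O(\tau/\sigma)$ shows that the state barely moves in $L^2$, so $c_K$ stays on the same physical (left-ground-localized) mode, which at $t_1$ is $\psi_{K+1}$, and $\alpha=0$. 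Propositions~\ref{prop_H1bound} and~\ref{prop_difference} provide a continuous dependence of $u(t_1)$ on $\sigma$; therefore the amplitude $|\langle u(t_1)\mid\psi_K\rangle|/|c_K|$ is a continuous function of $\sigma$ taking the values $1$ at $\sigma=0^+$ and $0$ at $\sigma=\infty$, and an intermediate value argument yields a speed $\sigma^*$ for which it equals the target $\alpha$. Accumulating the errors from the three phases and choosing $\epsilon$ and $\eta$ small enough brings the total $L^2$ error below $\delta$. The main obstacle is the rigorous justification of this Landau--Zener type argument: one must show that the fast passage couples $c_K$ only to $\psi_{K+1}$ and not to the other $\psi_j$, which is controlled by the isolation of $\lambda^R_K$ in the right spectrum and the smallness of $\eta$, and one must establish the continuous dependence of the transferred amplitude on $\sigma$ uniformly in these small parameters. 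This is the PDE counterpart of the one-dimensional technique from \cite{one-D}, and recasting it in the present geometric framework is the substance of the proof.
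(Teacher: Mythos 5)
Your construction (dumbbell with a ball attached by a thin channel, sweeping the left ground state through $\lambda^R_K$, Teytel to keep the spectrum simple) is the same geometric setting as the paper's, but your key mechanism is genuinely different from the one the paper uses, and as written it has a gap. You tune the \emph{speed} $\sigma$ of passage across the avoided crossing and apply an intermediate-value argument in $\sigma$ between the adiabatic ($\sigma\to0$, $\alpha=1$) and diabatic ($\sigma\to\infty$, $\alpha=0$) limits — the classical Landau--Zener picture. The paper instead fixes the speed at order~$1$, makes the window $[-\tau,\tau]$ so short that, by Proposition~\ref{prop_continuity} / Proposition~\ref{prop_difference} (whose constants are uniform in the channel width $\eta$), the \emph{state is frozen up to $\delta'$ on the entire window}, and then applies the intermediate value theorem in the \emph{stopping time} $t_1\in[-\tau,\tau]$: the frozen state has overlap $|\langle\varphi_K(t)\,|\,\varphi^L(-\tau)\rangle|$ with the instantaneous $K$-th mode, and this overlap rotates continuously from $\approx1$ to $\approx0$ as $t$ crosses the narrow avoided crossing, so some interior $t_1$ realizes the value $\alpha$. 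No adiabatic theorem, no Landau--Zener asymptotics, no speed tuning is invoked inside this lemma.

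The concrete flaw in your diabatic limit: you claim Proposition~\ref{prop_continuity} on a time interval of length $O(\tau/\sigma)$ shows the state barely moves. But the constant $C$ there depends on $R=\|h\|_{\Cc^3}+\|h^{-1}\|_{\Cc^3}$, and the time-derivatives of a path traversed at speed $\sigma$ scale like $\sigma^k$ in $\Cc^k$, so $R\sim\sigma^3$. Tracing the proof of Proposition~\ref{prop_continuity}, the term controlling $\partial_t\|v(t)-v_0\|^2$ involves $|A_h|\sim\sigma$, so $C(R)$ grows at least linearly in $\sigma$ and $C(R)\sqrt{\tau/\sigma}$ does \emph{not} vanish as $\sigma\to\infty$. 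Thus the diabatic endpoint $\alpha=0$ is not delivered by the paper's continuity estimates as you use them, and the same objection applies to the continuity of the transfer amplitude in $\sigma$ via Proposition~\ref{prop_difference}. Your approach could probably be repaired with a rescaled or speed-uniform estimate, but as stated it relies on estimates whose constants are not uniform in the speed. The paper's stopping-time IVT is designed precisely to sidestep this: $\tau$ is fixed once the $\Cc^3$ bound $R$ is fixed, $\eta$ is then sent to zero, and all the relevant estimates are uniform in $\eta$.
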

\begin{proof}
We set $\delta'=\delta/ \big(4(K+1)(\max |c_j|)\big)$. Consider first a domain $\Omega_0^0=\Omega^R\cup\Omega^L$ split into two parts. The right side is a smooth domain $\Omega^R$ diffeomorphic to the starting domain of Theorem \ref{th_main} (possibly with holes and corners). Up to use Theorem \ref{th_simplicity}, we can assume that the spectrum of the Dirichlet Laplacian operator on $\Omega_R$ is simple and we denote by $(\lambda^R_j)_{j\in\NN^*}$ its ordered eigenvalues and by $(\varphi^R_j)_{j\in\NN^*}$ a corresponding Hilbert basis of eigenfunctions. The left side $\Omega^L$ is a simple domain, typically a ball. Its size is chosen such that the first eigenvalue $\lambda_1^L$ of the Dirichlet Laplacian operator is precisely equal to $\lambda^R_K$, with a corresponding eigenfunction $\varphi_1^L$. Choose a large ball $\Bc$ containing both parts of the domain. We consider a family of diffeomorphisms $h(t)\in\Path^3([-\tau,\tau],\Bc)$ for some small $\tau>0$ such that: $h(t=0)=\id$, the right part is never modified, {\it i.e.} $h(t)_{|\Omega^R}=\id$ for all $t\in [-\tau,\tau]$, and the left part is simply homothetically transformed by $h(t)_{|\Omega^L}=(1-t)\id$ for all $t\in [-\tau,\tau]$. We set $\Omega^0(t)=h(t,\Omega^0_0)$. For small $\tau>0$, this construction yields the following properties for the spectrum of the Dirichlet Laplacian operator in $\Omega^0(t)$: 
\begin{enum2}
\item For all $t$, its first $K-1$ eigenvalues are $\lambda^R_j$ with $j=1,\ldots,K-1$, with eigenfunctions $\varphi^R_j$. 
\item For $t<0$, the $K-$th eigenvalue of $\Omega^0(t)$ is $(1-t)^{-2}\lambda_1^L$ with eigenfunction $h_\sharp(t)\varphi^L_1$ and the $(K+1)-$th eigenvalue is $\lambda^R_K$ with eigenfunction $\varphi^R_K$. For $t>0$, the situation is symmetric with $\lambda^R_K<(1-t)^{-2}\lambda_1^L$.
\item At $t=0$, the $K-$th eigenvalue is the double one created by the crossing of the spectral curves above. 
\end{enum2}

\vspace{3mm}

To complete this non-connected domain $\Omega^0_0$, we add a small channel $\omega^\eta$ connecting smoothly its left and right parts. 
The parameter $\eta$ belongs to $(0,1]$ and describes the thickness of the channel. We set $\Omega^\eta_0=\Omega^0_0\,\cup\,\omega^\eta$ and we assume that it is diffeomorphic to the reference domain of Theorem \ref{th_main} (the connection with $\omega^\eta$ is smooth, but it does not remove the possible corners and holes of the starting domain). When $\eta$ goes to zero, the domain $\Omega^\eta_0$ converges to $\Omega^0_0$ in a singular way, as it is classical for the dumbbell shaped domains. More precisely, we need that the spectrum of the Dirichlet Laplacian operator in $\Omega^\eta_0$ converges to the corresponding one in $\Omega^0_0$ in the sense of \cite{Arendt-Daners,Daners}, see also references therein. It is sufficient to satisfy the hypotheses of Theorem \ref{th_cv_spectrum} and it is the case for any natural choice of shape for the thin channels $\omega^\eta$.We refer to Figure \ref{fig-domain} to convince the reader that all the required properties can be satisfied by $\Omega^\eta_0$.    

\vspace{3mm}

Now, we consider the evolution of a solution of the Schr\"odinger equation when we move the domain from $-\tau$ to $\tau$. We denote by $\Omega^\eta(t)$ the domains $h(t,\Omega^\eta_0)$ for $t\in (-\tau,\tau)$ and $\eta\in[0,1]$. For $\eta>0$, these are dumbbell shaped domains. The variation of $t$ changes the size of the left part and also slightly deforms the connecting channel. When $\eta$ goes to $0$, the connecting channel disappears. By abuse of notations, we still denote by $\varphi^R_j$ the extensions by zero of the eigenfunctions of the right part to the whole domain $\Omega^\eta(t)$. We set $\varphi^L(t)=h_\sharp(t)\varphi_1^L$ where we use the same notation again, for the eigenfunction of the left part and its extension. Notice that these extensions by zero still belong to $H^1_0(\Omega^\eta(t))$. Moreover, since we consider only a finite number of energy levels, their $H^1_0-$norms, related to the corresponding eigenvalues, are bounded by a constant $M$, independent of $t\in(-\tau,\tau)$ or $\eta\in(0,1]$. We apply Proposition \ref{prop_difference} with $R=2(\|h\|_{\Cc^3}+\|h^{-1}\|_{\Cc^3})$ and $T=\tau$, and we fix $\tau>0$ small enough such that $2C\sqrt{\tau}M\leq \delta'$. Since the estimation of Proposition \ref{prop_difference} is independent of the domain and thus of $\eta$, it ensures the following property.
\begin{enum2}\setcounter{compte}{3}
\item For all $\eta\in(0,1]$, if $u(t)$ is the solution of the Schr\"odinger equation in the moving domain $\Omega^\eta(t)$ with initial data $u(-\tau)=\varphi^R_j$ with $j\leq K$ or $u(-\tau)=\varphi^L(-\tau)$, then, for all $t\in[-\tau,\tau]$, we have
$$\|u(t)-u(-\tau)\|_{L^2}\leq\delta '.$$ In addition, this property is also true when the motion $h(t)$ is slightly modified or when $\tau$ is smaller.
\end{enum2}
Since $\varphi^L(t)$ is a homothetic transformation of $\varphi_1^L$, up to choose $\tau$ even smaller, we can also assume that $$\|\varphi^L(t)-\varphi^L(-\tau)\|_{L^2}\leq\delta',\ \ \ \ \ \forall t\in[-\tau,\tau].$$ 

\vspace{3mm}

The properties (i)-(iii) hold for the split domain $\Omega^0(t)$. We now choose $\eta>0$ small enough to approximate these properties by the corresponding ones for the domain $\Omega^\eta(t)$. More precisely, for all $j<K$, the $j-$th eigenvalue of the Dirichlet Laplacian operator on $\Omega^0(0)$ is simple, see property (i).
By the convergence of the spectrum recalled in Theorem \ref{th_cv_spectrum}, we can choose $\eta_0>0$ and $\tau>0$ small enough such that 
for all $t\in(-\tau,\tau)$ and $\eta\in[0,\eta_0]$, the $j-$th eigenvalue of  the Dirichlet Laplacian operator on $\Omega^\eta(t)$ is also simple. In addition, we can choose a smooth curve $\varphi_j(t)$ of corresponding eigenfunctions such that
\begin{enum2}\setcounter{compte}{4}
\item For all $j\leq (K-1)$ and $t\in[-\tau,\tau]$, the eigenfunction $\varphi_j(t)$ satisfies the following identity $$\|\varphi_j(t)-\varphi_j^R\|_{L^2}\leq \delta'.$$ 
\end{enum2}
We now fix $\tau>0$ small enough such that (iv) and (v) hold, and we restrict the deformations of the domains $h(t)$ to this time interval. Due to property (ii) above, by choosing $\eta$ smaller if necessary, we can also assume that the $K$ and $K+1$ eigenvalues of the Dirichlet Laplacian operator on $\Omega^\eta(t)$ are also simple at $t=\pm \tau$ and that the corresponding eigenfunctions are close to $\varphi^R_K$ and $\varphi^L(\pm \tau)$. Of course, due to the crossing stated in (iii), we cannot hope to have the convergence for all $t$ between $-\tau$ and $\tau$. However, we can also assume that the two-dimensional spectral projector corresponding to the $K-$th and $(K+1)-$th eigenvalues together are close up to an error $\delta'$. To simplify the notation, as a final adjustment, we allow a small perturbation of $h(t)$ given by Theorem \ref{th_Teytel} such that the $K-$th and $(K+1)-$th eigenvalues of the Laplacian operator on $\Omega^\eta(t)$ are simple for all $t\in(-\tau,\tau)$. We let the reader check that we were careful to make all the arguments above uniform in a small neighborhood of $h$. Due to this simplicity, we can choose smooth curves of eigenfunctions $\varphi_K(t)$ and $\varphi_{K+1}(t)$ corresponding to the   $K-$th and $(K+1)-$th eigenvalues of the Laplacian operator on $\Omega^\eta(t)$ such that the following relations are verified.
\begin{enum2}\setcounter{compte}{5}
\item At $t=-\tau$, we have  $\|\varphi_K(-\tau)-\varphi^L(-\tau)\|_{L^2}\leq \delta'$ and $\|\varphi_{K+1}(-\tau)-\varphi^R_K\|_{L^2}\leq \delta'$.
\item At $t=\tau$, we have $\|\varphi_K(\tau)-\varphi^R_K\|_{L^2}\leq \delta'$ and $\|\varphi_{K+1}(\tau)-\varphi^L(\tau)\|_{L^2}\leq \delta'$.
\end{enum2}
Notice that $\varphi^L(-\tau)$ belongs to the limit eigenspace span$(\varphi^L(t),\varphi^R_K)$ for all 
$t\in[-\tau,\tau]$, up to a small error $\delta'$, see the remark below (iv). Applying the convergence of the 
two-dimensional spectral projector corresponding to the $K-$th and $(K+1)-$th eigenvalues (see Theorem 
\ref{th_cv_spectrum}), we can also ensure the following property up to take a thinner channel $\omega_\eta$.
\begin{enum2}\setcounter{compte}{7}
\item For all $t\in[-\tau,\tau]$, it is satisfied $|\la \varphi^L(-\tau)|\varphi_K(t)\ra|^2 +  |\la \varphi^L(-\tau)|\varphi_{K+1}(t)\ra|^2 =  1 \pm 2\delta'$ where we use the notation $\pm\alpha$ to denote an error term of size at most $\alpha$.   
\end{enum2}

\vspace{3mm}

Now, the global setting is finally defined. It remains to check that it fulfills Lemma \ref{lemma_step}. To recover the notations of its statement, we set $t_0:=-\tau$, $\Omega(t_0)=\Omega^\eta(t_0)$, with $\eta$ as small as required above, and $\varphi_j:=\varphi_j(-\tau)$ for $j\leq K+1$. Let $u_j(t)$ be the solution of the Schr\"odinger equation with the chosen moving domains and with the initial data $u_j(t_0)=\varphi_j$. By linearity, we have $u(t)=\sum c_ju_j(t)$. For all $j<K$, by (iv) and (v), we get
$$\forall t\in [-\tau,\tau]~,~~\|u_j(t)-\varphi_j(t)\|_{L^2}\leq \|u_j(t)-\varphi_j(-\tau)\|_{L^2} +  \|\varphi_j(-\tau)-\varphi_j^R\|_{L^2} + \|\varphi_j^R-\varphi_j(t)\|_{L^2} \leq 3\delta '~.$$
Since, by (vi) and (vii), $\la \varphi_{K}(-\tau)|\varphi^L(-\tau)\ra=1\pm \delta'$
and $\la \varphi_{K}(\tau)|\varphi^L(-\tau)\ra=0\pm \delta'$, there is an intermediate time $t\in[-\tau,\tau]$ such that $|\la \varphi_{K}(t)|\varphi^L(-\tau)\ra|=\alpha\pm \delta'$. Due to (viii), we also have $|\la \varphi_{K+1}(t)|\varphi^L(-\tau)\ra|=\sqrt{1-\alpha^2} \pm 3\delta'$. Using (iv), we obtain
$$ |\la \varphi_{K}(t)|u_K(t)\ra|= |\la \varphi_{K}(t)|\varphi^L(-\tau)\ra|\pm\delta'=\alpha\pm 2\delta'$$
and in the same way 
$$ |\la \varphi_{K+1}(t)|u_K(t)\ra|= \sqrt{1-\alpha^2} \pm 4\delta'~.$$
Thus, at this precise time $t$, we can choose $\theta_K$ and $\theta_{K+1}$ in $\RR$ such that
$$\Big\|u(t)-\big(\alpha e^{i\theta_K}c_K\varphi_K(t)+\sqrt{1-\alpha^2}e^{i\theta_{K+1}}c_K\varphi_{K+1}(t) + \sum_{j=1}^{K-1}c_j\varphi_j(t)\big)\Big\|_{L^2}\leq 4 (K+1)(\max |c_j|)\delta'=\delta.$$
Set $t_1:=t$ and $\Omega(t_1):=\Omega^\eta(t)$. It simply remains to notice that, due to the simplicity of the spectrum, the eigenfunctions $\psi_j$ correspond to $\varphi_j(t)$ up to a phase shift $\theta_j$.
\end{proof}

\vspace{3mm}

\noindent{\bf Remark:} The strategy behind Lemma \ref{lemma_step} is robust and can be performed in different ways. For example, we may consider other type of domains than the dumbbell shaped one. We can also allow crossings of $\lambda^\eta_K(t)$ and $\lambda^\eta_{K+1}(t)$ since the arguments should still hold once we are able to define continuous branches of eigenfunctions $\varphi_K(t)$ and $\varphi_{K+1}(t)$ satisfying the exchange stated in (vi) and (vii). As a different strategy, we can replace, in some situations, the brief deformation in the small interval $(-\tau,\tau)$ by a slow and long adiabatic deformation or use conical intersections as in \cite{ugo1}. We refer to Section \ref{section_examples} for further discussions.

\subsection{Adjusting the phases}\label{rotations}

The arguments above allow us to distribute the energy between the different eigenmodes. However, to drive the solution close to a given state, we also need to produce the correct phases. This is a classical issue with a classical simple solution. 
\begin{lemma}\label{lemma_phases}
Let $\Omega_0\subset\Bc$ be a domain in which the Dirichlet Laplacian operator has a simple spectrum $(\lambda_j)_{j\in\NN^*}$ with a corresponding Hilbert basis of eigenfunctions $(\varphi_j)_{j\in\NN^*}$. Let $N\geq 1$ and let $u_0=\sum_{j=1}^N c_j \varphi_j$ with $c_j\in\CC$. For any real phases $(\theta_j)_{j=1\ldots N}$ and $\delta>0$, there exists a time $T$ and a motion of domains $h\in\Path^3([0,T],\Bc)$ such that the following holds. We have $h(0)=h(T)=\id$ and if $u(t)$ is the solution of the Schr\"odinger equation in the moving domains $\Omega(t)=h(t,\Omega_0)$, then 
$$\Big\|u(T)-\sum_{j=1}^N c_je^{i\theta_j}\varphi_j\Big\|_{L^2}\leq \delta~.$$
\end{lemma}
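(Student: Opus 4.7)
The key observation is that if the spectrum of $-\Delta$ on $\Omega_0$ happened to be rationally independent together with $2\pi$, the result would be immediate with $h\equiv\id$: the free evolution gives $u(T)=\sum_{j=1}^N c_j e^{-i\lambda_j T}\varphi_j$, and by Kronecker-Weyl simultaneous equidistribution, for any target $(\theta_1,\ldots,\theta_N)\in\TT^N$ and any $\eta>0$ there is $T>0$ with $|e^{-i\lambda_j T}-e^{i\theta_j}|<\eta$ for every $j\leq N$; choosing $\eta$ small enough in terms of $\delta$ and $\max_j|c_j|$ concludes. The whole point of building a nontrivial loop $h$ is to reduce the general case to this one.

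Since the spectrum of $\Omega_0$ need not have the Diophantine property, I would apply Theorem \ref{th_irrational} to pick $g\in\Diff^3(\Bc)$, as close to $\id$ as wished, such that the Dirichlet Laplacian on $\tilde\Omega=g(\Omega_0)$ has simple eigenvalues $(\tilde\lambda_j)_{j\geq 1}$ with $\tilde\lambda_1,\ldots,\tilde\lambda_N,2\pi$ linearly independent over $\QQ$. The extra independence from $2\pi$ is itself generic, as the countable intersection defining the generic set can be intersected with the complements of the countably many affine hyperplanes $\sum q_j\tilde\lambda_j=2\pi p$, $q_j,p\in\QQ$. Then I would join $\id$ to $g$ by a smooth path $g(\cdot)\in\Cc^\infty([0,1],\Diff^3(\Bc))$ whose derivatives vanish at the endpoints (so that gluing stays in $\Path^3$), and use Theorem \ref{th_Teytel_bis} to perturb this path so that the first $N$ eigenvalues of $-\Delta$ stay simple along it.

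For a small $\epsilon>0$ and a large $T_2>0$ to be chosen, I set $T=2/\epsilon+T_2$ and
\[
h(t)=\begin{cases} g(\epsilon t), & t\in[0,1/\epsilon],\\ g, & t\in[1/\epsilon,1/\epsilon+T_2],\\ g(\epsilon(T-t)), & t\in[1/\epsilon+T_2,T].\end{cases}
\]
This is a $\Cc^3$-loop with $h(0)=h(T)=\id$. Applying Proposition \ref{prop_adiabatic} to the slow forward leg, for $\epsilon$ small enough I obtain $u(1/\epsilon)=\sum_{j=1}^N c_j e^{i\alpha_j}\tilde\varphi_j+R_1$ with $\|R_1\|_{L^2}\leq \delta/3$, where $(\tilde\varphi_j)$ is a Hilbert basis of eigenfunctions on $\tilde\Omega$ and the adiabatic phases $\alpha_j\in\RR$ depend on $g$ and $\epsilon$ but \emph{not} on $T_2$. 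On the middle interval the domain is frozen, so the $j$-th coefficient is multiplied by $e^{-i\tilde\lambda_j T_2}$; a second use of Proposition \ref{prop_adiabatic} on the reversed slow leg then gives
\[
u(T)=\sum_{j=1}^N c_j\,e^{i\beta_j-i\tilde\lambda_j T_2}\,\varphi_j+R,\qquad \|R\|_{L^2}\leq 2\delta/3,
\]
for fixed phases $\beta_j\in\RR$ independent of $T_2$.

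It then suffices to choose $T_2$ such that $\tilde\lambda_j T_2\equiv \beta_j-\theta_j\pmod{2\pi}$ for $j=1,\ldots,N$, with enough accuracy to keep the $L^2$ discrepancy below $\delta/3$; this is possible by Kronecker-Weyl thanks to the rational independence established in the first step. The main obstacle is bookkeeping: one must freeze the forward and reverse legs at one fixed speed $\epsilon$ so that the $\beta_j$ are well-defined numbers (not drifting parameters), verify that independence modulo $2\pi$ is truly a generic condition, and check the smoothness of $h$ at the two gluing times — all of which are mild but require care.
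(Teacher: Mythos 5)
Your proof is correct and follows essentially the same route as the paper: use Theorem~\ref{th_irrational} to pass to a nearby domain whose spectrum is rationally independent, transit there and back (anticipating the phases picked up on the round trip, controlled either adiabatically or by continuity), and dwell in the intermediate domain until the free evolution lands on the desired point of $\TT^N$ via Kronecker--Weyl. The only superfluous care is imposing $\QQ$-independence of $\tilde\lambda_1,\dots,\tilde\lambda_N$ jointly with $2\pi$: since the dwell time $T_2$ ranges over $\RR$ rather than $\ZZ$, density of $t\mapsto(\tilde\lambda_1 t,\dots,\tilde\lambda_N t)$ in the torus already follows from $\QQ$-linear independence of the $\tilde\lambda_j$ themselves.
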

\begin{proof}
Due to the generic rational independence stated in Theorem \ref{th_irrational}, we can find a domain $\Omega_1$ diffeomorphic to $\Omega_0$ with a corresponding Laplacian operator having rationally independent eigenvalues. Denote by $(\mu_j)_{j\in\NN^*}$ this spectrum and by $(\psi_j)_{j\in\NN^*}$ the corresponding eigenfunctions. 
We consider a deformation of the domain going from $\Omega_0$ to $\Omega_1$, staying equal to $\Omega_1$ for a short time and then going back to $\Omega_0$. We would like that the corresponding solution $u(t)$ has the same distribution of energy on the eigenmodes at the beginning and at the end up to a small error. This is possible, either by choosing $\Omega_1$ very close to $\Omega_0$ and using the continuity stated in Proposition \ref{prop_difference}, or by traveling from both domains very slowly and using the adiabatic result stated in Lemma \ref{lemma_adiab} below. Once we know how this back-and-forth motion modifies the phases, we can stop the dynamics at $\Omega_1$ for longer time. Here, the solution evolves as $\sum_{j=1}^N c_j e^{i(\mu_j t+\alpha_j)}\psi_j$. Due to the rational independence of the $\mu_j$'s, the trajectory $t\mapsto  (\mu_j t+\alpha_j)_{j=1\ldots N}\in\TT^N$ is dense in the torus and we can find a time such that $u(t)$ has the suitable phases, up to a small error. Going back to $\Omega_0$ changes these phases but in a way that has been anticipated. 
\end{proof}

\subsection{Proof of Claim \ref{claim_main}}

In this Section, we complete the proof of Theorem \ref{th_main} by combining the arguments above in order to prove Claim \ref{claim_main}. It could be useful to keep in mind the insight provided by Section \ref{section_strategy}, Figure \ref{fig-spectrum}, and Figure \ref{fig-domain}.

\vspace{3mm}

Fix an error $\delta>0$ equal to $\varepsilon/(2N)$. Using Lemma \ref{lemma_phases}, we know that the problem of the phases can be repaired at the end, up to an error $\delta$. To simplify the notations, from now on, a state will be represented by its distribution of energy when it will be defined on a domain $\Omega$ where the spectrum of the associated Laplacian operator is simple. In other words, $(a_1,a_2,\ldots,a_N)$ stands for a state $\sum_{j=1}^N e^{i\theta_j} a_j\varphi_j$ where $\varphi_j$ is a Hilbert basis of eigenfunctions of $L^2(\Omega)$ and $\theta_j\in\RR$. Following this convention, we start with the state $(A,0,0,\ldots)$ in the domain $\Omega'_0\subset\Bc\subset \RR^d$. We denote by $a_j=|b_j|$ the coefficients of the aimed distribution of energy when the phases are neglected.

\vspace{3mm}

By Lemma \ref{lemma_step}, there exists a deformation between two domains $\Omega_1$ and $\Omega_2$ that drives the state $(A,0,\ldots)$ to $(a_1,\sqrt{A^2-a_1^2},0,\ldots)$ up to an error $\delta>0$. To go from our first domain $\Omega'_0$ to $\Omega_1$ without changing the distribution of energy, we use an adiabatic motion as given by the following result.
\begin{lemma}\label{lemma_adiab}
Let $\Omega_0$ and $\Omega_1\subset\Bc$ be two homotopic domains in which the spectrum of the Dirichlet Laplacian operator is simple. Denote by $(\varphi_j)_{j\in\NN^*}$ and $(\psi_j)_{j\in\NN^*}$ two respective Hilbert basis of eigenfunctions in these domains. Let $u_0=\sum_{j=1}^N c_j\varphi_j$ be given. For all $\delta>0$, there exist $T>0$ and a deformation of domains $h\in\Path^3([0,T],\Bc)$ such that $h(0,\Omega_0)=\Omega_0$ and $h(T,\Omega_0)=\Omega_1$. Moreover, if $u(t)$ is the solution of the Schr\"odinger equation in $\Omega(t)=h(t,\Omega_0)$ with initial data $u(0)=u_0\in L^2(\Omega_0)$, then there exist $\theta_j\in\RR$ such that  
$$\Big\|u(T)-\sum_{j=1}^N c_je^{i\theta_j} \psi_j\Big\|_{L^2(\Omega_1)}\leq\delta~.$$
\end{lemma}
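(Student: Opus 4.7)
The plan is to combine two tools already developed in the paper: the generic simplicity result of Teytel (Theorem \ref{th_Teytel}) to produce a good path of domains, and the adiabatic theorem on moving domains (Proposition \ref{prop_adiabatic}) to transport the coefficients along this path.

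First, I would pick any path of diffeomorphisms $\tilde h \in \Path^3([0,1],\Bc)$ with $\tilde h(0)=\id$ and $\tilde h(1,\Omega_0)=\Omega_1$; existence follows from the homotopy assumption, possibly combined with the Whitney extension/globalization arguments recalled at the end of Section \ref{section_moving}. Since the spectra of the Dirichlet Laplacian at both endpoints are simple by hypothesis, Theorem \ref{th_Teytel} applies: for any $\mu>0$ we can find $\bar h\in \Path^3([0,1],\Bc)$ with $\bar h(0)=\tilde h(0)$, $\bar h(1)=\tilde h(1)$, $\|\bar h - \tilde h\|_{\Cc^3}<\mu$, and such that the spectrum of the Dirichlet Laplacian on $\bar\Omega(\tau):=\bar h(\tau,\Omega_0)$ is simple for every $\tau\in(0,1)$. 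In particular the first $N$ eigenvalues are simple along the whole (closed) interval $[0,1]$.

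Next, I would rescale time: for a small parameter $\epsilon>0$ to be chosen, set $T=1/\epsilon$ and $h(t):=\bar h(\epsilon t)$ on $[0,T]$. Then $\Omega(t)=h(t,\Omega_0)=\bar\Omega(\epsilon t)$ is precisely the setting of the adiabatic Proposition \ref{prop_adiabatic}. Applied with initial datum $u_0=\sum_{j=1}^N c_j\varphi_j$, it produces
\[
u(T)=u_\epsilon(1/\epsilon)=\sum_{j=1}^N \tilde c_j(\epsilon)\,\psi_j + R_\epsilon,
\]
with $\|R_\epsilon\|_{L^2}\to 0$ and $|\tilde c_j(\epsilon)|\to|c_j|$ as $\epsilon\to 0$.

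Finally, I would absorb the phase ambiguity: writing $\tilde c_j(\epsilon)=|\tilde c_j(\epsilon)|e^{i\phi_j(\epsilon)}$ and $c_j=|c_j|e^{i\alpha_j}$, set $\theta_j:=\phi_j(\epsilon)-\alpha_j$, so that $c_je^{i\theta_j}=|c_j|e^{i\phi_j(\epsilon)}$ and hence $|\tilde c_j(\epsilon)-c_je^{i\theta_j}|=\bigl||\tilde c_j(\epsilon)|-|c_j|\bigr|\to 0$. Combining these with Pythagoras,
\[
\Big\|u(T)-\sum_{j=1}^N c_j e^{i\theta_j}\psi_j\Big\|_{L^2}\leq \|R_\epsilon\|_{L^2}+\Big(\sum_{j=1}^N \bigl||\tilde c_j(\epsilon)|-|c_j|\bigr|^2\Big)^{1/2}\xrightarrow[\epsilon\to 0]{}0,
\]
and choosing $\epsilon$ small enough makes this quantity less than $\delta$.

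The only real obstacle is producing a path along which the spectrum stays simple, and this is exactly what Theorem \ref{th_Teytel} supplies; everything else is a direct citation of Proposition \ref{prop_adiabatic} together with a bookkeeping of phases. Note that the spectral data at the two endpoints are genuinely simple by hypothesis, so the conclusion ``simple for $\tau\in(0,1)$'' of Theorem \ref{th_Teytel} extends to the closed interval, which is what Proposition \ref{prop_adiabatic} requires.
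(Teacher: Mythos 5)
Your argument is correct and matches the paper's own proof exactly, which simply observes that the claim follows by combining the path with simple spectrum supplied by Theorem~\ref{th_Teytel} with the adiabatic transport of Proposition~\ref{prop_adiabatic}. You have merely written out the details (time rescaling, phase bookkeeping, and the remark that simplicity of the endpoint spectra extends the conclusion of Theorem~\ref{th_Teytel} to the closed interval) that the paper leaves implicit.
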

\begin{proof}
It is sufficient to combine the existence of a path avoiding multiples eigenvalues as given by Theorem \ref{th_Teytel} and an adiabatic dynamics as given by Proposition \ref{prop_adiabatic}.   
\end{proof}

First, we use Lemma \ref{lemma_adiab} to go from $(A,0,0,\ldots)$ in $\Omega'_0$ to $(A,0,0,\ldots)$ in $\Omega_1$ up to an error $\delta$. Second, we use Lemma \ref{lemma_step} to arrive in $\Omega_2$ with the state $(a_1,\sqrt{A^2-a_1^2},0,\ldots)$ up to an error $2\delta>0$.  Third, Lemma \ref{lemma_step} provides a deformation between two new domains $\Omega_3$ and $\Omega_4$ driving the state $(a_1,\sqrt{A^2-a_1^2},0,\ldots)$ to the state $(a_1,a_2,\sqrt{A^2-a_1^2-a_2^2})$ up to an error $\delta>0$. Hence, starting with our state $(a_1,\sqrt{A^2-a_1^2},0,\ldots)$ in $\Omega_2$ (up to an error $2\delta$), we use Lemma \ref{lemma_adiab} to drive the state to $(a_1,\sqrt{A^2-a_1^2},0,\ldots)$ in $\Omega_3$ (up to an error $3\delta$). Then, we use the foreseen application of Lemma \ref{lemma_step} to obtain the state $(a_1,a_2,\sqrt{A^2-a_1^2-a_2^2})$ in $\Omega_4$ up to an error $4\delta>0$\ldots

We use this argument iteratively: Lemma \ref{lemma_step} constructs the distribution of the energy level by level and Lemma \ref{lemma_adiab} enables to travel between the different domains required by Lemma \ref{lemma_step}. After $(N-1)$ repetitions of this strategy, we obtain the distribution of energy $(a_1,\ldots,a_N)$ in a domain $\Omega_{2N-2}$ up to an error $(2N-2)\delta$. Then, we use an adiabatic motion of Lemma \ref{lemma_adiab} in order to go back to the initial domain $\Omega'_0$ with a distribution $(a_1,\ldots,a_N)$ up to an error $(2N-1)\delta$. Finally, it remains to use Lemma \ref{lemma_phases} to adjust the phases and to obtain the precise state $\sum_{j=1}^N b_j \varphi_j$ up to an error $2N\delta=\varepsilon$.


\section{Study of a particular example: rectangular and quasi-rec\-tan\-gu\-lar domains}\label{rectangle}

In this section, we explore the case of rectangles in $\RR^2$ with moving boundaries. In this particular framework, several arguments of our strategy can be made more explicit. To follow a motion where the spectrum of the Laplacian operator stay simple, as in Section \ref{section_simplicity}, we need to leave the family of rectangular domains. However, the rectangular shape makes Hypothesis (SAH2) easy to check and we can find explicitly the perturbations enabling to break the double eigenvalues.

In this section, we even design a control by a deformation different from the dumbbell shape adopted in the proof of Theorem \ref{th_main}. It shows how our general arguments are robust and may be applied for various control strategies. We consider the two-dimensional framework, but the results of this section can be easily extended to the multi-dimensional case. 

\subsection{Rectangular domains: the basic motion}
We act on the quantum state by moving the sizes of a family of rectangles 
\begin{equation}\label{mov_rect}
\Omega(t)=\big(0,f_1(t)\big)\times \big(0,f_2(t)\big).
\end{equation}
where $f_i \in\Cc^2([0,T],\RR^+)$ with $j=1,2$. If we need to transport \eqref{SE} from $\Omega(t)$ to a fixed domain, the simplest way is as follows. Consider the square $\Omega_0=(0,1)\times(0,1)\subset\RR^2$ as the reference domain. We set $h(t):~y=(y_1,y_2) \in \Omega_0~\longmapsto~\big(f_1(t)\,y_1,f_2(t)\,y_2\big)\in \Omega(t)$ which defines a family of diffeomorphisms $h(t)$ such that $\Omega(t)=h(t,\Omega_0)$. As presented in \cite[Section 5.2]{SEmoving}, instead of directly using the transformations of Section \ref{section_moving}, it is simpler to  perform a gauge transformation.  We set $\psi(t,x)=\frac{1}{4}\left(\frac{f_1'(t)}{f_1(t)}\,x_1^2+\frac{f_2'(t)}{f_2(t)}\,x_2^2\right)$. Then $u$ solves \eqref{SE} if and only if $w=h^\sharp e^{-i\psi}u$ satisfies the equation
\begin{equation}\label{SE_square1}
i\partial_t w~=~- \frac{1}{f_1(t)^2}\ddd_{y_1y_1}^2 w- \frac{1}{f_2(t)^2}\ddd_{y_2y_2}^2 w ~+~ \frac{1}{4}\left( f''_1(t) 
f_1(t) y_1^2+f''_2(t) 
f_2(t) y_2^2 \right) w.
\end{equation}

\vspace{3mm}

One of the useful features of rectangular domains is the fact that the spectrum is completely known and the eigenmodes are decoupled. More precisely, we set 
\begin{align}\label{modes}\phi_{k_1,k_2}(t) =\varphi^1_{k_1}(t,x_1)\varphi^2_{k_2}(t,x_2) \ \ \ \ \ \ \  \ \ \ \text{with} \ \ \ \ \varphi^j_k(t,\cdot)=\sqrt{\frac{2}{f_j(t)}}\sin\Big(\frac{k \pi}{f_j(t)} \cdot \Big)~.\end{align}
The eigenmode $\phi_{k_1,k_2}$ corresponds to the eigenvalue $\lambda_{k_1,k_2}=\frac{\pi^2 k_1^2}{f_1^2}+\frac{\pi^2 k_2^2}{f_2^2}$ .  
Notice that these eigenvalues may intersect, but always in a smooth way in the sense that the spectral projection on $\phi_{k_1,k_2}(t)$ depends smoothly of $t$. In this case, it is expected that adiabatic theory applies as in the case of a simple spectrum considered in Proposition \ref{prop_adiabatic}.
\begin{prop}[Adiabatic motion of rectangles]\label{prop_adiabatic_square}$~$\\
Let $\Omega(\tau)$ be a family of rectangles defined as above with $f_1,f_2\in \Cc^2([0,1],\Omega_0)$. For every family of eigenmodes $\phi_{k_1,k_2}(\tau)$, $k_j\in\NN^*$ and every $u_0\in L^2(\Omega(0))$, we have
$$\big|\la u_\epsilon(1/\epsilon)|\phi_{k_1,k_2}(1/\epsilon)\ra \big|~=~\big|\la u_0|\phi_{k_1,k_2}(0)\ra\big|+\Oc_{\epsilon\rightarrow 0}(\epsilon)\,,$$
where $u_\epsilon(t)$ is the solution of \eqref{SE_adiabatic} with initial state $u_0$. 
\end{prop}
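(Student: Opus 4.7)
The plan is to exploit the fact that, after the gauge transformation leading to \eqref{SE_square1}, the Hamiltonian separates as a sum of operators acting only in $y_1$ and only in $y_2$. Specifically, in the adiabatic time parameter, the right-hand side of \eqref{SE_square1} reads $H_\epsilon(t) = H^1_\epsilon(t)\otimes I + I\otimes H^2_\epsilon(t)$ with
$$H^j_\epsilon(t) = -\frac{1}{f_j(\epsilon t)^2}\partial_{y_j y_j}^2 + \tfrac{\epsilon^2}{4}f_j''(\epsilon t)f_j(\epsilon t)\,y_j^2$$
on $L^2(0,1)$ with Dirichlet boundary conditions. Two operators acting on different variables commute, so solutions of \eqref{SE_square1} with tensor-product initial data stay tensor product for all time, with each factor obeying an independent one-dimensional Schr\"odinger equation. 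The gauge factor $e^{i\psi}$ itself splits as $e^{i\psi_1(t,x_1)}e^{i\psi_2(t,x_2)}$, so the tensor structure in the original coordinates is preserved through the gauge transformation as well. The eigenmodes $\phi_{k_1,k_2}(t)=\varphi^1_{k_1}(t)\,\varphi^2_{k_2}(t)$ are tensor products, and the crossings of $\lambda_{k_1,k_2}$ only occur across \emph{distinct} pairs; on each one-dimensional factor the spectrum $\{(n\pi/f_j(\tau))^2\}_{n\geq 1}$ is simple with a uniform gap.

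First I would prove the proposition for $u_0=\phi_{k_1,k_2}(0)$. Separation of variables (together with the gauge factor) gives $u_\epsilon(t) = a_\epsilon(t,y_1)\,b_\epsilon(t,y_2)$ up to the pullback, with $a_\epsilon$ and $b_\epsilon$ solving the 1D equations $i\partial_t a_\epsilon = H^1_\epsilon(t)a_\epsilon$ and $i\partial_t b_\epsilon = H^2_\epsilon(t)b_\epsilon$, started at $\varphi^1_{k_1}(0)$ and $\varphi^2_{k_2}(0)$ respectively. Since the 1D spectrum is simple and $f_j\in\Cc^2$, the standard adiabatic theorem of Proposition \ref{prop_adiabatic} (in its quantitative form \`a la Nenciu) applied to the levels $n=k_j$ produces phases $\theta^j_\epsilon\in\RR$ such that $\|a_\epsilon(1/\epsilon,\cdot)-e^{i\theta^1_\epsilon}\varphi^1_{k_1}(1/\epsilon)\|_{L^2(0,1)}=\Oc(\epsilon)$ and similarly for $b_\epsilon$. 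Taking the tensor product and using that both factors are $L^2$-unit vectors,
$$\|u_\epsilon(1/\epsilon)-e^{i(\theta^1_\epsilon+\theta^2_\epsilon)}\phi_{k_1,k_2}(1/\epsilon)\|_{L^2(\Omega(1))}=\Oc(\epsilon),$$
which gives the claim for a single mode.

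Next I would pass to general $u_0\in L^2(\Omega(0))$ by linearity plus a truncation argument. Expand $u_0=\sum_{(j_1,j_2)}c_{j_1,j_2}\phi_{j_1,j_2}(0)$ in the orthonormal basis at $t=0$. For any threshold $M$, write $u_0 = u_0^{\leq M}+r_M$ with $u_0^{\leq M}$ supported on modes with $j_1,j_2\leq M$ and $\|r_M\|_{L^2}$ arbitrarily small. For $u_0^{\leq M}$ the previous step applies mode-by-mode, and the orthonormality of $\{\phi_{j_1,j_2}(1/\epsilon)\}$ at the final time yields
$$\la u^{\leq M}_\epsilon(1/\epsilon)\,|\,\phi_{k_1,k_2}(1/\epsilon)\ra = c_{k_1,k_2}\,e^{i\Theta_\epsilon}+\Oc_M(\epsilon).$$
For the remainder, unitarity of the flow gives $|\la r_M(\epsilon)\,|\,\phi_{k_1,k_2}(1/\epsilon)\ra|\leq\|r_M\|_{L^2}$, uniformly in $\epsilon$. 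Taking absolute values and choosing $M$ first, then $\epsilon$, produces the stated estimate (with the understanding that the implicit constant in $\Oc(\epsilon)$ may depend on the initial datum through its regularity, in the familiar quantitative-adiabatic fashion).

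The main obstacle is conceptual rather than computational: the eigenvalues $\lambda_{k_1,k_2}$ on $\Omega(t)$ do cross, and the spectral projector onto a degenerate eigenspace of $-\Delta$ is discontinuous in time at such crossings, so a blackbox application of an adiabatic theorem for multiple eigenvalues is not available. What rescues the statement is that, thanks to the separable structure $H=H^1\otimes I+I\otimes H^2$, the natural \emph{smooth} spectral projectors are those onto individual tensor-product modes $\phi_{k_1,k_2}(t)$, not onto the degenerate sums. Once this is recognized, only 1D adiabatic results with simple spectrum are needed, and the 2D statement follows by tensorization and density.
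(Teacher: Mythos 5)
Your plan to tensorize the Hamiltonian and reduce to one-dimensional problems is correct in principle, and it correctly identifies the same structural fact the paper exploits: after the gauge transformation and pullback, the eigenmode $\sin(k_1\pi y_1)\sin(k_2\pi y_2)$ is time-independent in $(0,1)^2$, and the pairs $(k_1,k_2)$ are preserved because the mode structure separates. However, your final step --- passing from tensor-product modes to general $u_0\in L^2(\Omega(0))$ by truncating at level $M$ and then ``choosing $M$ first, then $\epsilon$'' --- does not produce the claimed $\Oc_{\epsilon\to 0}(\epsilon)$ bound. That scheme yields $\eta + C_M\,\epsilon$, which is $o(1)$ as $\epsilon\to 0$ but not $\Oc(\epsilon)$. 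Even if you sum over all modes without truncating, being more careful about the off-diagonal overlaps only gives
$$\Big|\sum_{(j_1,j_2)\neq(k_1,k_2)} c_{j_1,j_2}\,\la U_\epsilon\phi_{j_1,j_2}(0)\,|\,\phi_{k_1,k_2}(1/\epsilon)\ra\Big| \le \|u_0\|_{L^2}\Big(\sum_{j\neq k}|\la U_\epsilon\phi_{j}|\phi_{k}\ra|^2\Big)^{1/2} = \Oc(\sqrt\epsilon),$$
since $\sum_{j\neq k}|\la U_\epsilon\phi_{j}|\phi_{k}\ra|^2 = 1 - |\la U_\epsilon\phi_{k}|\phi_{k}\ra|^2 = \Oc(\epsilon)$. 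Your parenthetical remark that the constant ``may depend on the initial datum through its regularity'' is therefore not an accurate account of what your argument gives for a generic $L^2$ datum.

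The cure is to avoid decomposing $u_0$ at all. Use the unitarity of the flow to push the evolution onto the single eigenmode: $\la u_\epsilon(1/\epsilon)\,|\,\phi_{k_1,k_2}(1/\epsilon)\ra = \la u_0\,|\,U_\epsilon(1/\epsilon,0)^{*}\,\phi_{k_1,k_2}(1/\epsilon)\ra$, and apply your (single-mode) adiabatic estimate to the backward transport of $\phi_{k_1,k_2}(1/\epsilon)$. Then one mode suffices and the error is $\Oc(\epsilon)\|u_0\|_{L^2}$ uniformly in $u_0$. This is precisely what the paper does in an equivalent, more economical form: it computes $\partial_t\la w_\epsilon(t)\,|\,\psi\ra$ directly and uses the self-adjointness of the Laplacian to move the operator onto the fixed function $\psi=\sin(k_1\pi y_1)\sin(k_2\pi y_2)$, so that the identity $\partial_t\la w_\epsilon|\psi\ra = -i\lambda_{k_1,k_2}(\epsilon t)\la w_\epsilon|\psi\ra + \Oc(\epsilon^2)$ requires nothing of $u_0$ beyond its $L^2$-norm; Gr\"onwall over $[0,1/\epsilon]$ then gives the $\Oc(\epsilon)$ rate. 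Note also that invoking Proposition \ref{prop_adiabatic} ``in its quantitative form'' is an overreach twice over: that proposition as stated is qualitative (no rate), and in the one-dimensional pulled-back problem you do not need an adiabatic theorem at all --- the eigenfunction $\sqrt2\sin(k_1\pi y_1)$ is time-independent, so $e^{-i\Lambda(t)}\sqrt2\sin(k_1\pi y_1)$ solves the unperturbed $1$D equation exactly, and the $\Oc(\epsilon)$ error comes from the Duhamel bound applied to the $\Oc(\epsilon^2)$ potential term over a time $1/\epsilon$.
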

In particular, an adiabatic deformation of the rectangle drives $\phi_{k_1,k_2}(0)$ close to the mode $\phi_{k_1,k_2}(1)$. The ordering of the sequence of eigenvalues $\lambda_{k_1,k_2}$ depends on the lengths $f_j(\tau)$, and the rank of $\phi_{k_1,k_2}(1)$ might not be the same as the one of $\phi_{k_1,k_2}(0)$. In other words, the adiabatic deformation of the rectangle allows for passing through the eigenvalue crossings that appear during the deformation of the rectangle and for performing the permutations of eigenmodes.\\ 

{ \noindent \emph{\textbf{Proof of Proposition \ref{prop_adiabatic_square}:}}} We did not find any accurate reference for a  version of the adiabatic theorem with crossing of eigenvalues, which directly applies in the general situation of a moving domain $\Omega(\tau)$ (the corresponding Hamiltonian depends on the time in a not so classical way). However, the proof in the case of rectangular shapes is not difficult (in particular thanks to the gauge transformation, which is not always possible, see \cite{SEmoving}). We provide it below for the sake of completeness.

We adapt the gauge transform above: we set $\psi_\epsilon(\tau,x)=\frac{\epsilon}{4}\left(\frac{f_1'(\tau)}{f_1(\tau)}\,x_1^2+\frac{f_2'(\tau)}{f_2(\tau)}\,x_2^2\right)$ and $w_\epsilon(t)=h^\sharp(\epsilon t) e^{-i\psi_\epsilon(\epsilon t)}u_\epsilon(t)$. Again, $u_\epsilon(t)$ is the solution of \eqref{SE_adiabatic} if and only if $w_\epsilon$ satisfies the equation
\begin{equation}\label{SE_square_adiab}
i\partial_t w_\epsilon(t)~=~- \frac{1}{f_1(\epsilon t)^2}\ddd_{y_1y_1}^2 w_\epsilon- \frac{1}{f_2(\epsilon t)^2}\ddd_{y_2y_2}^2 w_\epsilon ~+~ \frac{\epsilon^2}{4}\left( f''_1(\epsilon t) f_1(\epsilon t) y_1^2+f''_2(\epsilon t) f_2(\epsilon t) y_2^2 \right) w_\epsilon
\end{equation}
in the square $(0,1)^2$. Up to a mutiplicative constant, the eigenmode $\phi_{k_1,k_2}(x)$ in $\Omega(\tau)$ becomes $\psi(y):=\sin(k_1\pi y_1)\sin(k_2\pi y_2)$ in the square and is independent of $t$. We also notice that the phase $e^{-i\psi_\epsilon(\epsilon t)}$ behaves as $1+\Oc(\epsilon)$ for small $\epsilon>0$. Thus, proving Proposition \ref{prop_adiabatic_square} comes down to show that 
$$ \big|\la w_\epsilon(1/\epsilon)|\psi \ra \big| = \big|\la w_\epsilon(0)|\psi\ra\big| + \Oc_{\epsilon\rightarrow 0}(\epsilon)$$
for any solution of \eqref{SE_square_adiab}. First notice that, due to Hamiltonian structure of \eqref{SE_square_adiab}, the $L^2-$norm of $w_\epsilon$ is constant and the last term of \eqref{SE_square_adiab} is of order $\Oc(\epsilon^2)$. Thus, we get that
\begin{align*}
\partial_t  \la w_\epsilon(t)|\psi \ra ~&=~ i \left\la \Big(\frac{1}{f_1(\epsilon t)^2}\ddd_{y_1y_1}^2 + \frac{1}{f_2(\epsilon t)^2}\ddd_{y_2y_2}^2\Big) w_\epsilon \,\Big|\, \psi \right\ra ~+~\Oc(\epsilon^2)\\
&=~ i \left\la  w_\epsilon \,\Big|\, \Big(\frac{1}{f_1(\epsilon t)^2}\ddd_{y_1y_1}^2 + \frac{1}{f_2(\epsilon t)^2}\ddd_{y_2y_2}^2\Big) \psi \right\ra~+~\Oc(\epsilon^2)\\
&= ~ -i \Big(\frac{\pi^2 k_1^2}{f_1(\epsilon t)^2}+\frac{\pi^2 k_2^2}{f_2(\epsilon t)^2} \Big) \la w_\epsilon(t)|\psi \ra  ~+~\Oc(\epsilon^2)\\
&:=~-i\lambda_{k_1,k_2}(\epsilon t)  \la w_\epsilon(t)|\psi \ra  ~+~\Oc(\epsilon^2) ~.
\end{align*}
Thus, if we set $\Lambda(\tau)=\int_0^\tau \lambda_{k_1,k_2}(\sigma)\,{\rm d}\sigma$, then we obtain
$$\la w_\epsilon(1/\epsilon)|\psi \ra~=~ e^{-i\Lambda(1)/\epsilon} \la w_\epsilon(0)|\psi \ra~+~\Oc(\epsilon)~.$$
This concludes the proof of Proposition \ref{prop_adiabatic_square} (even providing the exact phase shift).
{\hfill$\square$\\}

\subsection{Decoupling: application of the 1D bilinear control}\label{bilinear_control}

The main feature of the family of rectangular domains is the possibility of decoupling the horizontal and vertical coordinates. 
If $u$ is decomposed as $u(t,x_1,x_2)=u_1(t,x_1)u_2(t,x_2)$ for $(x_1,x_2)\in\Omega(t)$, then $w=h^\sharp e^{-i\psi}u$ is also a product of functions 
\begin{equation}\label{separation}w(t,y_1,y_2)=w_1(t,y_1)w_2(t,y_2) \ \ \text{with}\ \ \ w_j(t,y_j)=\frac 1{\sqrt{f_j}} e^{-\frac i4 f'_j f_j y_j^2} u_j(t,f_j(t)y_j).
\end{equation}
It is straightforward to split \eqref{SE_square1} and to check that $w_1$ and $w_2$ are respectively solutions of the following equations
\begin{equation}\begin{split}\label{SE_square2}
i\partial_t w_1~=~- \frac{1}{f_1(t)^2}\ddd_{y_1y_1}^2 w_1~+~ \frac{1}{4} f''_1(t) 
f_1(t) y_1^2 w_1 \ \ \ \  \ \text{in}\ \ \ \ (0,1),\\
i\partial_t w_2~=~- \frac{1}{f_2(t)^2}\ddd_{y_2y_2}^2 w_2 ~+~ \frac{1}{4}f''_2(t) 
f_2(t) y_2^2  w_2 \ \ \ \  \ \text{in}\ \ \ \ (0,1).
\end{split}
\end{equation}
Vice-versa, if $w_1$ and $w_2$ are solutions of \eqref{SE_square2}, then $w=w_1w_2$ is solution of \eqref{SE_square1} which provides a solution $u$ of \eqref{SE}. We can exploit this decoupling as follows.
First, we simplify the expressions in \eqref{SE_square2} in order to eliminate the time-dependence of the main operator. We use the following change of variables appearing in \cite{Beauchard,Moyano} (see also \cite{BMT,Beauchard_Teismann,Rouchon}): 
\begin{equation}\label{eq_tau}
\tau_j=\int_0^t \frac{1}{f_j(s)^2}ds \ \ \ \text{and}\ \ \ U_j(\tau_j)=\frac{f_j'(t)f_j(t)}{4}~~~~j=1,2.
\end{equation}

We substitute these elements in the corresponding equation in \eqref{SE_square2}. We obtain two completely decoupled bilinear Schr\"odinger equations
\begin{equation}\label{SE_square3}
i\partial_{\tau_j} w_j~=~- \ddd_{y_jy_j}^2 w_j~+~ \Big(U_j'(\tau_j)-4U_j(\tau_j)\Big)y_j^2 w_j \ \ \ \  \ \text{in}\ \ \ \ (0,1)~,~~ j=1,2.
\end{equation}
We can now use the well-known results concerning the bilinear control of the one-dimensional Schr\"odinger equation to obtain the following control. Notice that similar approaches for the one-dimensional case are used in \cite{Beauchard,Beauchard_Teismann,Moyano,Rouchon}.

\begin{prop}[Approximate control for decoupled data]\label{control_bilinear_decoupled}$~$\\
Let $\Omega^\ii=(0,a)\times(0,b)$ with  $a,b>0$. Let $u^\ii, u^\ff\in L^2(\Omega^\ii)$ satisfying $\|u^\ii\|_{L^2}=\|u^\ff\|_{L^2}$ and admitting a decoupling 
$$u^\ii(y_1,y_2)=u_1^\ii(y_1)u_2^\ii(y_2) ~~~\text{ and }~~~ u^\ff(y_1,y_2)=u_1^\ff(y_1)u_2^\ff(y_2)$$  
For every $\varepsilon>0$, there exist $T>0$ and a family of moving rectangles $\{\Omega(t)\}_{t\in (0,T)}$ as in \eqref{mov_rect} such that $\Omega(0)=\Omega(T)=\Omega^\ii$ and such that the solution of the corresponding dynamics \eqref{SE} with initial data $u(t=0)=u^\ii$ satisfies $$\|u(t=T)-u^\ff\|_{L^2}\leq \varepsilon.$$  
 \end{prop}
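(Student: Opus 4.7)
The strategy relies entirely on the decoupling property of rectangular domains expressed in \eqref{separation}--\eqref{SE_square3}: a moving rectangle preserves the tensor product structure of solutions, which reduces the 2D control problem to two independent 1D bilinear control problems on the fixed interval $(0,1)$. I would proceed in three stages.

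First, I would reduce and normalize. Writing $u^\ii = u_1^\ii \otimes u_2^\ii$ and $u^\ff = u_1^\ff \otimes u_2^\ff$, the equality $\|u^\ii\|_{L^2} = \|u^\ff\|_{L^2}$ factors as $\|u_1^\ii\|\|u_2^\ii\| = \|u_1^\ff\|\|u_2^\ff\|$, so replacing $u_1^\ii$ by $c u_1^\ii$ and $u_2^\ii$ by $c^{-1} u_2^\ii$ for a suitable $c>0$ (which does not change the actual initial data) allows one to assume $\|u_j^\ii\|_{L^2} = \|u_j^\ff\|_{L^2}$ for each $j=1,2$. After applying the gauge and pullback of \eqref{separation}, and then the time change \eqref{eq_tau}, the two factors $w_j$ solve the independent 1D bilinear Schr\"odinger equations \eqref{SE_square3} on $(0,1)$ with scalar control $V_j(\tau_j) := U_j'(\tau_j) - 4U_j(\tau_j)$ multiplying $y_j^2$.

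Second, I would invoke the existing one-dimensional bilinear control theory (in the spirit of \cite{Beauchard,BC,Beauchard_Teismann,Rouchon}) for each 1D equation separately. Since we only need approximate control, it suffices to produce a smooth control $V_j$ on some interval $[0,T_j^*]$ steering the (gauge-transformed) $u_j^\ii$ to within $\varepsilon/3$ of the (gauge-transformed) $u_j^\ff$. I would additionally require $U_j$ to vanish in a neighborhood of the endpoints of $[0,T_j^*]$ (for smoothness of the resulting $f_j$) and $\int_0^{T_j^*} U_j\dd\sigma = 0$; both constraints can be imposed at the cost of extending the control interval. From $U_j$ one recovers $f_j$ via the linear ODE $\frac{d}{d\tau_j}(f_j^2) = 8 U_j f_j^2$, whose solution is $f_j^2(\tau_j) = f_j(0)^2 \exp\bigl(8\int_0^{\tau_j} U_j\bigr)$, so the closure condition $\int U_j = 0$ exactly returns $f_j$ to its initial value, giving $\Omega(T) = \Omega^\ii$ as required.

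Third, I would synchronize the two controls, which a priori operate on independent time variables $\tau_1,\tau_2$ but must correspond to a common physical time $t$. Padding each control with rest intervals on which $U_j \equiv 0$ (so $f_j$ is constant and the only effect on $w_j$ is a phase rotation of its spectral components) allows me to align the two processes so that the approximate targets are reached simultaneously at some common $T$. The residual phase mismatch between the factors can be absorbed by a further rest period, using rational independence of eigenvalue differences to hit the correct relative phases up to an $\varepsilon/3$ error, much as in Lemma \ref{lemma_phases}. Recomposing via the product structure and undoing the gauge and pullback transformations yields the desired control.

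The main obstacle is the second stage: invoking a 1D bilinear approximate controllability result that is flexible enough to accommodate the endpoint constraints ($V_j$ of the form $U_j'-4U_j$ with $U_j$ vanishing near the endpoints and with zero mean). This is by now well understood for the potential $y^2$ on $(0,1)$, and the flexibility of approximate (as opposed to exact) control makes the endpoint constraints easy to enforce by concatenation with passive segments, but it is the only step where external theory is genuinely needed.
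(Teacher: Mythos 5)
Your proposal follows the same architecture as the paper's proof: decouple via \eqref{separation}, reduce to two independent 1D bilinear Schr\"odinger equations \eqref{SE_square3} on $(0,1)$ with control $V_j = U_j' - 4U_j$, invoke 1D bilinear approximate controllability, reconstruct the lengths $f_j$, synchronize the two physical times, and mop up phases at the end. Your reconstruction of $f_j$ is actually cleaner than the paper's: from \eqref{eq_tau} one gets $\int_0^{\tau_j} U_j\,{\rm d}\sigma = \tfrac14\ln\bigl(f_j(\tau_j)/f_j(0)\bigr)$, so $f_j^2(\tau_j)=f_j(0)^2\exp\bigl(8\int_0^{\tau_j}U_j\bigr)>0$ automatically; the paper instead picks $U_j(0)$ large enough to force $U_j>0$ and then studies the nonlinear ODE $\tau''=-8(\tau')^2U(\tau)$ for $\tau(t)$, showing $\tau$ reaches $\tau^\ff$ in finite time and setting $f=1/\sqrt{\tau'}$. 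Your direct formula avoids that detour and the artificial positivity constraint.

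The one step that is genuinely under-justified is the claim that one can impose both $U_j$ vanishing near the endpoints and $\int_0^{T_j^*}U_j=0$ ``at the cost of extending the control interval.'' The cited 1D bilinear controllability results deliver some $V_j$, not a $V_j$ with these integral constraints on its integrating factor; and appending a passive segment $V_j\equiv 0$ forces $U_j(\tau)=U_j(T_j^*)e^{4(\tau-T_j^*)}$, which drifts monotonically and cannot be used to cancel $\int U_j$ while keeping the state fixed. Moreover, even granting $\int U_j=0$ (equivalently $f_j(T)=f_j(0)$), this alone does not guarantee that the \emph{state} returns to the desired one after the extra segment unless that segment is adiabatic — a fast back-and-forth with $\int U_j=0$ would return the rectangle but scramble $w_j$. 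The paper sidesteps this cleanly: it accepts that the control may leave you in a rectangle of the wrong dimensions, and appends an explicitly adiabatic deformation back to $\Omega^\ii$, using Proposition~\ref{prop_adiabatic_square} to conclude that this return trip only introduces phase factors, which are then compensated together with the synchronization phases via rational-independence rotations (as in Lemma~\ref{lemma_phases}). You should either make the adiabaticity of your return segment explicit (which is really the same as the paper's route) or justify from the 1D control theory that controls with the stated constraints exist; as written, the closure of the loop $\Omega(0)=\Omega(T)=\Omega^\ii$ is asserted rather than proved.
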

\begin{proof}
Up to rescaling the components, we can assume that $\|u^\ii_j\|_{L^2}=\|u^\ff_j\|_{L^2}=1$ for $j=1,2$. To prove the result, it is sufficient to provide deformations of the lengths $f_j(t)$ enabling to control the equations \eqref{SE_square3}.
Due to the decoupling, we can apply the one-dimensional results. The bilinear equation
\begin{equation}\label{eq_cbd1}
i\ddd_\tau w=-\ddd_{yy}^2 w + V(\tau) y^2 w~~~~~~~~y\in (0,1)
\end{equation}
is globally approximately controllable in $L^2(0,1)$ (and in $H^3$) thanks to  \cite[Example 2.2\ \&\ Theorem 4.4]{SE_bili_simu} (see also \cite{chambrion1,nabile,chambrion,SE_bili_stima,milo,nerse2}). Notice that the cited result is stated with $V\in L^2$, but the control can actually be in $\Cc^\infty$. The controllability ensures the existence of two times $\tau^\ff_j>0$, $j=1,2$ and two controls $V_j(\tau)\in \Cc^\infty(0,\tau^\ff_j)$ such that the dynamics of $i\ddd_\tau w_j=-\ddd_{y_jy_j}^2 w_j + V_j(\tau) y_j^2 w_j$ steers $w^\ii_j$ close to $w^\ff_j$ with respect to the $L^2-$norm in a time $\tau^\ff_j$. Moreover, the gauge transformation \eqref{separation} is independent of $f_j$ when the rectangle is not moved, thus  $w^\ii_j$ and $w^\ff_j$ are determined explicitly from $u^\ii$ and $u^\ff$.

The main problem here is to construct functions $f_j(t)$ providing the aimed controls $V_j(\tau)$. To simplify the notations, we omit the index $j$ in this part. We choose a solution $U(t)$ of 
$$ U'(\tau)= 4 U(\tau) + V(\tau) ~~~~~\text{ for } \tau\in [0,\tau^\ff]$$
with an initial data $U(0)$ sufficiently large such that $U(\tau)>0$ for $\tau\in [0,\tau^\ff]$. Since the above ODE is linear, this step is easy. Finding $f$ satisfying \eqref{eq_tau} is instead more subtle. We consider $\tau(t)$ a local solution of the nonlinear second order ODE
\begin{equation}\label{eq_tau2}
\left\{\begin{array}{ll} \tau''(t)&=-8(\tau'(t))^2 U(\tau(t)), \\ \tau(0)&=0,\\ \tau'(0)&=1/a^2.\end{array}\right.
\end{equation}
The theorem of Cauchy-Lipschitz obviously applies to \eqref{eq_tau2} but it provides only the local existence and uniqueness. The solution $\tau(t)$ of \eqref{eq_tau2} exists until $\tau(t)$ leaves $[0,\tau^\ff]$ or until $\tau'(t)$ blows up. We first notice that $\tau''(t)<0$ and thus $\tau'(t)$ is decreasing. Moreover, $(c,0)$ is a solution of \eqref{eq_tau2} for all $c\in [0,\tau^\ff]$. Since $\tau'(0)\neq 0$, $\tau'(t)$ never vanishes. Both remarks imply that $\tau'(t)$ stays in $(0,1/a^2]$ and thus it can not blow up. Since $\tau(t)$ is increasing, there are two possibilities: either the solution $\tau$ exists for all $t>0$, or $\tau(t)$ reaches $\tau^\ff$ at a time $T$. This last possibility is the one we need to control \eqref{eq_cbd1} since $\tau$ has to describe the whole interval $[0,\tau^\ff]$. But
$$\frac{\dd}{\dd t} \Big(\frac 1 {\tau'(t)}\Big)= - \frac {\tau''(t)}{(\tau'(t))^2}=8U(\tau(t))\in \big(0,8\|U\|_\infty \big]$$
Thus, $1/\tau'(t)\leq 1/a^2+8\|U\|_\infty t$ and $\tau(t)\geq \frac 1{8\|U\|_\infty} \Big(\ln\big(1/a^2+8\|U\|_\infty t\big) + 2\ln a \Big)$, which imply that $\tau$ must reach $\tau^\ff$ in a finite time $T$. We can finally set $f(t)=1/\sqrt{\tau'(t)}$ for all $t\in [0,T]$ (recall that $\tau'(t)>0$). It is then straightforward to check that, by construction, the change of variables \eqref{eq_tau} effectively transforms \eqref{SE_square3} to \eqref{eq_cbd1} with $V(\tau)$ the suitable control. 

The above arguments show that we can drive $u^\ii_1$ to $u^\ff_1$ in a time $T_1$ and $u^\ii_2$ to $u^\ff_2$ in a time $T_2$. A priori $T_1\neq T_2$ and we let one of the components evolves following the free Schr\"odinger equation during the time $|T_1-T_2|$. Moreover, it is possible that the final rectangle has not the dimensions $a\times b$. In this case, we deform adiabatically the rectangle to obtain the aimed dimensions. Both associated evolutions do not change the distribution of energy of the modes, but add phases. Thus, we obtain a state $\sum_{k} c_{k} e^{i\theta_{k}} \varphi_{k}$ instead of $u^\ff=\sum_{k} c_{k}  \varphi_{k}$ (where $(\varphi_k)_{k\in\NN}$ denotes a Hilbert basis of eigenfunctions). 
Since we are interested in approximate controllability, it is sufficient to consider a finite sum on the first eigenfunctions $\varphi_k$, $k=0,\ldots, N$. If $a$ and $b$ are rationally independent, then it is sufficient to wait and let the evolution of the free Schr\"odinger equation unfolds the considered first phases up to a sufficiently small error. When $a$ and $b$ are not rationally independent, we firstly deform adiabatically the initial rectangle $\Omega_0$ into a new one $\tilde\Omega_0$ satisfying such hypothesis. Now, we rotate the states as in the previous point and we finally come back adiabatically to $\Omega_0$. The adiabatic back-and-forth deformations of $\Omega_0$ in $\tilde\Omega_0$ also add some phases but we can program the intermediate rotations in order to also remove these new phases (see the arguments of Lemma \ref{lemma_phases}). \end{proof}

Notice that the controllability result from Proposition \ref{control_bilinear_rec} is only valid for the very specific class of states which are separable in the variables. In Section \ref{section_global_rect}, we use this specific control to obtain the global approximate controllability for general quantum states defined on a rectangle.

\subsection{Breaking symmetries: adiabatic motions without crossing of eigenvalues}\label{section_break}

In this subsection, we investigate the existence of deformations of a rectangle in another one, avoiding all the possible crossings of the first $N$ modes. We can preserve the rectangular shape of the domain as in \eqref{mov_rect} as soon as the first $N$ eigenmodes are simple. Each time we approach the shape of a rectangle 
admitting a double eigenvalue, we need to break the rectangular structure with a short deformation given by Theorem \ref{th_Teytel_bis} in order to avoid it. After, we come back to the rectangular shape and we iterate this process until we reach the final domain. The key point here is the use of Theorem \ref{th_Teytel_bis} to preserve the simplicity of the spectrum. Any generic perturbation of the shape would work. However, we show that very specific and simple perturbations are sufficient to break the symmetries. To this end, we need to show the validity of Hypotheses (SAH2) of Section \ref{section_simplicity} along a deformation of rectangular shapes defined as in \eqref{mov_rect}.  

\smallskip

Let $\Omega_0=(0,1)\times(0,1)$ and $h$ be a diffeomorphism $h:(y_1,y_2)\in \Omega_0\mapsto (a y_1,b y_2)$ with $a,b>0$. The spectrum of the Dirichlet Laplacian in $\Omega=h(\Omega_0)$ can present double eigenvalues according to the lengths $a$ and $b$. We consider the parameters $a$ and $b$ so that there is a double eigenvalue
\begin{align}\label{resonance}\lambda=\frac{\pi^2 k_1^2}{a^2}+\frac{\pi^2 k_2^2}{b^2}=\frac{\pi^2 l_1^2}{a^2}+\frac{\pi^2 l_2^2}{b^2}\end{align}
with suitable different $k_1,k_2,l_1,l_2\in\NN^*$. We denote by $\phi_{k_1,k_2}$ and $\phi_{l_1,l_2}$ two corresponding orthonormal eigenfunctions defined as in \eqref{modes}. To by-pass this double eigenvalue, we use 
the strategy of Theorem \ref{th_Teytel_bis} in the class of deformations $\Hc$ consisting of diffeomorphisms of the form $(y_1,y_2)\mapsto (f_1(y_2) y_1,f_2(y_1) y_2)$ with $f_j>0$ polynomials of degree $2$. The class $\Hc$ contains the rectangular deformations as well as tilting or bending of edges (see Table \ref{table1}). Since we argue by locally perturbing a straight path of rectangles, we only need to verify the conditions (SAH2) for the eigenvalue $\lambda$ of the rectangular shape, with perturbations in the tangent space $T_h\Hc$. We compute in Table \ref{table1} the corresponding integrals considered in the conditions (SAH2) by using the notation 
$$I_{m_1,m_2,n_1,n_2}(g)= \Big|\int_{\partial\Omega} \Dnu{\phi_{m_1,m_2}}\Dnu{\phi_{n_1,n_2}} \la h_*g|\nu\ra \dd\sigma\Big|,\ \ \ \ \ \ (m_1,m_2),(n_1,n_2)\in\big\{(k_1,k_2),\ (l_1,l_2)\big\}.$$

\begin{table}[p!]
\begin{center}
\resizebox{0.7\textwidth}{!}{\rotatebox{90}{
\renewcommand{\arraystretch}{1.5}
\begin{tabular}{|M{5cm}|M{3.2cm}|N{2.5cm}|N{2.5cm}|N{4.8cm}N{0pt}|}
\hline
{$h_*g$} 
& Figure
& $I_{k_1,k_2,k_1,k_2}$ 
& $I_{l_1,l_2,l_1,l_2}$ 
& $I_{k_1,k_2,l_1,l_2}$ &  
\rule{0pt}{14mm}\\
\hline
$h_*g_1=(x_1,0)$ 
&\vspace{2mm}\includegraphics[height=12mm]{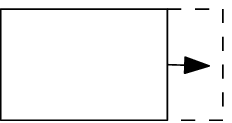} 
& $\frac{2k^2_1\pi^2}{a^3}$ 
& $\frac{2l^2_1\pi^2}{a^3}$ 
&  0 &  \rule{0pt}{14mm}\\ 
\hline
$h_*g_2=(0,x_2)$ 
&\vspace{2mm}\includegraphics[height=12mm]{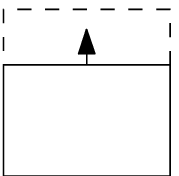}
& $\frac{2k^2_2\pi^2}{b^3}$ 
& $\frac{2l^2_2\pi^2}{b^3}$ 
& 0 & \rule{0pt}{14mm}\\ 
\hline
$h_*g_3= \Big(\big( \frac{x_2}{b}-\frac{1}{2}\big)x_1,0\Big)$
& \vspace{2mm}\includegraphics[height=14mm]{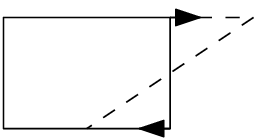} 
& 0 
& 0 
& $\left\{ \begin{array}{l}
            0 \hspace{1cm} \text{{\normalsize{}if $k_2\equiv l_2\text{ mod }2$}}\\ 
            \frac{16b \, k_1l_1 k_2 l_2}{a^3(k_2^2-l_2^2)^2}~~\text{{\normalsize{}otherwise}} 
           \end{array}\right.$ & \rule{0pt}{20mm}\\ 
\hline
$h_*g_4= \Big(  \frac{x_2}{b}\big(1-\frac{x_2}{b}\big)x_1,0\Big)$
& \vspace{2mm}\includegraphics[height=14mm]{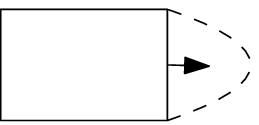} 
& $\frac{ k^2_1(k_2^2\pi^2+3)}{3a^3k_2^2}$
& $\frac{ l^2_1(l_2^2\pi^2+3)}{3a^3l_2^2}$ 
& $\left\{ \begin{array}{l} 
          \frac{16b^2\, k_1l_1 k_2 l_2}{a^3(k_2^2-l_2^2)^2}~~\text{{\normalsize{}otherwise}}\\ 
          0 \hspace{1cm} \text{{\normalsize{}if $k_2\not\equiv l_2\text{ mod }2$}}
           \end{array}\right.$ & \rule{0pt}{20mm}\\ 
\hline
$h_*g_5= \Big( \frac{x_2^2}{b^2} x_1,0\Big)$ 
& \vspace{2mm}\includegraphics[height=12mm]{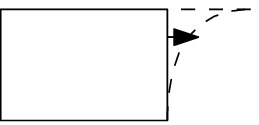} 
& $\frac{k^2_1(2k^2_2 \pi^2-3)}{3 k_2^2 a^3}$
& $\frac{ l^2_1(2l^2_2 \pi^2-3)}{3 l_2^2 a^3}$ 
& $\frac{16 k_1l_1 k_2 l_2}{a^3(k_2^2-l_2^2)^2}$& \rule{0pt}{14mm}\\ 
\hline
\end{tabular}
}}
\end{center}
\caption{\it The family of rectangular deformations is too symmetric to make possible to avoid double eigenvalues. We look for additional deformations enabling to by-pass the double eigenvalues. In this table, we consider several simple perturbations $h^*g$ of a rectangle $(0,a)\times(0,b)$ and we compute the integrals required to check Hypothesis (SAH2) of Section \ref{section_simplicity}.}\label{table1}
\end{table}

Let us consider now the expressions provided in Table \ref{table1}. Both ratio $I_{k_1,k_2,k_1,k_2}/I_{l_1,l_2,l_1,l_2}$ associated to $g_1$ and $g_2$ (the rectangular deformations) are different since  
\begin{equation}\label{rel1}
\frac{I_{k_1,k_2,k_1,k_2}}{I_{l_1,l_2,l_1,l_2}}(g_1)=
\frac{2k^2_1\pi^2}{a^3}\frac{a^3}{2l_1^2 \pi^2}=\frac{k^2_1}{l^2_1}\neq \frac{k^2_2}{l^2_2}=\frac{I_{k_1,k_2,k_1,k_2}}{I_{l_1,l_2,l_1,l_2}}(g_2).
\end{equation}
Indeed we must have $\frac{k^2_1}{l^2_1}\neq \frac{k^2_2}{l^2_2}$ otherwise \eqref{resonance} would imply
$(k_1,k_2)=(l_1,l_2)$. The inequality \eqref{rel1} yields that the functionals corresponding to 
$I_{k_1,k_2,k_1,k_2}$ and $I_{l_1,l_2,l_1,l_2}$ (the first two rows of Table \ref{table1}) are linearly independent, even with simple rectangular deformations. In order to show that also the third functional is linearly independent, we need to consider non-rectangular transformations $g_3$ and $g_4$. The first ensures the property for $g\in T_h\Hc\mapsto I_{k_1,k_2,l_1,l_2}$ when $k_2\not\equiv l_2\text{ mod }2$, while the second when $k_2\equiv l_2\text{ mod }2$. The use of the simple deformations $g_1$, $g_2$, $g_3$ and $g_4$ (Table \ref{table1}) is sufficient to ensure the Hypothesis (SAH2). However, it is also possible to combine $g_3$ and $g_4$ in the transformation $g_5$ which always works but it does not preserve any symmetry at all.

The linear independence of the three functionals yields the validity of the conditions (SAH2) and allows us to use Theorem \ref{th_Teytel_bis} in order to avoid multiple eigenvalues. 
\begin{prop}\label{prop_Teytel_rect}
Let $N\in\NN$ and $\Omega_0=(0,1)\times (0,1).$ Let $h\in\Path^k[0,1]$ be defined as in \eqref{mov_rect} and represent a family of moving rectangles $\Omega(\tau)=h(\tau,\Omega_0)$ such that 
$$\Omega(0)=(0,a)\times (0,b),\ \ \  \ \ \ \Omega(1)=(0,a')\times (0,b')~.$$ 
For all $\varepsilon>0$, there exists a path $g\in\Path^k[0,1]$ such that:
\begin{itemize2}
\item $g(0)=h(0)$, $g(1)=h(1)$ and for all $\tau\in(0,1)$,  $g(\tau)$ belongs to $\Hc$, i.e. it is a combination of the first fourth transformations $g_1,\ldots,g_4$ of Table \ref{table1},
\item $g$ is a small perturbation of the initial rectangular path that is $\|g-h\|_{\Cc^k([0,1]\Omega_0)}< \varepsilon$
\item the $N$ first eigenvalues of the Dirichlet Laplacian operator $-\Delta$ in $\tilde\Omega(\tau)=g(\tau,\Omega_0)$ are simple for all $\tau\in (0,1)$.
\end{itemize2}
\end{prop}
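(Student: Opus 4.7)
The plan is to apply Theorem \ref{th_Teytel_bis} to the submanifold $\Hc\subset\Diff^k(\Bc)$ formed by the diffeomorphisms $(y_1,y_2)\mapsto (f_1(y_2)y_1,f_2(y_1)y_2)$ with $f_1,f_2$ positive polynomials of degree at most $2$. The rectangular path $h(\tau)$ of \eqref{mov_rect} lies in $\Hc$ (take $f_j$ to be the constant $f_j(\tau)$), and the tangent space $T_{h(\tau)}\Hc$ contains the four elementary perturbations $g_1,\ldots,g_4$ tabulated in Table \ref{table1}. The only non-routine task is to verify Hypothesis (SAH2) at each $h(\tau)$ for the first $N$ eigenvalues; the tubular-neighborhood argument of Theorem \ref{th_Teytel_bis} then produces the perturbed path $g$.

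To check (SAH2) at a fixed $\tau$, I would pick a multiple eigenvalue $\lambda$ among the first $N$. It corresponds to at least two distinct pairs of quantum numbers $(k_1,k_2)\neq(l_1,l_2)$ satisfying \eqref{resonance}, and I take as orthonormal eigenfunctions the modes $\phi_{k_1,k_2}$, $\phi_{l_1,l_2}$ of \eqref{modes}. The three linear functionals $I_{k_1,k_2,k_1,k_2}$, $I_{l_1,l_2,l_1,l_2}$, $I_{k_1,k_2,l_1,l_2}$ evaluated on $g_1,g_2,g_3,g_4$ are exactly the entries of Table \ref{table1}. By \eqref{rel1}, on $\mathrm{span}(g_1,g_2)$ the two diagonal functionals are already independent because $k_1^2/l_1^2\neq k_2^2/l_2^2$ (otherwise \eqref{resonance} would force $(k_1,k_2)=(l_1,l_2)$), while the off-diagonal functional vanishes identically there. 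Adding $g_3$ (when $k_2\not\equiv l_2\bmod 2$) or $g_4$ (when $k_2\equiv l_2\bmod 2$) yields a nonzero value of $I_{k_1,k_2,l_1,l_2}$, which settles the linear independence. If the multiplicity of $\lambda$ exceeds two, the eigenfunctions of a rectangle indexed by distinct pairs of quantum numbers are automatically pairwise orthogonal, so selecting any two of them and running the same argument gives a pair of orthogonal eigenfunctions meeting (SAH2).

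Since only the first $N$ eigenvalues are considered and $\tau$ varies in a compact set, the verification is uniform along the path. Applying Theorem \ref{th_Teytel_bis} then produces $g\in\Path^k([0,1],\Bc)$ with $g(\tau)\in\Hc$ for every $\tau\in(0,1)$, with $g(0)=h(0)$, $g(1)=h(1)$, $\|g-h\|_{\Cc^k([0,1]\times\Bc,\Bc)}<\varepsilon$, and with simple first $N$ eigenvalues of the Dirichlet Laplacian on $g(\tau,\Omega_0)$ for all $\tau\in(0,1)$. The main obstacle is really the verification of (SAH2) inside the restricted class $\Hc$: one must check that, despite excluding most deformations of the domain, the four directions $g_1,\ldots,g_4$ still provide enough flexibility to split every possible double eigenvalue of a rectangle. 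This is precisely what the explicit computations of Table \ref{table1} establish, and once this is in hand the statement follows from Theorem \ref{th_Teytel_bis} with no additional work.
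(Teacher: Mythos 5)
Your proof follows exactly the same route as the paper's own argument: reduce the statement to an application of Theorem \ref{th_Teytel_bis} on the restricted submanifold $\Hc$, and verify Hypothesis (SAH2) there via the explicit evaluations of the three boundary functionals on $g_1,\ldots,g_4$ in Table \ref{table1}, using \eqref{rel1} for the diagonal pair and the $g_3$/$g_4$ parity dichotomy for the off-diagonal one. The treatment of the higher-multiplicity case and the uniformity over the compact path are also handled as in the paper, so the proposal is correct and essentially identical in approach.
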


In our framework, it is noteworthy that $g_2$ can be recovered by $g_1$ composed by a homothety. Since this last transformation preserves the simplicity of the spectrum, we can always make the spectrum simple by perturbing only one edge of a rectangular shape: it is not necessary to deform two sides of the rectangle in order to avoid the eigenvalues crossings. We state this result in the following corollary where we denote by $\tilde \Hc$ the manifold of the diffeomorphisms $(y_1,y_2)\mapsto (f_1(y_2) y_1, y_2)$ with $f_1>0$ a polynomial of degree $2$.

\begin{coro}\label{co_Teytel_rect}
In Proposition \ref{prop_Teytel_rect}, if $b=b'$, we can strengthen the fact that $g$ belongs to $\tilde \Hc$ by constructing $g$ with suitable time-varying coefficients $\alpha,\ \beta$ and $\gamma$ such that
$$g(\tau)~:~ (y_1,y_2)\mapsto \big(\, (\alpha(\tau)+ \beta(\tau)y_2+\gamma(\tau)y_2^2 ) y_1\,,\,y_2\,\big).~$$
\end{coro}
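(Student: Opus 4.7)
The plan is to run the argument of Proposition \ref{prop_Teytel_rect} verbatim, but with the submanifold $\Hc$ of Theorem \ref{th_Teytel_bis} replaced by the smaller class $\tilde\Hc$. Since $b=b'$, one can take as starting path the straight rectangular path $\tau\mapsto h(\tau)$ with $f_2\equiv b$, which already belongs to $\tilde\Hc$ (after absorbing the vertical size $b$ into the reference domain). The whole task then reduces to verifying that hypothesis (SAH2) holds at each rectangular point of this path when the perturbations are restricted to $\tilde\Hc$; once this is done, Theorem \ref{th_Teytel_bis} delivers a perturbed path $g$ inside $\tilde\Hc$, and the very definition of $\tilde\Hc$ forces it to have the stated polynomial form.

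At a rectangular configuration $h:(y_1,y_2)\mapsto(ay_1,y_2)$, the tangent space $T_h\tilde\Hc$ is three-dimensional, consisting of variations whose push-forwards are of the form $(P(x_2)x_1,0)$ with $P$ a polynomial of degree $\leq 2$. I would pick the basis $\{g_1,g_4,g_5\}$ from Table \ref{table1}, whose associated polynomials $1$, $x_2/b-x_2^2/b^2$, $x_2^2/b^2$ are linearly independent. Given a double eigenvalue $\lambda_{k_1,k_2}=\lambda_{l_1,l_2}$ among the first $N$, the resonance equation together with $(k_1,k_2)\neq(l_1,l_2)$ forces both $k_1\neq l_1$ and $k_2\neq l_2$. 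Plugging the entries of Table \ref{table1} into the $3\times 3$ matrix $M$ of the three functionals $I_{k_1,k_2,k_1,k_2}$, $I_{l_1,l_2,l_1,l_2}$, $I_{k_1,k_2,l_1,l_2}$ evaluated on $(g_1,g_4,g_5)$, and exploiting the identities $I_{k_1,k_2,k_1,k_2}(g_4)+I_{k_1,k_2,k_1,k_2}(g_5)=\tfrac12 I_{k_1,k_2,k_1,k_2}(g_1)$ and the analogous one for $(l_1,l_2)$, together with $I_{k_1,k_2,l_1,l_2}(g_1)=0$, a short cofactor expansion along the last row should give
\[
\det M\;=\;\frac{2\pi^2 k_1^2 l_1^2 (k_2^2-l_2^2)}{a^6 k_2^2 l_2^2}\,\bigl(I_{k_1,k_2,l_1,l_2}(g_4)+I_{k_1,k_2,l_1,l_2}(g_5)\bigr).
\]
The first factor is non-zero because $k_2\neq l_2$; the second is non-zero because $I_{k_1,k_2,l_1,l_2}(g_5)$ never vanishes (the entry in Table \ref{table1} is parity-independent) while $I_{k_1,k_2,l_1,l_2}(g_4)$ is either zero or of the same sign. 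This yields (SAH2) in all cases.

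The main obstacle I anticipate is precisely the parity dichotomy visible in Table \ref{table1}: the naive basis $\{g_1,g_3,g_4\}$ of $T_h\tilde\Hc$ \emph{fails} (SAH2) when $k_2\equiv l_2\pmod 2$, because then $I_{k_1,k_2,l_1,l_2}$ vanishes on both $g_1$ and $g_3$ simultaneously, producing a zero column in $M$. The remedy is to replace $g_3$ by $g_5$: the two second-order-in-$y_2$ perturbations $g_4$ and $g_5$ together cover both parity cases. The conceptual reason why one does not need the rectangular vertical direction $g_2$, which is unavailable inside $\tilde\Hc$, is the observation pointed out before the statement: $g_2$ is the difference of a homothety and $g_1$, and a homothety $(x_1,x_2)\mapsto(\lambda x_1,\lambda x_2)$ only rescales every Dirichlet eigenvalue by $\lambda^{-2}$ without affecting multiplicities, so the $g_2$ direction is spectrally redundant for the transversality condition (SAH2).
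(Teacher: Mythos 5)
Your plan — verify (SAH2) directly inside $\tilde\Hc$ and invoke Theorem \ref{th_Teytel_bis} with that smaller class — is \emph{not} the route the paper takes, and unfortunately it cannot work: the condition (SAH2) actually \emph{fails} for $\tilde\Hc$ when $k_2 \equiv l_2 \pmod 2$. The paper's one-paragraph argument (stated just before the corollary) is instead a \emph{gauge} argument: a $g_2$-perturbation is the difference of a homothety and a $g_1$-perturbation; since a homothety rescales every Dirichlet eigenvalue by the same factor and thus cannot create or destroy a multiplicity, one first produces the simple-spectrum path inside the constant-$f_2$ class $\Hc_0$ spanned by $g_1,g_2,g_3,g_4$ (where (SAH2) is verified in Table \ref{table1}), and then factors out the homothety component; what remains is a path in $\tilde\Hc$ with the same spectral multiplicities. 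The hypothesis $b=b'$ is what guarantees the homothety factor is trivial at the two endpoints, so the factored path has the required boundary values.

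The concrete gap in your determinant computation is the treatment of signs. The entries in Table \ref{table1} are the \emph{absolute values} $I$ of the relevant boundary integrals; (SAH2) requires linear independence of the \emph{signed} functionals $J_{m_1,m_2,n_1,n_2}(g)=\int_{\partial\Omega}\partial_\nu\phi_{m_1,m_2}\,\partial_\nu\phi_{n_1,n_2}\,\langle h_*g|\nu\rangle\,\dd\sigma$, which are linear in $g$. You correctly reduce the determinant to $D\bigl(J_{k_1,k_2,l_1,l_2}(g_4)+J_{k_1,k_2,l_1,l_2}(g_5)\bigr)$ with $D\neq 0$, but your claim that $J_{kl}(g_4)$ is ``either zero or of the same sign'' as $J_{kl}(g_5)$ is false. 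The push-forwards satisfy the exact linear relation $h_*g_4+h_*g_5 = h_*g_3+\tfrac12\, h_*g_1$, so by linearity
$$J_{k_1,k_2,l_1,l_2}(g_4)+J_{k_1,k_2,l_1,l_2}(g_5)=J_{k_1,k_2,l_1,l_2}(g_3)+\tfrac12\, J_{k_1,k_2,l_1,l_2}(g_1)=0$$
whenever $k_2\equiv l_2\pmod 2$, since both terms on the right vanish for that parity; a direct computation of the boundary integrals confirms that $J_{kl}(g_4)=-J_{kl}(g_5)$ with equal magnitude. Consequently $\det M=0$, and in the basis $\{g_1,g_3,g_4\}$ this is transparent: $g_3$ is annihilated by all three functionals in the case $k_2\equiv l_2$, so the functionals restricted to the $3$-dimensional space $T_h\tilde\Hc$ have rank at most $2$. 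Thus the ``parity dichotomy'' you anticipate as an obstacle and believe to have circumvented via $g_5$ is in fact unavoidable within $\tilde\Hc$: $g_5$ lies in the span of $g_1,g_3,g_4$ and brings no new information. The vertical degree of freedom $g_2$ cannot be dropped at the level of (SAH2); it must be traded away by the homothety observation of the paper, not by a direct transversality check in $\tilde\Hc$.
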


\FloatBarrier

\subsection{Global approximate controllability}\label{section_global_rect}
In this section, we present how to approximately control quantum states defined on a two dimensional rectangle by moving its borders. The control is obtained by coupling the two arguments presented above:
\begin{itemize2}
\item[-] It is possible to drive a decoupled state to another decoupled state by changing the dimensions of the rectangle (Proposition \ref{control_bilinear_decoupled}).
\item[-] Adiabatic motions of the rectangle, including slight deformations of a side, preserve the distribution of the energy (Section \ref{section_break}).
\end{itemize2}
It is noteworthy that the result of this section is a particular case of Theorem \ref{th_main}. However, the strategy of control presented here is different from the one of the proof in Section \ref{section_proof}. It underlines that our arguments are generally robust and they provide useful tools for different situations and aims. 
\begin{prop}\label{control_bilinear_rec}
Let $\Omega^\ii=(0,a)\times(0,b)$ with  $a,b>0$. Let $u^\ii, u^\ff\in L^2(\Omega^\ii)$ satisfy $\|u^i\|_{L^2}=\|u^f\|_{L^2}$. For every $\varepsilon>0$, there exist $T>0$ and a family moving domains $\{\Omega_t\}_{t\in (0,T)}$ such that $\Omega(0)=\Omega(T)=\Omega^\ii$ and such that the solution of the corresponding dynamics \eqref{SE} with initial data $u(t=0)=u^\ii$ satisfies $$\|u(t=T)-u^\ff\|_{L^2}\leq \varepsilon.$$  
 \end{prop}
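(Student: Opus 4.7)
By the time-reversibility argument of Section \ref{section_prelim}, it is enough to drive $u^i$ to the scaled ground state $A\phi_{1,1}$ (with $A=\|u^i\|_{L^2}$) using a loop of rectangular/quasi-rectangular deformations, and then separately connect $A\phi_{1,1}$ to $u^f$ by a reversed loop; concatenating the two and invoking the unitarity of the flow gives the claim. Truncating the eigenmode expansion $u^i = \sum c_{k_1,k_2}\phi_{k_1,k_2}$ to a finite index set $F$ produces a finite-dimensional approximation $\tilde u^i$ with $\|u^i - \tilde u^i\|_{L^2}\leq \varepsilon/C$, and unitarity ensures that this initial error propagates without amplification.

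The core of the strategy adapts the basic step Lemma \ref{lemma_step} to the rectangular setting, replacing the dumbbell mechanism by the bent-edge perturbations of Table \ref{table1} and the $1$D bilinear control of Proposition \ref{control_bilinear_decoupled}. Varying $a(t),b(t)$ makes the spectral curves $\lambda_{k_1,k_2}(t) = \pi^2 k_1^2/a(t)^2 + \pi^2 k_2^2/b(t)^2$ cross transversally, but while we stay in the class of rectangles the separability of $-\Delta$ forbids any coupling between modes with different quantum numbers, so purely rectangular adiabatic motion at best permutes the labeling of eigenstates (Proposition \ref{prop_adiabatic_square}). To create an actual transfer of amplitude between $\phi_{k_1,k_2}$ and $\phi_{l_1,l_2}$ near a crossing, we briefly leave the rectangular class and insert a bent-edge perturbation of type $g_3,g_4$ or $g_5$ from Table \ref{table1}; by Corollary \ref{co_Teytel_rect} this turns the exact crossing into an avoided one with a controlled spectral gap, and a careful tuning of the speed of this perturbation redistributes a prescribed fraction $\alpha\in[0,1]$ of the energy between the two modes, exactly as in the proof of Lemma \ref{lemma_step} but with the continuity estimates of Propositions \ref{prop_H1bound}--\ref{prop_difference} playing the role of the dumbbell limit Theorem \ref{th_cv_spectrum}. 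Iterating this basic step (alternating avoided crossings with adiabatic rectangular transport in the spirit of Section \ref{section_proof}), we concentrate all the amplitude on a product state $u_1(x_1)u_2(x_2)$. At that point, Proposition \ref{control_bilinear_decoupled} drives the product state to $A\phi_{1,1}$ (itself a product), and the phases accumulated along the way are corrected either through Lemma \ref{lemma_phases} combined with Theorem \ref{th_irrational} or directly by the rational-independence trick at the end of the proof of Proposition \ref{control_bilinear_decoupled}.

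The principal obstacle is the quantitative version of this rectangular analog of Lemma \ref{lemma_step}. Unlike the dumbbell case, where the near-separation of the spectrum is a \emph{spatial} phenomenon controlled by the channel thickness $\eta$, here the coupling between modes is created by the shape perturbation itself, whose amplitude must be simultaneously small enough for the transport between crossings to remain essentially adiabatic (so the distribution of energy on the other modes is preserved) and large enough for the avoided crossings to open a spectral gap allowing a controlled Landau--Zener-type transfer in finite time. Striking this balance requires combining the continuity bounds of Section \ref{section_moving} with the transversality arguments of Theorem \ref{th_Teytel_bis} applied, at each crossing, within the restricted class $\Hc$ of Corollary \ref{co_Teytel_rect}; once this analog of Lemma \ref{lemma_step} is secured, the remaining assembly of the proof mirrors Section \ref{section_proof} with Proposition \ref{control_bilinear_decoupled} providing the final stretch and generic rational independence taking care of the phases.
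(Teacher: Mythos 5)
Your route diverges substantially from the paper's, and it leaves a gap that the paper deliberately avoids. You propose to reproduce the level-by-level tunneling scheme of Section \ref{section_proof} inside the rectangular class, with bent-edge perturbations from Table \ref{table1} playing the role of the dumbbell channel, and you explicitly flag that "the principal obstacle is the quantitative version of this rectangular analog of Lemma \ref{lemma_step}." That obstacle is genuine and is not resolved by the estimates you invoke. The proof of Lemma \ref{lemma_step} hinges on the singular convergence of Theorem \ref{th_cv_spectrum}: as the channel width $\eta\to 0$, the dumbbell spectrum and the two-dimensional spectral projector converge to those of a \emph{disconnected} domain, which is exactly what makes properties (vi)--(viii) in that proof hold with explicit closeness to the limiting states $\varphi^L,\varphi^R_K$. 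For a rectangle with a slightly bent edge there is no analogous singular limit; Propositions \ref{prop_H1bound}--\ref{prop_difference} give Lipschitz-in-$h$ continuity of the flow, not the asymptotic splitting of the spectrum into two nearly independent families that drives the argument. Corollary \ref{co_Teytel_rect} only guarantees that one can make the first $N$ eigenvalues simple by a small bend; it says nothing quantitative about the size of the resulting gap, the speed needed for a prescribed Landau--Zener-type transfer, or the resulting error, so the claim that one can "redistribute a prescribed fraction $\alpha$" by tuning the speed is asserted rather than established.

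The paper's own proof of Proposition \ref{control_bilinear_rec} is both shorter and sidesteps this entirely. Rather than concentrating onto a product state via iterated avoided crossings, it first deforms adiabatically to a strongly elongated rectangle $\Omega_1=(0,a_1)\times(0,b_1)$ with $a_1\gg b_1$; the first $N$ eigenmodes then all have the form $\sin(j\pi x_1/a_1)\sin(\pi x_2/b_1)$ with the \emph{same} vertical quantum number $k_2=1$, so the whole relevant span is automatically a one-parameter family of decoupled states. This reduces the entire energy-redistribution stage to a single application of Proposition \ref{control_bilinear_decoupled} (effectively a one-dimensional bilinear control in $x_1$), and the only place where Proposition \ref{prop_Teytel_rect} and avoided crossings enter is the adiabatic return to $\Omega^\ii$ while preserving the simple-ordering of the first $N$ modes. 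Your sketch could in principle be salvaged, but only by first proving a new quantitative avoided-crossing lemma for generic shape perturbations of a rectangle --- a nontrivial statement not contained in the paper and not a consequence of the tools you cite. As written, the proposal has a genuine missing step.
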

\begin{proof}
In what follows, we denote by $\varphi_j^{\Omega}$ the $j-$th eigenmode of  Dirichlet Laplacian on domain $\Omega.$  In order to control any couple of states in $L^2(\Omega_0)$, it is sufficient to drive the ground state $\varphi_1^{\Omega_0}$ close to any state with norm $1$. Without loss of the generality, we can assume that the target state $u\in L^2(\Omega_0)$ is a linear combination of a finite number of eigenmodes such that
$$u= \sum_{j=1}^Nc_j\varphi_j^{\Omega_0},\ \ \ \  \ \ \  \ \ \ \ \ \{c_j\}_{j\leq N}\subset\CC,\ \ \ \  \ \ \  \ \ \ \ \ \sum_{j=1}^N|c_j|^2=1.$$
In the first step, we deform adiabatically $\Omega_0$ in a rectangle $\Omega_1=(0,a_1)\times(0,b_1)$. It preserves the energy of the ground state as the first eigenvalue of a Dirichlet Laplacian on a connected domain is always simple. We choose $a_1\gg b_1$ so that the first $N$ modes of the Dirichlet Laplacian in $\Omega_1$ have the form
\begin{equation}\label{cond11}
\varphi_j^{\Omega_1}(x_1,x_2)=\frac{2}{\sqrt{a_1b_1}}\sin\Big(\frac {j \pi}{a_1} x_1\Big)\sin\Big(\frac { \pi}{b_1} x_2\Big),\ \ \ \  \ \ (x_1,x_2)\in\Omega_1,\ \ \ \forall j\leq N.
\end{equation}
This first motion steers $\varphi_1^{\Omega_0}$ close to $e^{i\theta }\varphi_1^{\Omega_1}$.
In the second step, we use Proposition \ref{control_bilinear_rec} to drive 
$$e^{i\theta }\varphi_1^{\Omega_1}= \frac{2}{\sqrt{a_1b_1}} e^{i\theta} \sin\Big(\frac {\pi}{a_1} x_1\Big)\sin\Big(\frac {\pi}{b_1} x_2\Big)$$ 
close to
\begin{align*}
\sum_{j=1}^N c_j e^{i\theta_j}  \varphi_j^{\Omega_1}(x_1,x_2)~&=~\sum_{j=1}^N c_j \frac{2}{\sqrt{a_1b_1}} e^{i\theta_j}  \sin\Big(\frac {j \pi}{a_1} x_1\Big)\sin\Big(\frac {\pi}{b_1} x_2\Big)\\
&=~ \sqrt{\frac{2}{b_1}} \sin\Big(\frac {\pi}{b_1} x_2\Big) \left(\sum_{j=1}^N c_j \sqrt{\frac{2}{a_1}} e^{i\theta_j} \sin\Big(\frac {j \pi}{a_1} x_1\Big) \right)
\end{align*}
where the phases $\{\theta_j\}_{j\leq N}$ will be defined later. Notice that the states above are decoupled so Proposition \ref{control_bilinear_rec} may apply. In fact, we only need to control the horizontal part of the state, so it is mainly a one-dimensional result. The trick is to choose $\Omega_1$ sufficiently close to an horizontal segment to ensure that the relevant modes are all ``horizontal''. Finally, we can deform back $\Omega_1$ in $\Omega_0$ adiabatically by avoiding all the crossing of the $N$ first eigenvalues. This last motion is defined by applying the adiabatic regime to the path provided in Proposition \ref {prop_Teytel_rect}.  
This allows us to preserve the distribution of the energy (up to a small error) but it adds some phases to the modes and we obtain (approximatively) the state 
$$u= \sum_{j=1}^N c_j e^{i\theta_j}e^{i\rho_j} \varphi_j^{\Omega_0} $$
where $\rho_j$ do not depend on $\{\theta_j\}_{j\leq N}$. The values $\{\rho_j\}_{j\leq N}$ are well-known in advance and then we can program $\{\theta_j\}_{j\leq N}$ in order to remove all the phases appearing at the end of the motion.
\end{proof}

\subsection{Examples of applications to simple transformations of the states}\label{section_examples}
In this subsection, we present some explicit examples of controls and permutations of modes due to the techniques developed in this work. In what follows, we denote $\Omega=(0,a)\times(0,b)$ with $a>b>0$.

\vspace{5mm}

\noindent{\bf \underline{Switchin}g\underline{ the }q\underline{uantum numbers}.}\\[1mm]
The following is a completely adiabatic deformation of $\Omega$ steering any mode $\phi_{j,k}$ to $\phi_{k,j}$. First, we deform the dimension of the rectangle $(0,a)\times (0,b)$ into $(0,b)\times (0,a)$ adiabatically. Proposition \ref{prop_adiabatic_square} ensures that we follow the mode $\phi_{j,k}$. Then we simply rotate adiabatically the rectangle $(0,b)\times(0,a)$ in $(0,a)\times(0,b)$ and the state becomes $\phi_{k,j}$. See Figure \ref{ex1}.

\begin{figure}[ht]
\begin{center}
\includegraphics[width=\textwidth]{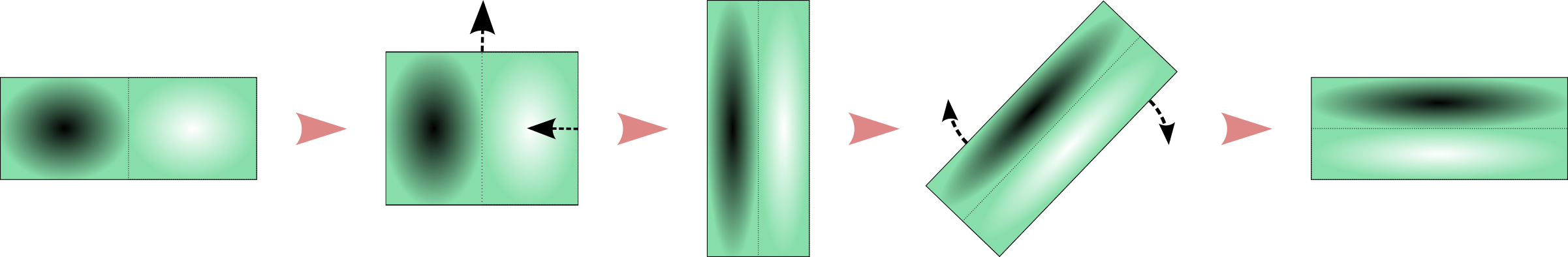}
\end{center}
\caption{\it The figure represents an adiabatic deformation of a rectangle steering $\phi_{2,1}$ in $\phi_{1,2}$.} \label{ex1} 
\end{figure}

\vspace{5mm}

\noindent{\bf \underline{Pum}p\underline{in}g\underline{ an ei}g\underline{enstate into another}.}\\[1mm]
Here, we provide an adiabatic deformation of $(0,a)\times(0,b)$ steering the mode $\phi_{j,k}$ in $\phi_{j',k'}$ as soon as neither mode is the first one. Assume that our aimed state $\phi_{j',k'}$ corresponds to the $p$-th mode. 
\begin{enum2}
\item First, we modify the horizontal edge $(0,a)$ in $(0,a')$ such that in the rectangle $(0,a')\times(0,b)$, the mode $\phi_{j,k}$ corresponds to the $p$-th eigenvalue. This is always possible as soon as $(j,k)\neq (1,1)$ and $p>1$. Proposition \ref{prop_adiabatic_square} shows that, if the motion is sufficiently slow, then we actually drive 
the mode $\phi_{j,k}$ of the rectangle $(0,a)\times(0,b)$ to the mode $\phi_{j,k}$ of the rectangle $(0,a')\times(0,b)$.
\item Second, we deform back the domain in $(0,a)\times(0,b)$ by breaking the rectangular shape of the domain in order to avoid all the eigenvalue crossings. To this purpose, we use Proposition \ref{prop_Teytel_rect} or even Corollary \ref{co_Teytel_rect} and we can stay very close to the family of rectangles of height $b$. Due to Proposition \ref{prop_adiabatic}, the $p$-th mode $\phi_{j,k}$ of the rectangle $(0,a')\times(0,b)$ is transformed into the $p$-th mode of the rectangle $(0,a)\times(0,b)$, which is $\phi_{j',k'}$ by assumption. 
\end{enum2}
Figure \ref{ex2} illustrates the change of $\phi_{2,1}$ into $\phi_{1,2}$. Notice that for concrete applications, when we deform back the domain to the original rectangle, it could be simpler to break the symmetry by adding a generic electric potential rather than tilting or bending the edges. 

\begin{figure}[ht]
\begin{center}
\input{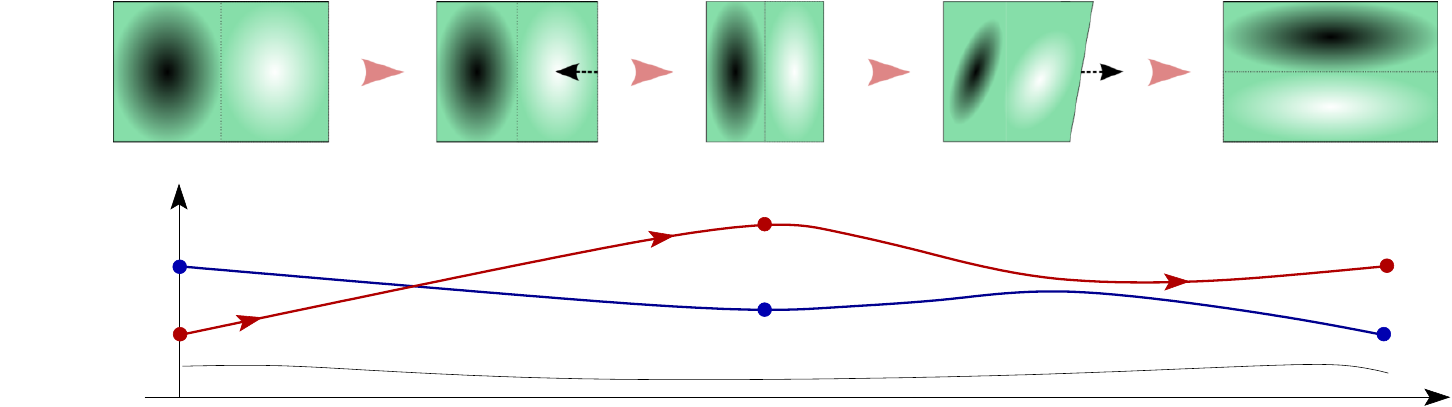_t}
\end{center}
\caption{\it The figure represents a deformation of a rectangle steering $\phi_{2,1}$ in $\phi_{1,2}$. We start with a rectangle $(0,a)\times(0,b)$ with $a>b$ and we reduce the length $a$ to $a'<b$. The energy levels of the states $\phi_{2,1}$ and $\phi_{1,2}$ cross and during an adiabatic motion, the state follows the eigenstate $\phi_{2,1}$. Then, we go back to the original rectangle with a slight deformation of the edge. Here, we break the symmetry allowing to preserves the order of the modes. During an adiabatic motion, the state follows the third eigenstate which is $\phi_{1,2}$ at the end of the motion. Notice that if $a$ and $a'$ are close enough to $b$, then the only possible energy crossing (that we need to avoid when we go back) concerns $\phi_{2,1}$ in $\phi_{1,2}$. Table \ref{table1} shows that a slight tilt of the right edge is sufficient.} \label{ex2} 
\end{figure}

\vspace{5mm}

\noindent{\bf \underline{Creatin}g\underline{ su}p\underline{er}p\underline{osition of excited states}.}\\[1mm]
As we have noticed in the previous examples, different adiabatic deformations of the initial rectangle $\Omega$ yield to different results, depending if we allow the eigenvalues to cross or not.
For example, consider again the mode $\phi_{2,1}$ in a domain $(0,a)\times(0,b)$ where $a>b$. We observe the following phenomena.
\begin{enum2}
\item  If we contract adiabatically the domain by preserving its rectangular shape, then the mode $\phi_{2,1}$ is double when $\Omega$ becomes the square $(0,b)\times (0,b)$. If we continue the reduction slowly to obtain a rectangle $(0,a')\times (0,b)$ with $a'<b$, then the final mode is still $\phi_{2,1}$ due to Proposition \ref{prop_adiabatic_square}. 
\item When we suitably modify this dynamics by breaking the rectangular symmetry of $\Omega$, using Corollary \ref{co_Teytel_rect}, the rank of $\phi_{2,1}$ is preserved and, then, it is steered in the second eigenmode of the new domain, which is not $\phi_{2,1}$ anymore but $\phi_{1,2}$ if $a'<b$.
\end{enum2}
From a spectral point of view, both motions follow the spectral curve associated to the mode $\phi_{2,1}$ until we reach the eigenvalue crossing involving $\phi_{2,1}$ and $\phi_{1,2}$. If we continue to reduce adiabatically the rectangle, then we pass through the crossing by pursuing the mode $\phi_{2,1}$. When we adiabatically modify the shape of $\Omega$ in order to preserve the simplicity of the spectrum, we follow $\phi_{1,2}$ instead. 

Now, assume that we choose an intermediate deformation interpolating the motions (i) and (ii) above. Then, by the intermediate value theorem, it is possible to distribute the initial energy of $\phi_{2,1}$ between the modes $\phi_{2,1}$ and $\phi_{1,2}$, see Figure \ref{ex3}. This idea permits to steer $\phi_{2,1}$ into any superposition of $\phi_{2,1}$ and $\phi_{1,2}$. Notice that the speed of the intermediate motion is obviously not adiabatic for this regime but has been set to be slow enough so that both pure motions (i) and (ii) are adiabatic.
The same technique, applied to a finite number of modes, allows to control any superposition of excited states in any other (similarly to what happens in Figure \ref{fig-spectrum}). Notice that the ground state $\phi_{1,1}$ can not be adiabatically controlled in this way as it is always simple. A possible solution is to deal with the ground state via the control method used in Propositions \ref{control_bilinear_decoupled} and \ref{control_bilinear_rec}.

\begin{figure}[ht]
\begin{center}
\resizebox{13cm}{!}{\input{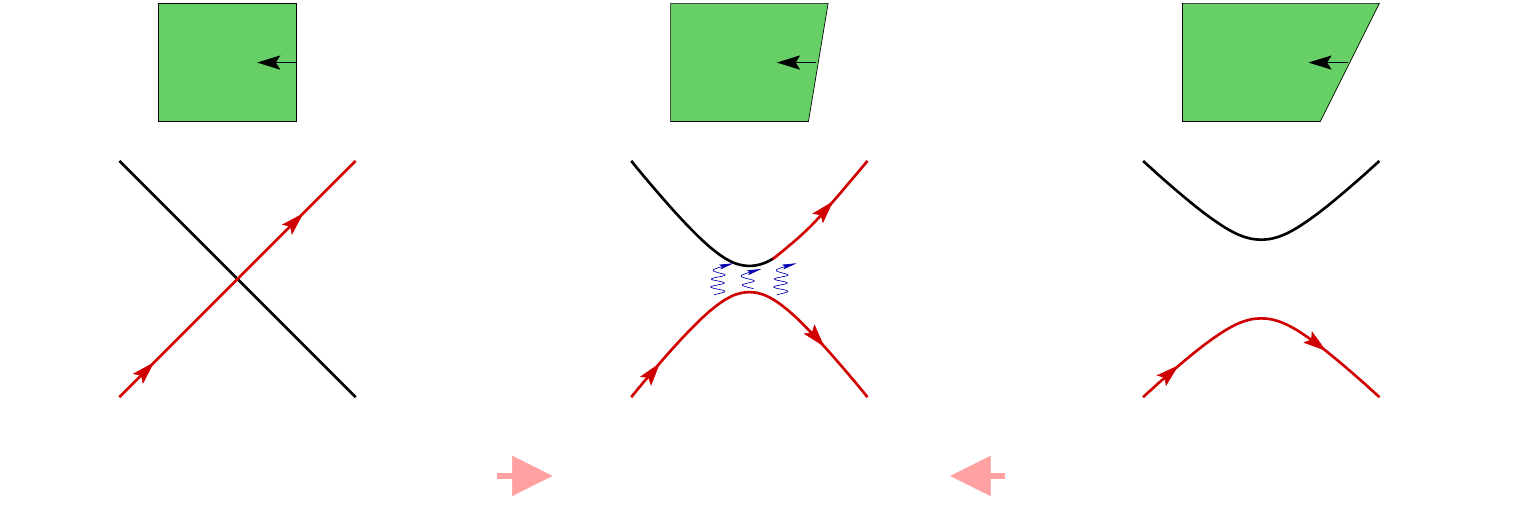_t}}
\end{center}
\caption{\it The figure shows how to use the tunneling effect to create superposed states. On the left, we pass adiabatically an eigenvalue crossing with a rectangular deformation and the quantum numbers of the state do not change thanks to Proposition \ref{prop_adiabatic_square}. On the right, we break the symmetry and there is not any real crossing of eigenvalues anymore. Here, Corollary \ref{co_Teytel_rect} yields that an adiabatic motion preserves the position in the spectrum and it switches the pure states. In the middle, we follow an interpolated deformation with the same speed, which produces an interpolated state. Physically, we use the tunneling effect to create a superposed state.} \label{ex3} 
\end{figure}

Let us also discuss the possibility of using the conical intersections of eigenvalues, following the method introduced in \cite{ugo1}. Consider a family of shapes $\Omega(\alpha,\beta)$ parametrized by two real parameters. Assume that, in the domain $\Omega(0,0)$, the Laplacian operator has an eigenvalue $\lambda_{j}=\lambda_{j+1}$ of multiplicity two. This intersection is conical if the local dependance of the eigenvalues with respect to the parameters satisfies  
\begin{equation}\label{conical}
\exists C,\varepsilon>0~,~~\forall (\alpha,\beta)\in B_{\RR^2}(0,\varepsilon)~,~~\lambda_{j+1}(\alpha,\beta)-\lambda_{j}(\alpha,\beta)\,>\,C(|\alpha|+|\beta|)
\end{equation}
(it is in fact possible to deal with an intersection of more eigenvalues by considering more parameters). In \cite{ugo1}, the authors provide a way to approximately control the state inside the $j$ and $(j+1)$ level sets by using adiabatic deformations of the parameters $\alpha$ and $\beta$. The conical intersections are generic patterns and we can use this type of idea to realize the exchange of energy between different levels, even in the proof of our Theorem \ref{th_main}. However, it is noteworthy that \eqref{conical} cannot hold for rectangular shapes with $\alpha$ and $\beta$ being the size of the rectangle: due to the homothetic invariance, the degeneracy $\lambda_{j}=\lambda_{j+1}$ remains true in a direction $(\alpha,\beta)$. In other words, the family of rectangles behaves as a one-parameter family of domains from the point of view of crossing of the eigenvalues. It means that, similarly to all the previous proposed strategies, we have to seek for conical intersections by slightly breaking the symmetry of the rectangle to obtain a more generic shape and one parameter has to deform the shape away from the family of rectangular shapes. Then the problem of checking \eqref{conical} is equivalent to computations as those of Table \ref{table1}.

\vspace{5mm}

\noindent{\bf \underline{Controls on }q\underline{uasi-rectan}g\underline{ular domains}.}\\[1mm]
Every result presented above is not only guaranteed for the rectangles, but also for domains which are very close to it in the meaning of Theorem \ref{th_cv_spectrum}. In this situation, the spectral behavior of the Hamiltonian generating the dynamics is very close to the one on a rectangle and then, all the techniques above are still valid, up to a small error depending on the domain. 

To apply one of these control processes to a general domain, we can proceed as follows. We can deform the domain adiabatically back-and-forth to an almost rectangular one by preserving the simplicity of the spectrum. While the domain is quasi-rectangular, we apply the prescribed control (see Figure \ref{ex4}).  

\begin{figure}[ht]
\begin{center}
\includegraphics[width=\textwidth-50pt]{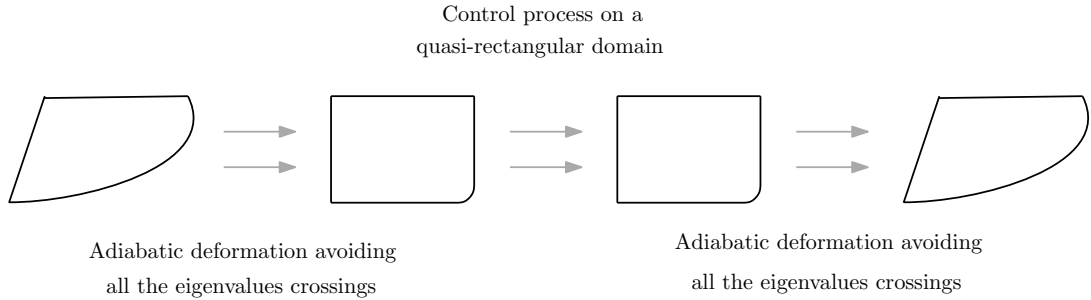}
\end{center}
\caption{\it The figure represents how to apply to the case of general domains the controls discussed before. The first and the last motions are back-and-forth adiabatic  deformations of the domain in a quasi-rectangular one. In the intermediate step instead, we apply the chosen control process.} \label{ex4} 
\end{figure}

\subsection{Pumping motion and Fermi acceleration}\label{section_pumping}
In this section, we discuss in more details the pumping motion introduced above and represented in Figure \ref{ex2}. 

Consider a free quantum-mechanical particle in a rectangular box $[0,a]\times [0,1]$. The eigenfunctions and their corresponding energies are respectively 
$$\phi_{m,n}= \sin\big(\pi m\frac xa\big)\sin\big(\pi ny\big)~~\text{ and }~~E(m,n)= \pi^2 \big(\frac {m^2}{a^2} + n^2\big)$$
where $m,n\geq 1$. We slowly change the length of the rectangular box until reaching the shape $[0,\tilde a]\times [0,1]$, where the eigenfunctions and energies are now 
$$\tilde \phi_{m,n}= \sin\big(\pi m\frac x{\tilde a}\big)\sin\big(\pi ny\big)~~\text{ and }~~\tilde E(m,n)= \pi^2 \big(\frac {m^2}{\tilde a^2} + n^2\big)$$
During the deformation, in the adiabatic limit, the quantum numbers $m$ and $n$ are preserved, i.e., if the process is slow enough, then by starting with the state $\phi_{m,n}$, the system will find itself close to the state $\tilde \phi_{m,n}$. Next, we return the box to its original size and, on the way back, we slowly deform the boundary of the box in such manner that it has a generic non-rectangular shape all the time, except at the start and at the end of the process. The genericity means no crossing of the energy levels, i.e., the instantaneous energy spectrum stays simple, see Section \ref{section_break}. As already noticed, this break of symmetry can also be performed by keeping the rectangular shape and adding a generic non-symmetric potential, which could be simpler in practice. Assume that $a$ and $\tilde a$ are irrational, so that both corresponding spectra in the boxes are simple. Thus, following the total ordering according to the increase of energy, we can define the number $\tilde k(m,n)$ such that $\tilde \phi_{m,n}$ is the $\tilde k(m,n)$-th eigenfunction of the Laplacian in $[0,\tilde a]\times [0,1]$. This number is adiabatically preserved when we return the box to its original shape, meaning that the quantum state finishes close to $\phi_{m',n'}$ such that $k(m',n')=\tilde k(m,n)$ (where $k(m',n')$ is similarly defined by the fact that $\phi_{m',n'}$ is the $k(m',n')$-th eigenfunction of the Laplacian operator in $[0,a]\times [0,1]$).

We obtain a cyclic process such that at its first stage the quantum numbers $m$ and $n$ are preserved, while at the second stage the ordering $k$ of the energy is preserved. Hence the values of $m$ and $n$ at the end of the cycle do not need to be the same as at the beginning. It generates a permutation $\sigma:\NN\rightarrow \NN$ defined by 
$$\sigma(k)=\tilde k(m(k),n(k))$$
with the obvious (abusive) notations that the numbers $m(k)$ and $n(k)$ are uniquely determined by $k(m(k),n(k))=k$. 

\vspace{3mm}

It would be interesting to understand the dynamics of the iterations of this permutation. Indeed, if the pumping motion is slow enough, the quantum state will be successively transformed in the $\sigma^j(k)$-th eigenmode, $j=1,2,3,\ldots$ for as many cycles as we want. In simpler one-dimensional models, a similar permutation process can be explicitly studied, see \cite{one-D,Dmitry}. Here, the existence of two quantum numbers $m$ and $n$ make the rigorous study much more involved. However, we conjecture that typically, $\sigma^j(k)$ grows exponentially with the number of iterations, i.e. the corresponding physical process should exhibit an exponential energy growth. 

There can only be two type of dynamics for bijections of the set of natural numbers: the orbit is either periodic (looped), or it tends to infinity at forward and backward iterations. This depends on initial conditions; however, we conjecture that for a generic choice of $a$ and $\tilde a$, only a small set of initial conditions produce looped orbits. Moreover, for a
typical non-looped orbit, we have
$$ \liminf_{j\rightarrow +\infty } \frac 1j \ln(\sigma^j(k)) > 0~.$$
Physical justification to this claim is given by the second law of thermodynamics: the entropy cannot decrease. Indeed, by definition, the Gibbs volume entropy is the logarithm of the number of states below the given energy level, i.e., it is 
$\ln k$ (see also \cite{HHD,Pereira-Turaev}). We, therefore, expect that the increment of $\ln k$ (the entropy) after each cycle should, typically, be strictly positive. The resulting exponential growth of the energy generated by a periodic motion of a wall is the quantum version of the famous Fermi acceleration.

\vspace{3mm}

To substantiate the growth of entropy claim, we present the following computation. Consider $m,n\geq 1$. In the starting rectangle $[0,a]\times [0,1]$, the numbers $(m',n')\in(\NN^*)^2$ related to an eigenmode $\phi_{m',n'}$ with energy $E(m',n')$ less that $E(m,n)$ are exactly the integer points contained in the ellipse $\{(x,y), \pi^2(x^2/a^2+y^2)\leq E(m,n)\}$. Up to a lower order term, this number is a quarter of the surface of the ellipse and thus  
$$k(m,n)=\frac {\pi a}{4}\big(\frac {m^2}{a^2} + n^2 \big)+o(m^2+n^2)~.$$
In the same way, in the intermediate rectangle $[0,\tilde a]\times [0,1]$, 
$\tilde k(m,n)\sim \frac {\pi \tilde a}{4}\big(\frac {m^2}{\tilde a^2} + n^2 \big)$.
This provides a good estimation for $\tilde k(m,n)$ and thus a good estimation for $\sigma(k)$ if $(m(k),n(k))$ is known. But obtaining $(m(k),n(k))$ from $k$ is very complicated. Thus, we rather consider the mean value of the entropy increase 
$$\delta \Ec(K):=\frac 1K \sum_{k=1}^K \ln(\sigma(k))-\ln(k)$$
for the first $K$ states. Let $E_0:=E(m(K),n(K))$ be the energy of the $K-$th mode in the rectangle $[0,a]\times [0,1]$. The arguments above show that it can be estimated as follows. Let $\Qc=\{(x,y)\in\RR_+^2, \pi^2(x^2/a^2+y^2)\leq E_0\}$ be the quarter of the ellipse corresponding to $E_0$ and $\Qc'=\{(x',y)\in\RR_+^2, \pi^2({x'}^2+y^2)\leq E_0\}$ the quarter of the disk. We have 
\begin{align*}
\delta \Ec(K) &= \frac 1{\text{Vol}(\Qc)} \int_\Qc \ln\big(\frac {\pi \tilde a}4\big(\frac {x^2}{\tilde a^2} + y^2 \big) \big)\,-\,\ln\big(\frac {\pi  a}4\big(\frac {x^2}{a^2} + y^2 \big)  \big)\dd x\dd y~+o(1).\\
&= \frac 1{\text{Vol}(\Qc')} \int_{\Qc'} \ln\big(\frac {a}{\tilde a}{x'}^2 + \frac {\tilde a}a y^2 \big)\,-\,\ln\big({x'}^2 + y^2 \big)\dd x'\dd y~+o(1).\\
&= \frac{4\pi}{E_0} \int_0^{\sqrt{E_0}/\pi} r\dd r \, \int_0^{\pi/2} \ln\big(\frac {a}{\tilde a}\cos^2\theta + \frac {\tilde a}a \sin^2\theta \big)\,-\,\ln\big(\cos^2\theta + \sin^2\theta \big) \dd \theta         ~+o(1).\\
&= \frac 2\pi \int_0^{\pi/2} \ln\big(\frac {a}{\tilde a}\cos^2\theta + \frac {\tilde a}a \sin^2\theta \big) \dd \theta ~ +o(1).
\end{align*}
This growth in mean of the entropy can be estimated numerically. In any case, we know that it is positive. Indeed, the strict concavity of the logarithm ensures that 
\begin{align*}
\int_0^{\pi/2} \ln\big(\frac {a}{\tilde a}\cos^2\theta + \frac {\tilde a}a \sin^2\theta \big) \dd \theta
~>~  \int_0^{\pi/2} &\cos^2\theta\ln\frac {a}{\tilde a} + \sin^2\theta \ln \frac {\tilde a}a  \dd \theta\\
&=~ \ln\frac {a}{\tilde a} \int_0^{\pi/2} \cos^2\theta - \sin^2\theta \dd \theta~=~0.
\end{align*}

One should note that the positivity of $\liminf_{K\rightarrow \infty} \delta \Ec(K)$ is not enough to conclude that $\sigma$ generates long growing orbits. For example, if we consider the pumping that alternates the lengths $a$ and $\tilde a=1/a$, then this will only lead to transpositions, i.e., every orbit will loop after the second iteration. Still, we believe that for generic values of $a$ and $\tilde a$, the iterations of $\sigma$ have the same characteristic as the one of a random process with fast decaying correlations. In the random case, the limit of $\delta\Ec$ is the expectation of the increase of the random sequence $i\mapsto \ln(\sigma^i (k))$, and a Chebyshev-like inequality shows that, almost surely, $i\mapsto \sigma^i (k)$ grows exponentially.

We did numerical experiments to test this prediction. For $a=\pi/2$ and $\tilde a=a/3$, we computed the energies of the first  modes and thus built a table of the values of $\sigma(k)$ for all $k\leq 370800$. The computed value of $\delta \Ec(10^5)$ is $0.28713$, whereas the above integral estimation predicts $0.28768$. An illustration of some orbits of the permutation is given in Figure \ref{fig-permutation}. To investigate our conjecture that periodic orbits are very rare and perhaps in finite number, we looked for the periodic orbits starting with $k\leq 10^5$, with period less than $30$ and never growing above the rank $370800$ (the limit of our computed permutation). We only found $9$ periodic orbits, only two being more complicated than transpositions (see Figure \ref{fig-permutation}).

\begin{figure}[ht]
\begin{center}
\includegraphics[width=14cm]{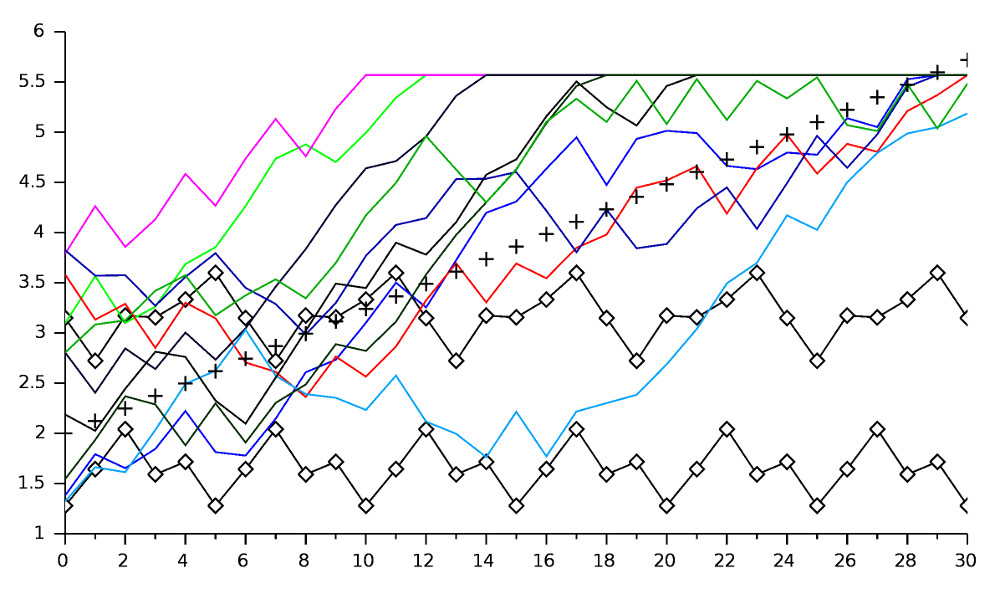}
\end{center}
\caption{\it Some trajectories $j\mapsto \sigma^j(k)$ generated by the permutation associated to the pumping motion with $a=\pi/2$ and the ratio of compression $a/\tilde a=3$. The horizontal axis indicates the ``time'' index $j$ and the vertical one displays $\log_{10}(\sigma^j(k))$. Several randomly chosen trajectories are represented in color (until they reach the bound $370800$, above which our permutation is not computed). Two examples of periodic orbits $(19~ 44~ 110~ 39~   52)$ and $(528~ 1491~ 1429~ 2152~ 3969~ 1407)$ are enhanced with diamonds. The dotted line represents the mean exponential growth rate which is approximatively $0.28$. We notice that the randomly chosen orbits present a variety of growth rates but overall compatible with the mean one.}\label{fig-permutation}
\end{figure}



\end{document}